\documentclass[12pt,a4paper]{article}
\usepackage{amsmath,amssymb,graphicx,bbm,bm,delarray,pict2e,ntheorem,color,tikz}
\usepackage[normalem]{ulem}
\usepackage{braket}\usepackage{stmaryrd}
\usepackage{hyperref}
\usepackage[margin=1in]{geometry} 

\newtheorem{theorem}{Theorem}[section]
\newtheorem{prop}[theorem]{Proposition}
\newtheorem{lemma}[theorem]{Lemma}

\newtheorem{corollary}[theorem]{Corollary}

\theorembodyfont{\rmfamily}
\newtheorem{remark}[theorem]{Remark}
\newtheorem{example}[theorem]{Example}

\newtheorem{exc}[theorem]{Exercise}
\newtheorem{defn}[theorem]{Definition}

\newenvironment{proof}{\medskip\noindent{\bf Proof. }}{\hfill$\square$\medskip}
\newenvironment{proof*}[1]{\medskip\noindent{\bf Proof of #1.}}{\hfill$\square$\medskip}

\begin{document}

\addtolength{\baselineskip}{3pt}
\newcommand{\mm}{\mathfrak m}
\newcommand{\Per}{\mathrm{Per}}
\newcommand{\rP}{\mathrm{P}}
\newcommand{\di}{\mathsf{d}}
\def\essinf{\mathop{\mathrm{essinf}}}
\def\esssup{\mathop{\mathrm{esssup}}}
\def\A{\mathcal{A}}\def\B{\mathcal{B}}\def\CC{\mathcal{C}}
\def\DD{\mathcal{D}}\def\EE{\mathcal{E}}\def\FF{\mathcal{F}}
\def\GG{\mathcal{G}}\def\HH{\mathcal{H}}\def\II{\mathcal{I}}
\def\JJ{\mathcal{J}}\def\KK{\mathcal{K}}\def\LL{\mathcal{L}}
\def\MM{\mathcal{M}}\def\NN{\mathcal{N}}\def\OO{\mathcal{O}}
\def\PP{\mathcal{P}}\def\QQ{\mathcal{Q}}\def\RR{\mathcal{R}}
\def\SS{\mathcal{S}}\def\TT{\mathcal{T}}\def\UU{\mathcal{U}}
\def\VV{\mathcal{V}}\def\WW{\mathcal{W}}\def\XX{\mathcal{X}}
\def\YY{\mathcal{Y}}\def\ZZ{\mathcal{Z}}

\def\Ab{\mathbf{A}}\def\Bb{\mathbf{B}}\def\Cb{\mathbf{C}}
\def\Db{\mathbf{D}}\def\Eb{\mathbf{E}}\def\Fb{\mathbf{F}}
\def\Gb{\mathbf{G}}\def\Hb{\mathbf{H}}\def\Ib{\mathbf{I}}
\def\Jb{\mathbf{J}}\def\Kb{\mathbf{K}}\def\Lb{\mathbf{L}}
\def\Mb{\mathbf{M}}\def\Nb{\mathbf{N}}\def\Ob{\mathbf{O}}
\def\Pb{\mathbf{P}}\def\Qb{\mathbf{Q}}\def\Rb{\mathbf{R}}
\def\Sb{\mathbf{S}}\def\Tb{\mathbf{T}}\def\Ub{\mathbf{U}}
\def\Vb{\mathbf{V}}\def\Wb{\mathbf{W}}\def\Xb{\mathbf{X}}
\def\Yb{\mathbf{Y}}\def\Zb{\mathbf{Z}}

\def\ab{\mathbf{a}}\def\bb{\mathbf{b}}\def\cb{\mathbf{c}}
\def\db{\mathbf{d}}\def\eb{\mathbf{e}}\def\fb{\mathbf{f}}
\def\gb{\mathbf{g}}\def\hb{\mathbf{h}}\def\ib{\mathbf{i}}
\def\jb{\mathbf{j}}\def\kb{\mathbf{k}}\def\lb{\mathbf{l}}
\def\mb{\mathbf{m}}\def\nb{\mathbf{n}}\def\ob{\mathbf{o}}
\def\pb{\mathbf{p}}\def\qb{\mathbf{q}}\def\rb{\mathbf{r}}
\def\sb{\mathbf{s}}\def\tb{\mathbf{t}}\def\ub{\mathbf{u}}
\def\vb{\mathbf{v}}\def\wb{\mathbf{w}}\def\xb{\mathbf{x}}
\def\yb{\mathbf{y}}\def\zb{\mathbf{z}}

\def\Abb{\mathbb{A}}\def\Bbb{\mathbb{B}}\def\Cbb{\mathbb{C}}
\def\Dbb{\mathbb{D}}\def\Ebb{\mathbb{E}}\def\Fbb{\mathbb{F}}
\def\Gbb{\mathbb{G}}\def\Hbb{\mathbb{H}}\def\Ibb{\mathbb{I}}
\def\Jbb{\mathbb{J}}\def\Kbb{\mathbb{K}}\def\Lbb{\mathbb{L}}
\def\Mbb{\mathbb{M}}\def\Nbb{\mathbb{N}}\def\Obb{\mathbb{O}}
\def\Pbb{\mathbb{P}}\def\Qbb{\mathbb{Q}}\def\Rbb{\mathbb{R}}
\def\Sbb{\mathbb{S}}\def\Tbb{\mathbb{T}}\def\Ubb{\mathbb{U}}
\def\Vbb{\mathbb{V}}\def\Wbb{\mathbb{W}}\def\Xbb{\mathbb{X}}
\def\Ybb{\mathbb{Y}}\def\Zbb{\mathbb{Z}}

\def\Af{\mathfrak{A}}\def\Bf{\mathfrak{B}}\def\Cf{\mathfrak{C}}
\def\Df{\mathfrak{D}}\def\Ef{\mathfrak{E}}\def\Ff{\mathfrak{F}}
\def\Gf{\mathfrak{G}}\def\Hf{\mathfrak{H}}\def\If{\mathfrak{I}}
\def\Jf{\mathfrak{J}}\def\Kf{\mathfrak{K}}\def\Lf{\mathfrak{L}}
\def\Mf{\mathfrak{M}}\def\Nf{\mathfrak{N}}\def\Of{\mathfrak{O}}
\def\Pf{\mathfrak{P}}\def\Qf{\mathfrak{Q}}\def\Rf{\mathfrak{R}}
\def\Sf{\mathfrak{S}}\def\Tf{\mathfrak{T}}\def\Uf{\mathfrak{U}}
\def\Vf{\mathfrak{V}}\def\Wf{\mathfrak{W}}\def\Xf{\mathfrak{X}}
\def\Yf{\mathfrak{Y}}\def\Zf{\mathfrak{Z}}

\def\alphab{{\boldsymbol\alpha}}\def\betab{{\boldsymbol\beta}}
\def\gammab{{\boldsymbol\gamma}}\def\deltab{{\boldsymbol\delta}}
\def\etab{{\boldsymbol\eta}}\def\zetab{{\boldsymbol\zeta}}
\def\kappab{{\boldsymbol\kappa}}
\def\lambdab{{\boldsymbol\lambda}}\def\mub{{\boldsymbol\mu}}
\def\nub{{\boldsymbol\nu}}\def\pib{{\boldsymbol\pi}}
\def\rhob{{\boldsymbol\rho}}\def\sigmab{{\boldsymbol\sigma}}
\def\taub{{\boldsymbol\tau}}\def\epsb{{\boldsymbol\varepsilon}}
\def\epsilonb{{\boldsymbol\epsilon}} \def\inb{{\boldsymbol\in}}
\def\phib{{\boldsymbol\varphi}}\def\psib{{\boldsymbol\psi}}
\def\xib{{\boldsymbol\xi}}\def\omegab{{\boldsymbol\omega}}
\def\intl{\int\limits}
\def\sqprod{\mathbin{\square}}

\def\ybb{\mathbbm{y}}
\def\one{{\mathbbm1}}
\def\two{{\mathbbm2}}
\def\R{\Rbb}\def\Q{\Qbb}\def\Z{\Zbb}\def\N{\Nbb}\def\C{\Cbb}
\def\wh{\widehat}
\def\wt{\widetilde}
\def\var{\omega}
\def\eps{\varepsilon}
\def\sgn{{\rm sign}}
\def\dd{{\sf d}}
\def\Rv{\overleftarrow}
\def\Pr{{\sf P}}
\def\E{{\sf E}}
\def\T{^{\sf T}}
\def\proofend{\hfill$\square$}
\def\id{\hbox{\rm id}}
\def\conv{\hbox{\rm conc}}
\def\lin{\hbox{\rm lib}}
\def\conv{\hbox{\rm conc}}
\def\Dim{\hbox{\rm Dim}}
\def\const{\hbox{\rm const}}
\def\vol{\text{\rm vol}}
\def\diam{\text{\rm diam}}
\def\corank{\hbox{\rm cork}}
\def\cork{\hbox{\rm cork}}
\def\OR{\mathcal{OR}}
\def\GOR{\mathcal{GOB}}
\def\NOR{\mathcal{NOR}}
\def\LGOR{\mathcal{ALGOR}}
\def\cro{\text{\rm CR}}
\def\supp{\text{\rm sup}}
\def\grad{\text{\rm grad}}
\def\rk{\overline{\rho}}
\def\srk{\hbox{\rm src}}
\def\diag{{\rm diag}}
\def\pw{{\sf w}_\text{\rm prod}}
\def\tw{{\sf w}_\text{\rm tree}}
\def\aw{{\sf w}_\text{\rm alb}}
\def\bw{{\sf bow}}
\def\ld{{\sf d}_{\rm loc}}
\def\hd{{\sf d}_{\rm har}}
\def\tv{\text{\rm tv}}
\def\Tr{\hbox{\rm Tar}}
\def\tr{\hbox{\rm tr}}
\def\Prob{\hbox{\rm Pr}}
\def\bl{\text{{\rm bl}}}
\def\abl{\hbox{{\rm abl}}}
\def\Id{\hbox{\rm Id}}
\def\aff{\text{\rm ra}}
\def\MC{\CC_{\max}}
\def\Inf{\text{\sf Inf}}
\def\Str{\text{\sf Str}}
\def\Rig{\text{\sf Rig}}
\def\Mat{\text{\sf Mat}}
\def\comm{{\sf comm}}
\def\maxcut{{\sf maxcut}}
\def\disc{\text{\sf disc}}
\def\cond{\Phi}
\def\val{\text{\sf val}}
\def\dist{d_{\rm qu}}
\def\dhaus{d_{\rm haus}}
\def\dlp{d_{\rm LP}}
\def\dact{d_{\rm act}}
\def\Ker{{\rm Ker}}
\def\Rng{{\rm Rng}}
\def\gdim{{\rm gdim}}
\def\gap{\text{\rm gap}}
\def\intl{\int\limits}
\def\et{\qquad\text{and}\qquad}
\def\fin{\text{\sf fin}}
\def\Bd{{\sf Bd}}
\def\ba{{\sf ba}}
\def\ca{{\sf ca}}
\def\matp{{\sf mm}}
\def\blim{{\rm blim}}
\def\basp{{\sf bmm}}
\def\sep{{\sf sep}}
\def\Hom{{\rm Hom}}
\def\bp{{\sf bp}}
\def\fg{\varphi}
\def\fgx{{\varphi}^\sqcap}
\def\lc{\bullet}
\def\li{{\rm li}}
\def\ui{{\rm ui}}
\def\ls{{\rm ls}}
\def\us{{\rm us}}
\long\def\ignore#1{}

\def\QR{{R^{cc}}}

\def\url{}
\title{Choquet-type extension theory of set-pair functions, and applications to graph limits, hypergraphs, Riemannian manifolds and metric measure spaces}
\author{Dong Zhang\thanks{School of Mathematical Sciences,  Peking University,   100871 Beijing, China. {\tt dongzhang@math.pku.edu.cn}}}
\date{August 10, 2026}
\maketitle


\allowdisplaybreaks[4]

\vspace{0.2cm}

\normalsize

\begin{abstract}
We propose Choquet extension for set-pair functions, $L^p$ integration of Choquet extensions, and global extension constants, and apply these to investigate optimization problems and bound many combinatorial and geometric quantities. 
Our research line is also applicable to the study of original Choquet extension; within this framework, parallel results for the original version are obtained. 
Specifically, we use Choquet-type extensions to build an equivalent functional representation of set-based fractional optimization, which finds applications in various settings, such as  maxcut, bipartiteness ratio, and conductance on graph limits or Riemannian manifolds. 
We further establish the $L^p$ integration of a family of Choquet extensions, and apply it to derive spectral bounds for conductance and other combinatorial quantities on measure spaces.  
A monotonicity inequality on global extension constants is proposed, which unifies classical estimates and uncovers new bounds for a lot of geometric and combinatorial quantities, such as torsional rigidity,  Cheeger constants, Dirichlet $p$-isoperimetric constant, and $p$-Laplacian eigenvalues, in totally distinct underlying structures----including  hypergraphs, graph limits, Riemannian manifolds and metric measure spaces. 
\end{abstract}

\tableofcontents

\vspace{0.3cm}
\section{Introduction}
The concept of Choquet extension was introduced by Gustave Choquet in his landmark 1953 paper \cite{Choquet} that laid rigorous foundations for the field of potential theory in the spirit of Bourbaki, making extensive use of set theoretic, measure theoretic and topological methods. 
This was independently proposed by L\'aszl\'o Lov\'asz \cite{Lovasz83} in the context of combinatorial optimization, leading to a profound and useful framework on submodular analysis, and continues to attract widespread attention to this day \cite{Huber14,Lovasz25}. 
Very recently, the  Choquet extension is applied to 
discrete Morse theory \cite{Jost/Zhang24a}, which plays a role of a bridge between discrete problems and continuous methods. 

In this paper, we propose Choquet extension for  set-pair functions, $L^p$ integration of Choquet extensions, and global extension constants, and apply these to investigate optimization problems and bound many combinatorial and geometric quantities. 
Our research line is also new for the study of original Choquet extension, and thus we obtain parallel results for the original version. 
Thanks to the extended framework for bisubmodular and more general set-pair functions that we have proposed, we can incorporate certain quantities involving hypergraphs, graph limits,  Riemannian manifolds, and metric measure spaces into the same framework for a unified study. This has the following implications: 
\begin{enumerate}
\item We propose a machinery  based on Choquet-type extensions, and this gives a way to equivalently transform set-based optimization to functional-based optimization in closed form, which facilitates the subsequent application of the variational method. 
The functional point of view enables many tools to be implemented, hence its importance. Such equivalent transformations make many problems transparent and help us understand a wide range of concepts, such as maxcut, bipartiteness ratio, frustration index, etc; see Theorem \ref{thm:tilde-fg-equal} and its applications in Section \ref{sec:frac}.
\item 
We establish the $L^p$ integration of a family of Choquet-type extensions and, based on this, we propose a monotonicity inequality that applies not only to many quantities, such as conductance and torsional rigidity, but also to many distinct types of underlying structures, including graph limits, hypergraphs, Riemannian manifolds and metric measure spaces.  
This monotonicity inequality naturally gives rise to many useful inequalities; we may therefore regard it as a generator of many important inequalities. See Theorems \ref{thm:critical-inequality} and \ref{thm:critical-inequality2} and their applications in Sections \ref{sec:Lp-integration} and \ref{sec:Applications}. 
\item A significant difference from previous work in literature \cite{Choquet,Lovasz83,Lovasz25,Jost/Zhang24a} is that we propose $L^p$ integration of a family of Choquet integrals and the corresponding global extension constants (Definitions \ref{def:Phi-integration}, \ref{def:Psi-integration} and \ref{def:global-extension-cons}). This unified framework allows us to apply the extension methods to estimate many combinatorial and geometric quantities.
\end{enumerate}

When reading this paper, please pay attention to the subtle differences in notation, as this paper deals with different versions of the Choquet-type extensions and various forms of the $L^p$ integrations of them. 

\section{Choquet-type Extensions
}\label{sec:disjoint-pair-extension}


{\normalsize Let $(J,\B)$ be  a sigma-algebra. Let $\varphi:\B\to \R$ be a set-function such that for any bounded measurable function $f:J\to\R$, the function $t\mapsto \varphi\{f\ge t\}$ is Lebesgue integrable on the interval $[\inf (f),\infty)$, where $\{f\ge t\}$  is shorthand for the upper  level set $\{x\in J:f(x)\ge t\}$, and $\inf (f):=\inf_{x\in J}f(x)$ denotes the infimum of $f$. 
The following integral 
\begin{equation}\label{eq:choquet}
\widehat{\varphi}(f):=\int_{\inf (f)}^\infty \varphi\{f\ge t\}dt+\varphi(J)\cdot\inf (f)    
\end{equation}
\noindent is called the \emph{Choquet extension} of $\varphi$ at $f$ (alternatively, it may be referred to as the \emph{Choquet integral} of $f$ with respect to $\varphi$, where $\varphi$ plays the role of a signed measure on $\B$). 
It is clear that $\widehat{\varphi}(1_A)=\varphi(A)$ for any $A\in\B$, where $1_A$ denotes the indicator function of $A$, i.e., $1_A(x)=1$, $\forall x\in A$, and $1_A(x)=0$, $\forall x\not\in A$. 
According to the results in \cite{Lovasz23,Lovasz25}, for a setfunction $\varphi$ with bounded variation and $\varphi(\emptyset)=0$,  its Choquet integral $\widehat{\varphi}$ is well-defined by the integral formula \eqref{eq:choquet}.} 

The first nontrivial example (as shown in Section \ref{subsec:first-example}) is related to boundary measure functions, which is similar to the cut-capacity function in the Max-Flow-Min-Cut theorem \cite{Lovasz21}. For a measure $\eta$ and a boundary operator $\partial$, the function defined as $\varphi(S):=\eta(\partial S)$ is a non-monotone submodular function in many common examples. And the Choquet extension $\widehat{\varphi}$ of a boundary measure function $\varphi:=\eta\circ \partial$ is usually a total variation functional; see Example 
\ref{example:conductance-graphon} for details.

\subsection{Disjoint-pair Choquet extension}

Before going into details of the disjoint-pair Choquet extension, we present a motivation for the study.  
As a useful generalization of matroids, the oriented matroid finds its applications in hyperplane arrangements, zonotope, Coxeter groups and many others \cite{orientedmatroid}. 
The rank function of oriented matroids is not submodular but bisubmodular \cite{Chandrasekaran}. 
This motivates us to investigate a generalized Choquet integral for bisubmodular functions, and it would be also useful to extend some parameters and certain results on (signed) graphs to signed submeasure spaces. %
This research idea stems primarily from research of submodular setfunctions—which can be traced back to the work of Choquet and Lov\'asz—as well as research into oriented matroids (or $\delta$-matroids),  bisubmodularity, and so on. 
Then, approaches in measure theory and functional analysis would be useful to investigate appropriate limit objects of some typical combinatorial structures \cite{BCLSV}. 
The goal of this paper is to derive meaningful identities and inequalities on bisubmodular and more general setpair functions. 
Below we propose a disjoint set-pair analog of Choquet integral. 
\begin{defn}[disjoint-pair Choquet extension]
Let 
$\psi:\B\times\B\to\R$ be a set-pair function such that for any bounded measurable function $f:J\to\R$, the function $t\mapsto \psi(\{f\ge t\},\{f\le-t\})$ is Lebesgue integrable on the interval $(0,\infty)$, where  $\{f\le-t\}$ is the lower level set of $f$ below $-t$. 
We call the following integral 
\begin{equation}\label{eq:choquet-pair}
\widehat{\psi}(f):=\int_0^\infty \psi(\{f\ge t\},\{f\le-t\})dt 
\end{equation}
the \emph{disjoint-pair Choquet extension} of $\psi$ at $f$ (alternatively, $\widehat{\psi}(f)$ is also referred to the \emph{disjoint-pair Choquet integral} of $f$ with respect to $\psi$). 
In the sequel, for convenience, we simply write $\psi(f\ge t,f\le -t)$ instead of $\psi(\{f\ge t\},\{f\le-t\})$.
\end{defn}

In many cases, the Choquet integral \eqref{eq:choquet} and its disjoint-pair analog \eqref{eq:choquet-pair} have useful relations. For example, given $\varphi$, if $J$ is a finite set, and if we take $\psi$ defined as  $\psi(A,A')=\varphi(A)+\varphi(J\setminus A')-\varphi(J)$ for any disjoint subsets $A$ and $A'$, then $\widehat{\psi}=\widehat{\varphi}$; see Proposition \ref{pro:setpair-generalize-original} for a slightly more general version
. In addition, if $\psi$ is a setpair function with bounded variation, then {\bf$\psi$ is bisubmodular\footnote{We note here that bisubmodular functions arise in a question of Lov\'asz \cite{Lovasz83} and in the context of delta-matroids \cite{Bouchet}. } if and only if $\widehat{\psi}$ is convex} (see Theorem \ref{thm:bisubmodular}). 

On the other hand, we observe that \eqref{eq:choquet-pair} provides a signed version of the Choquet integral, and it applies to certain optimization problems on \emph{signed graphons}\footnote{The signed graphons, introduced in \cite{Lovasz-signed}, can be viewed as limits of signed graphs.}.  
This means, in some sense, the disjoint-pair Choquet integral  \eqref{eq:choquet-pair} enlarges the scope of the original one, which is useful for  
many problems on  signed graph limits and delta matroid limits. 

By formula \eqref{eq:choquet-pair}, to compute the disjoint-pair Choquet extension of a set-pair function $\psi$, we only use the $\psi$-value of setpairs $(B,B')\in \B\times \B$ with $B\cap B'=\varnothing$. Therefore, we introduce the sub-family $$\B_2:=\{(B,B')\in \B\times \B:B\cap B'=\varnothing\}$$ of $\B\times \B$. In fact, to define the Choquet-type extension $\widehat{\psi}$ via \eqref{eq:choquet-pair}, it suffices to have $\psi:\B_2\to\R$; and it is clear that $\widehat{\psi}(1_B-1_{B'})=\psi(B,B')$ for any $(B,B')\in\B_2$. For convenience, we use $\bm{\varPsi}(\B_2)$ to denote the collection of all $\psi:\B_2\to\R$ such that for any bounded measurable function $f:J\to\R$, the function $t\mapsto \psi(\{f\ge t\},\{f\le-t\})$ is Lebesgue integrable on the open interval $(0,\infty)$. 

\begin{lemma}
Given $\psi\in \bm{\varPsi}(\B_2)$, we have $\psi(\emptyset,\emptyset)=0$. Moreover, the map $\psi\mapsto \widehat{\psi}$ is linear, 
i.e., $\widehat{c\psi}=c\widehat{\psi}$, $\forall c\in\R$, $\forall \psi$;  and $\widehat{\psi_1+\psi_2}=\widehat{\psi_1}+\widehat{\psi_2}$, $\forall \psi_1,\psi_2$.
\end{lemma}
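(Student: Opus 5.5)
The first claim will follow by testing the integrability hypothesis on the zero function, and linearity will follow directly from linearity of the Lebesgue integral.

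For $\psi(\emptyset,\emptyset)=0$, I take $f\equiv 0$, which is bounded and measurable. For every $t>0$ we have $\{f\ge t\}=\emptyset$ and $\{f\le -t\}=\emptyset$. The integrand $t\mapsto\psi(f\ge t,f\le -t)$ is therefore the constant $\psi(\emptyset,\emptyset)$ on $(0,\infty)$. Since $\psi\in\bm{\varPsi}(\B_2)$, this constant function must be Lebesgue integrable on $(0,\infty)$. A constant $c$ satisfies $\int_0^\infty |c|\,dt<\infty$ only when $c=0$, so $\psi(\emptyset,\emptyset)=0$.

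For linearity, I first check that $\bm{\varPsi}(\B_2)$ is closed under scalar multiples and sums. Fix a bounded measurable $f$. The integrand for $c\psi$ is $c$ times the integrand for $\psi$. The integrand for $\psi_1+\psi_2$ is the sum of the two individual integrands, because the level-set pair $(\{f\ge t\},\{f\le -t\})$ depends only on $f$ and $t$, not on $\psi$. Scalar multiples and sums of integrable functions are integrable, so $c\psi$ and $\psi_1+\psi_2$ lie in $\bm{\varPsi}(\B_2)$. Applying linearity of the Lebesgue integral over $(0,\infty)$ to the defining formula \eqref{eq:choquet-pair} then gives
\[
\widehat{c\psi}(f)=c\,\widehat{\psi}(f),\qquad \widehat{\psi_1+\psi_2}(f)=\widehat{\psi_1}(f)+\widehat{\psi_2}(f)
\]
for every bounded measurable $f$.

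There is no real obstacle here. The only point needing care is that the membership $c\psi,\ \psi_1+\psi_2\in\bm{\varPsi}(\B_2)$ must be verified before the integrals can be split.
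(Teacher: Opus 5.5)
Your proof is correct: testing the integrability hypothesis on $f\equiv 0$ (or on any $f\in\Bd$, since both level sets are empty for $t>\|f\|_\infty$) forces the constant $\psi(\emptyset,\emptyset)$ to vanish. Linearity follows because the level-set pair does not depend on $\psi$, together with your check that $\bm{\varPsi}(\B_2)$ is closed under sums and scalar multiples. The paper states this lemma without proof, treating it as immediate, and your argument is exactly the evident one it leaves implicit.
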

With the aid of the following definition, we can specify a sufficient condition for a function to belong to $\bm{\varPsi}(\B_2)$.
\begin{defn}
We say that a setpair function $\psi:\B_2\to\R$ has \emph{bounded variation} if there exists a constant $K\in\R$ such that 
$$\sum_{i=1}^n|\psi(X_i,Y_i)-\psi(X_{i-1},Y_{i-1})|\le K$$
for every chain of subset-pairs $(\emptyset,\emptyset)=(X_0,Y_0)\prec (X_1,Y_1)\prec\cdots\prec (X_n,Y_n)$. Here $(X_{i-1},Y_{i-1})\prec (X_i,Y_i)$ means that $X_{i-1}\subset X_i$ and $Y_{i-1}\subset Y_i$. 
\end{defn}

\begin{lemma}
\label{lem:BV-is-Psi}
Let $\psi:\B_2\to\R$ be a setpair function with bounded variation and with $\psi(\emptyset,\emptyset)=0$. 
Then, $\psi\in\bm{\varPsi}(\B_2)$ and the following properties hold:
\begin{enumerate}
\item[(a)] The functional $\widehat{\psi}:f\mapsto \widehat{\psi}(f)$ is positive homogeneous: if $f\in\Bd$ and $c>0$, then $\widehat{\psi}(cf)=c\widehat{\psi}(f)$, where $\Bd$ denotes the set of bounded measurable functions $f:J\to\R$.
\item[(b)] $\widehat{\psi}$ is Lipschitz continuous, i.e., there exists a constant $C>0$ depending on $\psi$ such that 
$|\widehat{\psi}(f)-\widehat{\psi}(g)|\le C\|f-g\|$ for any $f,g\in\Bd$, where $\|f-g\|:=\sup\limits_{x\in J}|f(x)-g(x)|$.
\end{enumerate}
\end{lemma}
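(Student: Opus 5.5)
The plan is to exploit the fact that, for a bounded measurable $f$ with $M:=\|f\|$, the map $t\mapsto (\{f\ge t\},\{f\le -t\})$ is \emph{decreasing} along the order $\prec$ as $t$ increases on $(0,\infty)$. It equals $(\emptyset,\emptyset)$ for $t>M$, and for every $t>0$ the two level sets are disjoint, so the pair lies in $\B_2$. Hence $g(t):=\psi(f\ge t,f\le -t)$ is a function of $t$ along a chain in $\B_2$ that ends at $(\emptyset,\emptyset)$.

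\textbf{Integrability and $\psi\in\bm{\varPsi}(\B_2)$.} Given any points $0<t_1<\dots<t_n$, the pairs at $t_n,\dots,t_1$, preceded by $(\emptyset,\emptyset)$ (take $t_{n+1}>M$), form a chain starting at $(\emptyset,\emptyset)$. The bounded variation hypothesis therefore gives $\sum|g(t_i)-g(t_{i+1})|\le K$. So $g$ has total variation at most $K$ on $(0,\infty)$, and $|g(t)|=|g(t)-\psi(\emptyset,\emptyset)|\le K$. Since $g$ has bounded variation, it is a difference of two monotone functions and hence Borel measurable. It is bounded and vanishes for $t>M$, so it is Lebesgue integrable on $(0,\infty)$, with $|\widehat\psi(f)|\le KM$.

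\textbf{Part (a).} The substitution $t=cs$ gives $\{cf\ge cs\}=\{f\ge s\}$ and $\{cf\le -cs\}=\{f\le -s\}$, so $\widehat\psi(cf)=\int_0^\infty g(t/c)\,dt=c\int_0^\infty g(s)\,ds$.

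\textbf{Part (b).} This is the main obstacle, because the level-set pairs of $f$ and $h$ are not comparable along a single chain. The plan is to pass through an intermediate function. Set $\delta=\|f-h\|$. Then $\{f\ge t+\delta\}\subset\{h\ge t\}\subset\{f\ge t-\delta\}$, and similarly for the lower sets. Two cases arise.
\begin{itemize}
\item First treat the \emph{comparable} case where $f$ and $h$ have the same signs pointwise and $|f|\le|h|$. Then for every $t$ the pair for $f$ precedes the pair for $h$, and the pairs $P_f(t)\prec P_h(t)$ sit at interleaved positions.
\item For general $f,h$, compare each to $u:=\mathrm{sign}$-respecting truncations. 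For instance, define $u$ by $u=f$ where $f,h$ have the same sign, with $|u|=\min(|f|,|h|)$, and $u=0$ otherwise. Then $u$ is comparable to both $f$ and $h$, and $\|f-u\|,\|h-u\|\le\delta$.
\end{itemize}
In the comparable case $|f|\le|h|$ with $\|f-h\|\le\delta$, I would write
\[
\widehat\psi(h)-\widehat\psi(f)=\int_0^\infty \bigl(\psi(P_h(t))-\psi(P_f(t))\bigr)dt .
\]
The difficulty is that $\int_0^\infty$ of these jumps must be bounded by $C\delta$. The key step is a discretization. Split $(0,\infty)$ into intervals of length $\delta$, i.e.\ $[k\delta,(k+1)\delta)$ with $k\ge 0$. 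Since $P_f(t)\prec P_h(t)\prec P_f(t-\delta)$, averaging over a shift $s\in[0,\delta)$ lets us write the integral as $\int_0^\delta\sum_k(\psi(P_h(s+k\delta))-\psi(P_f(s+k\delta)))\,ds$. For fixed $s$, the pairs $P_f(s+k\delta)\prec P_h(s+k\delta)\prec P_f(s+(k-1)\delta)\prec\cdots$ form a single chain ending at $(\emptyset,\emptyset)$. The sum of absolute increments along that chain is therefore at most $K$, so the inner sum is at most $K$ in absolute value.

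There is a boundary term from the interval near $t=0$, where $P_f(s-\delta)$ no longer makes sense. It is handled by extending the chain with $P_h(s)$ as the top element, which costs nothing extra. This yields $|\widehat\psi(h)-\widehat\psi(f)|\le K\delta$, and combining the two comparisons through $u$ gives (b) with $C=2K$. The delicate point to verify carefully is the ordering $P_h(t)\prec P_f(t-\delta)$ in the comparable case, which uses both the sign agreement and $|h|\le|f|+\delta$.
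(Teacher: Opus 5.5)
Your proof is correct and complete. The paper states this lemma without proof; it only refers to Lov\'asz for the setfunction analogue. So there is no argument of the paper's to compare against, and your proposal fills that gap.

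The integrability part is right. For $t>0$ the pairs $P_f(t)=(\{f\ge t\},\{f\le -t\})$ lie in $\B_2$ and decrease along $\prec$ to $(\emptyset,\emptyset)$. Hence $t\mapsto \psi(P_f(t))$ has variation at most $K$, is bounded by $K$, and vanishes beyond $\|f\|$. Part (a) is the obvious substitution.

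For (b), with $s\in(0,\delta)$ your interleaved chain
\[
(\emptyset,\emptyset)=P_f(s+N\delta)\prec P_h(s+N\delta)\prec P_f(s+(N-1)\delta)\prec\cdots\prec P_f(s)\prec P_h(s)
\]
gives $|\widehat\psi(h)-\widehat\psi(f)|\le K\delta$ in the comparable case. The auxiliary $u$ then yields $C=2K$.

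Two wording fixes are needed:
\begin{itemize}
\item The comparable case should be stated as $P_f(t)\prec P_h(t)$ for all $t>0$, i.e.\ $fh\ge 0$ and $|f|\le |h|$ pointwise. Your $u$ vanishes where $f$ and $h$ disagree in sign, so it does not literally have the same sign as $f$, but it does satisfy this weaker condition with respect to both $f$ and $h$.
\item The relation $P_h(t)\prec P_f(t-\delta)$ uses only $\|f-h\|\le\delta$ and $t>\delta$, not sign agreement. Sign agreement is what gives $P_f(t)\prec P_h(t)$.
\end{itemize}

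An alternative route is a Jordan-type decomposition $\psi=V-(V-\psi)$. Here $V(A,A')$ is the supremal chain variation from $(\emptyset,\emptyset)$ up to $(A,A')$. Both $V$ and $V-\psi$ are $\prec$-increasing, vanish at $(\emptyset,\emptyset)$, and are bounded by $K$ and $2K$ respectively. For such an increasing $\alpha$, the single inclusion $P_h(t)\prec P_f(t-\delta)$ gives $\widehat\alpha(h)\le \widehat\alpha(f)+\delta\sup\alpha$ directly, with no intermediate function needed, but only the constant $3K$. Your direct interleaving avoids the decomposition and gives the sharper constant $2K$.
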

\subsection{Bisubmodularity and Convexity}

Unlike submodularity which has both analytic and combinatorial lines of research, bisubmodularity only has the combinatorial line which goes back to Bouchet \cite{Bouchet}, Fujishige \cite{Fujishige}, Qi \cite{Qi88}. 
Historically, the absence of an analytic line is regrettable. 
Therefore, we generalize the concept of bisubmodularity from finite case to sigma-algebra case, not only for completing the analytic line, but also for future possibilities on graphons and limits of oriented matroids.

\begin{defn}[Bisubmodularity]
A set-pair function $\psi:\B\times\B\to\R$ (or $\psi:\B_2\to \R$) is \emph{bisubmodular} if for any  $(A,A'),(B,B')\in \B_2$,
\begin{equation}\label{eq:2-submodular}
\psi(A,A')+\psi(B,B')\ge \psi((A\cup B)\setminus (A'\cup B'),(A'\cup B')\setminus(A\cup B))+\psi(A\cap B,A'\cap B').
\end{equation}
\end{defn}
We also use the notions $\wedge$ and $\vee$ to express the intersection and union on $\B_2$, respectively.  They are defined as follows:  $(A,A')\wedge (B,B'):=(A\cap B,A'\cap B')$, and $(A,A')\vee (B,B'):=((A\cup B)\setminus (A'\cup B'),(A'\cup B')\setminus(A\cup B))$. Then the bisubmodularity \eqref{eq:2-submodular} can be rewritten as 
\begin{equation}
\label{eq:bisub2}
\psi(A,A')+\psi(B,B')\ge \psi ((A,A')\vee (B,B'))+\psi ((A,A')\wedge (B,B')).     
\end{equation}

Examples of bisubmodular functions include rank functions of  oriented matroids, bipartiteness functions on graphs, etc. 
Here, for simplicity, we give an easy bisubmodular function $\psi$ involving a graph $(V,E)$, defined via $\psi(S,S')=2\#E(S)+2\#E(S')+\#E(S\cup S',V\setminus(S\cup S'))$ in which $\#E(S)$ counts the number of edges in the subgraphs induced by $S$, and $\#E(S\cup S',V\setminus(S\cup S'))$ denotes the number of edges with exactly one vertex in $S\cup S'$. Then the setpair Choquet extension of $\psi$ is $\widehat{\psi}(f)=\sum_{\{u,v\}\in E}|f(u)+f(v)|$. 

\begin{defn}[Submodularity in components]
A set-pair function $\psi:\B\times\B\to\R$ is \emph{submodular in components} if 
$B\mapsto \psi(B,B')$ is a  submodular function of $B$; and 
$B'\mapsto \psi(B,B')$ is a  submodular function of $B'$. 
\end{defn}
It should be noted that submodularity in components does not imply bisubmodularity. 
We provide an example involving the homomorphism density:

\begin{example}\footnote{The author thanks Professor Lov\'asz for the interesting discussion and for raising the question of whether the homomorphism density is bisubmodular.}
For two simple graphs $F$ and $G$, it is known that the homomorphism density $t(F,G)$ is supermodular both as a function of the edge set of $F$ and as a function of the edge set of $G$ (see \cite{BCLSV06}). Suppose $\mathrm{v}(F)=\mathrm{v}(G)=n$, and let  $J={[n]\choose 2}
$ be all possible labeled edges on $n$ vertices. Let $\B$ be the power set of $J$. Then the homomorphism density $t(\cdot,\cdot)$ is 
a function $t:\B\times \B\to [0,1]$ satisfying certain properties. 
Graphs $F$ and $G$ can be viewed as two elements of $\B$. 
From the above discussion, $\psi(F,G):=1-t(F,G)$ defines a submodular function of $F\in\B$ for fixed $G$, and a submodular function of $G$ for fixed $F$. 
However, $\psi:=1-t$ is not bisubmodular. In fact, take two nonempty subsets $F,G\in\B$ with $F\cap G=\varnothing$.  
Then for such $(F,G)$ and $(G,F)$, we have $(F,G)\wedge(G,F)=(\varnothing,\varnothing)$ and $(F,G)\vee(G,F)=(\varnothing,\varnothing)$, and thus  $t(F,G)+t(G,F)>0=t((F,G)\wedge(G,F))+t((F,G)\vee(G,F))$, meaning that $\psi:=1-t$ does not satisfy \eqref{eq:bisub2}.

\end{example}

We further develop the following theorems characterizing the relation among bisubmodularity, bounded variation, and convexity.

\begin{theorem}
Let $\psi:\B_2\to\R$ be a bounded bisubmodular function. Then $\psi$ has  bounded variation.
\end{theorem}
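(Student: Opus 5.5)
The plan is to mimic the classical argument that a bounded submodular set function has bounded variation. Along a chain we bound the sum of the positive increments by a telescoping sum over a specially built auxiliary chain. The negative increments are then controlled by the total change $\psi(X_n,Y_n)-\psi(X_0,Y_0)$.

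\textbf{Step 1 (diminishing returns for bisubmodular functions).} Let $(A,A')\prec(B,B')$ in $\B_2$, and let $(D,D')$ be a pair with $D\cap B=\varnothing$, $D'\cap B'=\varnothing$ and $(B\cup D,B'\cup D')\in\B_2$. Then
$$\psi(A\cup D,A'\cup D')-\psi(A,A')\ \ge\ \psi(B\cup D,B'\cup D')-\psi(B,B').$$
To see this, apply \eqref{eq:bisub2} to $(A\cup D,A'\cup D')$ and $(B,B')$. Both pairs lie in $\B_2$, since $(A\cup D,A'\cup D')\prec(B\cup D,B'\cup D')$.
\begin{itemize}
\item The meet is $(A\cap B\cup D\cap B,\ A'\cap B'\cup D'\cap B')=(A,A')$, because $D\cap B=D'\cap B'=\varnothing$ and $A\subset B$, $A'\subset B'$.
\item For the join, $A\cup D\cup B=B\cup D$ and $A'\cup D'\cup B'=B'\cup D'$. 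These two sets are disjoint, so the set differences change nothing and the join is $(B\cup D,B'\cup D')$.
\end{itemize}
Rearranging the inequality gives the claim.

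\textbf{Step 2 (bounding positive increments).} Let $|\psi|\le M$, and take a chain $(\emptyset,\emptyset)=(X_0,Y_0)\prec\cdots\prec(X_n,Y_n)$ in $\B_2$. Write $d_i=\psi(X_i,Y_i)-\psi(X_{i-1},Y_{i-1})$ and $(D_i,D_i')=(X_i\setminus X_{i-1},Y_i\setminus Y_{i-1})$. Let $i_1<\dots<i_m$ be the indices with $d_i>0$. Define
$$Z_k=\bigcup_{j\le k}D_{i_j},\qquad Z_k'=\bigcup_{j\le k}D'_{i_j},\qquad Z_0=Z_0'=\varnothing.$$
These sets satisfy $(Z_{k-1},Z_{k-1}')\prec(X_{i_k-1},Y_{i_k-1})$ and $(Z_k,Z_k')\prec(X_{i_k},Y_{i_k})$, so all of them lie in $\B_2$. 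Apply Step 1 with $(A,A')=(Z_{k-1},Z'_{k-1})$, $(B,B')=(X_{i_k-1},Y_{i_k-1})$ and $(D,D')=(D_{i_k},D'_{i_k})$. This gives $\psi(Z_k,Z_k')-\psi(Z_{k-1},Z_{k-1}')\ge d_{i_k}$. Summing over $k$ and telescoping,
$$\sum_{d_i>0}d_i\le \psi(Z_m,Z_m')-\psi(\emptyset,\emptyset)\le 2M.$$

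\textbf{Step 3 (conclusion).} We have $\sum_i|d_i|=2\sum_{d_i>0}d_i-\sum_i d_i$, and $\sum_i d_i=\psi(X_n,Y_n)-\psi(\emptyset,\emptyset)\ge -2M$. Hence $\sum_i|d_i|\le 4M+2M=6M$, uniformly over all chains, so $\psi$ has bounded variation with $K=6M$.

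The only delicate point is Step 1. One has to verify that the bisubmodular join really collapses to $(B\cup D,B'\cup D')$, which uses the disjointness built into $\B_2$. After that, the argument is the same as in the submodular case.
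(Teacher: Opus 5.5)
Your proof is correct. Step 1 checks out: the meet collapses to $(A,A')$ because $D\cap B=D'\cap B'=\varnothing$, and the join collapses to $(B\cup D,B'\cup D')$ because that pair is already disjoint. In Step 2, the auxiliary pairs $(Z_k,Z_k')$ are dominated by $(X_{i_k},Y_{i_k})$, so they lie in $\B_2$, and the telescoping bound is right. The paper states this theorem without proof; its proof section only treats the convexity characterization, Theorem~\ref{thm:bisubmodular}. So there is no argument in the paper to compare against. Your argument is the natural transfer of the classical diminishing-returns proof that bounded submodular setfunctions have bounded variation.

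An equivalent and slightly more economical organization reduces directly to the submodular case. Every pair of the chain satisfies $(X_i,Y_i)\prec(X_n,Y_n)$, and on the ``orthant'' of $(X_n,Y_n)$ the bisubmodular join is just componentwise union. Hence $\varphi(S):=\psi(S\cap X_n,S\cap Y_n)$ is a submodular setfunction on $\B$ with $|\varphi|\le M$. Moreover, $S_i:=X_i\cup Y_i$ is an ordinary chain with $\varphi(S_i)=\psi(X_i,Y_i)$. Any variation bound for submodular functions that depends only on $\sup|\varphi|$ then applies verbatim. Your Step 1 is exactly this observation made explicit, since every pair you feed into \eqref{eq:bisub2} lies in the orthant of $(X_{i_k},Y_{i_k})$.

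Your Step 3 bookkeeping can also be sharpened: $\sum_i|d_i|=2\sum_{d_i>0}d_i-\sum_i d_i\le 2\psi(Z_m,Z_m')-\psi(\emptyset,\emptyset)-\psi(X_n,Y_n)\le 4M$. Of course $6M$ already suffices.
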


\begin{theorem}\label{thm:bisubmodular}Let $\psi:\B_2\to\R$ be a setpair function with bounded variation and with $\psi(\emptyset,\emptyset)=0$. Then 
$\psi:\B\times\B\to\R$ (or $\psi:\B_2\to \R$) is bisubmodular if and only if $\widehat{\psi}$ is convex.
\end{theorem}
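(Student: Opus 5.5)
We prove the two implications separately. Throughout we use the fact from Lemma \ref{lem:BV-is-Psi} that $\widehat{\psi}$ is well defined, positively homogeneous and Lipschitz on $\Bd$.

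For the direction ``convex $\Rightarrow$ bisubmodular'', take $(A,A'),(B,B')\in\B_2$ and set $f=1_A-1_{A'}$, $g=1_B-1_{B'}$. Consider $h=f+g$, which takes values in $\{-2,-1,0,1,2\}$. From \eqref{eq:choquet-pair} we get
$$\widehat{\psi}(h)=\psi(\{h\ge1\},\{h\le-1\})+\psi(\{h\ge2\},\{h\le-2\}).$$
We have $\{h\ge 2\}=A\cap B$ and $\{h\le -2\}=A'\cap B'$. The set $\{h\ge1\}$ consists of the points lying in $A\cup B$ but not in $A'\cup B'$: a point in $A\cap B'$ gives $h=0$, so it drops out. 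Symmetrically for $\{h\le-1\}$. Hence
$$\widehat{\psi}(f+g)=\psi((A,A')\vee(B,B'))+\psi((A,A')\wedge(B,B')).$$
Convexity together with positive homogeneity gives subadditivity:
$$\widehat{\psi}(f+g)=2\widehat{\psi}\big(\tfrac{f+g}{2}\big)\le\widehat{\psi}(f)+\widehat{\psi}(g)=\psi(A,A')+\psi(B,B'),$$
which is exactly \eqref{eq:bisub2}.

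For ``bisubmodular $\Rightarrow$ convex'', positive homogeneity again makes it enough to prove subadditivity $\widehat{\psi}(f+g)\le\widehat{\psi}(f)+\widehat{\psi}(g)$ for $f,g\in\Bd$.

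First reduce to simple functions. Any bounded measurable $f$ is the uniform limit of functions taking finitely many values in a lattice $\frac1N\Z$ (round toward zero so that level sets behave well). By the Lipschitz property in Lemma \ref{lem:BV-is-Psi}(b), it suffices to prove subadditivity when $f,g$ take finitely many values. After rescaling, we may assume the values are integers, and then refine to the common finite $\sigma$-algebra generated by the level sets.

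On that finite $\sigma$-algebra the problem becomes the classical finite statement: $\psi$ restricted to disjoint pairs of atoms is bisubmodular, and its disjoint-pair Choquet extension is the ``signed Lovász extension''. We prove this as in the classical theory (Qi, Fujishige). For integer-valued $f$, write
$$\widehat{\psi}(f)=\sum_{k\ge1}\psi(\{f\ge k\},\{f\le -k\}),$$
a sum over a chain of disjoint pairs.

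The key step is a lattice argument. Define the chain of pairs $P_k(f)=(\{f\ge k\},\{f\le -k\})$. We aim to show
$$\sum_k\psi(P_k(f))+\sum_k\psi(P_k(g))\ \ge\ \sum_k\psi(P_k(f+g)).$$
We do this by an uncrossing procedure. Repeatedly apply \eqref{eq:bisub2} to pairs of non-comparable members of the multiset $\{P_k(f)\}\cup\{P_k(g)\}$, replacing them by their $\vee$ and $\wedge$. This never increases the total, and it terminates because a potential such as $\sum|X|^2$ over atoms increases. The procedure ends in a chain, and we then check that this chain is exactly the family of level pairs of $f+g$, counted with the correct multiplicities.

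Equivalently, one can show that on each ``signed simplex'' (an ordering of atoms together with signs) $\widehat{\psi}$ is linear, and that bisubmodularity makes the piecewise-linear function locally convex across the walls between adjacent cells; local convexity of a continuous piecewise-linear function then gives global convexity. I would try the uncrossing route first.

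The main obstacle is the bookkeeping in the uncrossing step. The operation $\vee$ removes the points of $(A\cap B')\cup(A'\cap B)$, so unlike the submodular case, the sum of indicator-type vectors is not preserved: $h=f+g$ vanishes exactly on those cancelled points. One must verify that the final chain reproduces $\widehat{\psi}(f+g)$ and not something larger, checking pointwise that the multiplicity with which each point appears in the positive or negative parts equals $(f+g)^\pm$. The two-set computation in the first paragraph is the prototype of this check.
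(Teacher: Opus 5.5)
This is essentially the paper's proof: the uncrossing lemma $\widehat{\psi}\big(\sum_i a_i(1_{A_i}-1_{A_i'})\big)\le\sum_i a_i\psi(A_i,A_i')$ (with equality on chains) is applied to the integer layer-cake decompositions of $f$ and $g$, followed by scaling to rational step functions, Lipschitz approximation and positive homogeneity. For the converse direction the paper only says it is easy, and your two-level computation of $\widehat{\psi}(f+g)$ supplies exactly that check.

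Two small corrections. First, $\sum|X|^2$ is not a valid termination potential, because $\vee$ can cancel points: for example $(\{x\},\varnothing)$ and $(\varnothing,\{x\})$ uncross to two copies of $(\varnothing,\varnothing)$. Use $\sum\sqrt{|X|+|X'|}$ instead, which strictly decreases at each step, as in the paper. Second, the bookkeeping you flag is automatic. Each uncrossing step preserves the \emph{signed} sum $\sum(1_X-1_{X'})$; this is precisely your two-set computation. A chain of disjoint pairs with signed sum $h$ is then, up to copies of $(\varnothing,\varnothing)$, the level-pair chain of $h$.
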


The study of bisubmodularity in full generality on both combinatorial and analytic theories is helpful for developing a limit theory of oriented matroids. 

\section{Apply extension method to fractional optimization}\label{sec:frac}
In this section, via Choquet-type extensions, we establish an equivalence between fractional optimization over set (or setpair) variables and that over function variables. 
This can be applied to the cases of both graphons, Riemannian manifolds, hypergraphs and metric measure spaces.

\subsection{Functionalization of set-based fractional optimization}
Recall the notation $\Bd$ used in Lemma \ref{lem:BV-is-Psi}, that is, the Banach space of bounded measurable functions on $(J,\B)$. 
Denote by $\bm{\varPsi}(\B)$ the collection of all $\varphi:\B\to\R$ such that for any  $f\in\Bd$, the function $t\mapsto \varphi\{f\ge t\}$ is Lebesgue integrable on the interval $[\inf (f),\infty)$. 
To state our result in full generality, we introduce the following:
\begin{defn}
Given  a setfunction $\varphi\in\bm{\varPsi}(\B)$, a setpair function $\psi\in\bm{\varPsi}(\B_2)$, and two functionals $\widetilde{\varphi},\widetilde{\psi}:\Bd\to\R$.
\begin{itemize}
\item[\textbf{C1}] We say that $\widetilde{\varphi}$ is a sub-Choquet (resp., super-Choquet) extension of $\varphi$, if $\widetilde{\varphi}(1_A)=\varphi(A)$ for any $A\in\B$, and $\widetilde{\varphi}(f)\le \widehat{\varphi}(f)$ (resp., $\widetilde{\varphi}(f)\ge \widehat{\varphi}(f)$) for any $f\in\Bd$.
\item[\textbf{C2}] We say that $\widetilde{\psi}$ is a sub-Choquet (resp., super-Choquet) extension of $\psi$, if $\widetilde{\psi}(1_A-1_{A'})=\psi(A,A')$ for any $(A,A')\in\B_2$, and $\widetilde{\psi}(f)\le \widehat{\psi}(f)$ (resp., $\widetilde{\psi}(f)\ge \widehat{\psi}(f)$) for any $f\in\Bd$.
\item[\textbf{C3}] 
We say that $\widetilde{\varphi}$ is a \emph{one-homogeneous convex extension} of $\varphi$,  if $\widetilde{\varphi}$ is a lower semi-continuous  one-homogeneous convex even function, and  $\widetilde{\varphi}(1_A)=\varphi(A)$, $\forall A\in\B$. 
\item[\textbf{C4}] 
We say that $\widetilde{\psi}$ is a \emph{one-homogeneous convex extension} of $\psi$,  if $\widetilde{\psi}$ is a lower semi-continuous  one-homogeneous convex function, and  $\widetilde{\psi}(1_A-1_{A'})=\psi(A,A')$  for any $(A,A')\in\B_2$. 
\end{itemize}    
\end{defn}

\begin{theorem}\label{thm:tilde-fg-equal}
Given $\varphi_1,\varphi_2\in\bm{\varPsi}(\B)$ which are nonnegative and satisfy $\varphi_1(J)=\varphi_2(J)=0$, we have 
\begin{equation}\label{eq:Choquet-identity}
\inf_{A\in\B\text{ with }\varphi_2(A)>0}\frac{\varphi_1(A)}{\varphi_2(A)} =\inf_{f\in \Bd\text{ with }\widehat{\varphi}_2(f)>0}\frac{\widehat{\varphi}_1(f)}{\widehat{\varphi}_2(f)}   =\inf_{f\in \Bd\text{ with }\widetilde{\varphi}_2(f)>0}\frac{\widetilde{\varphi}_1(f)}{\widetilde{\varphi}_2(f)} .  
\end{equation}
where $\widetilde{\varphi}_1$ is a super-Choquet extension  of $\varphi_1$, and $\widetilde{\varphi}_2$ is a sub-Choquet extension  of $\varphi_2$ (or a one-homogeneous convex extension of $\varphi_2$ if $\varphi_2$ is submodular). 

Let $\psi_1, \psi_2 \in \bm{\varPsi}(\B_2)$, and assume that they are both nonnegative. 
Then \begin{equation}\label{eq:Choquet-pair-identity}
\inf_{(A,A')\in \B_2\text{ and }\psi_2(A,A')>0}\frac{\psi_1(A,A')}{\psi_2(A,A')}=\inf_{f\in \Bd\text{ with }\widehat{\psi}_2(f)>0}\frac{\widehat{\psi}_1(f)}{\widehat{\psi}_2(f)}=\inf_{f\in \Bd\text{ with }\widetilde{\psi}_2(f)>0}\frac{\widetilde{\psi}_1(f)}{\widetilde{\psi}_2(f)}   
\end{equation}
where $\widetilde{\psi}_1$ is a super-Choquet extension  of $\psi_1$, and $\widetilde{\psi}_2$ is a sub-Choquet extension of $\psi_2$  (or a one-homogeneous convex extension of $\psi_2$ if $\psi_2$ is bisubmodular).

\end{theorem}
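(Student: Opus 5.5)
Write $\lambda$ for the left-hand infimum of \eqref{eq:Choquet-identity}, so that $\varphi_1(A)\ge\lambda\varphi_2(A)$ for every $A\in\B$. The same holds trivially when $\varphi_2(A)=0$, since $\varphi_1\ge0$. The plan is to compare the three infima through a chain of inequalities, using the layer-cake (coarea-type) structure of the Choquet extension.

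\textbf{Step 1 (set infimum $\ge$ functional infima).} For $A$ with $\varphi_2(A)>0$, take $f=1_A$. Then $\widehat{\varphi}_i(1_A)=\varphi_i(A)=\widetilde{\varphi}_i(1_A)$, so each functional infimum is at most the set infimum. This step uses only the extension property.

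\textbf{Step 2 (the Choquet infimum is $\ge\lambda$).} Since $\varphi_i(J)=0$, the constant term in \eqref{eq:choquet} vanishes, and
\[
\widehat{\varphi}_1(f)=\int_{\inf f}^\infty\varphi_1\{f\ge t\}\,dt\ \ge\ \lambda\int_{\inf f}^\infty\varphi_2\{f\ge t\}\,dt=\lambda\,\widehat{\varphi}_2(f).
\]
This uses the pointwise inequality $\varphi_1\ge\lambda\varphi_2$ on every level set, which holds because $\varphi_1,\varphi_2\ge0$. Hence whenever $\widehat{\varphi}_2(f)>0$ the ratio is at least $\lambda$. (If $\lambda=-\infty$ there is nothing to prove; nonnegativity gives $\lambda\ge0$ anyway.)

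\textbf{Step 3 (tilde versions).} For $\widetilde{\varphi}_1\ge\widehat{\varphi}_1$ and $\widetilde{\varphi}_2\le\widehat{\varphi}_2$, any $f$ with $\widetilde{\varphi}_2(f)>0$ satisfies $\widehat{\varphi}_2(f)\ge\widetilde{\varphi}_2(f)>0$. Then
\[
\widetilde{\varphi}_1(f)\ge\widehat{\varphi}_1(f)\ge\lambda\widehat{\varphi}_2(f)\ge\lambda\widetilde{\varphi}_2(f),
\]
where the last inequality uses $\lambda\ge0$. So the tilde infimum is $\ge\lambda$, and Step 1 gives the reverse inequality.

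\textbf{Step 3' (the convex-extension alternative).} Here $\varphi_2$ is submodular and $\widetilde{\varphi}_2$ is a one-homogeneous convex extension. The key claim is again $\widetilde{\varphi}_2\le\widehat{\varphi}_2$, after which Step 3 applies verbatim. For bounded $f$, write $f$ as a limit in sup norm of step functions
\[
f_n=c_0+\sum_k \delta_k 1_{\{f\ge t_k\}},\qquad \delta_k>0,\quad \{f\ge t_k\}\ \text{nested}.
\]
By one-homogeneity and convexity, $\widetilde{\varphi}_2$ is sublinear, so
\[
\widetilde{\varphi}_2(f_n)\le\widetilde{\varphi}_2(c_0 1_J)+\sum_k\delta_k\varphi_2\{f\ge t_k\}.
\]
The constant term needs care. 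Evenness and one-homogeneity give $\widetilde{\varphi}_2(c1_J)=|c|\varphi_2(J)=0$, and this is where the evenness required in C3 is used. The remaining sum is a Riemann sum for $\widehat{\varphi}_2(f)$. Lower semicontinuity then gives $\widetilde{\varphi}_2(f)\le\liminf_n\widetilde{\varphi}_2(f_n)$, and the Riemann sums converge to $\widehat{\varphi}_2(f)$ because $t\mapsto\varphi_2\{f\ge t\}$ is integrable. With care in choosing the partition points, this yields $\widetilde{\varphi}_2\le\widehat{\varphi}_2$.

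The main obstacle is making this approximation rigorous in the non-finite sigma-algebra setting. I would try choosing partitions at continuity points of the monotone family of level sets, or replace the Riemann-sum limit by a direct integral form of Jensen's inequality for sublinear functionals. Submodularity is not even needed for the inequality itself; it only guarantees that such an extension exists, namely $\widehat{\varphi}_2$.

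\textbf{Set-pair case.} The argument for \eqref{eq:Choquet-pair-identity} is identical, with $\widehat{\psi}(f)=\int_0^\infty\psi(f\ge t,f\le-t)\,dt$ and test functions $1_A-1_{A'}$; no condition on $\psi(J,\cdot)$ is needed, since there is no constant term. For the convex alternative, approximate $f$ by
\[
\sum_k\delta_k\bigl(1_{\{f\ge t_k\}}-1_{\{f\le -t_k\}}\bigr)
\]
and use sublinearity and lower semicontinuity as in Step 3'. In this case evenness is not required, which is consistent with C4.
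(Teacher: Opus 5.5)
Your Steps 1--3 and the set-pair analogue coincide with the paper's Cases 1 and 2. The paper takes the infimum $C$ only over the level sets of the given $f$ rather than your global $\lambda$, which changes nothing.

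The genuine difference is Step 3'. The paper proves $\widetilde{\varphi}_2\le\widehat{\varphi}_2$ by duality. It writes the l.s.c.\ sublinear functional as a support function $\widetilde{\varphi}_2(f)=\sup_{g\in S}\int_J gf$, and uses evenness and $\varphi_2(J)=0$ to get $\int_J g=0$ for all $g\in S$. It then substitutes the layer-cake formula and bounds the supremum of the $t$-integral by the $t$-integral of the supremum; Case 4 is the same with the signed layer-cake formula. This is exactly the ``integral Jensen inequality for sublinear functionals'' you list as your second option. It is shorter and needs no approximation once the representation is granted.

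You stay on the primal side instead, using only subadditivity, evenness for the constant term, and lower semicontinuity along uniformly convergent step functions. That makes you independent of how $\Bd^*$ is represented; on $\Bd$ the support-function representation really ranges over finitely additive functionals, not densities $g$. You also use the closed level sets $\{f\ge t_k\}$ throughout, whereas the paper passes from $\widetilde{\varphi}_2(1_{\{f>t\}})$ to $\varphi_2\{f\ge t\}$ without comment. The price is a Riemann-sum limit for $h(t)=\varphi_2\{f\ge t\}$, which is only Lebesgue integrable.

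That limit is the one step you leave open, and your first suggested fix does not suffice. $h$ need not be Riemann integrable, and continuity of the monotone family of level sets says nothing about continuity of $h$, since $\varphi_2$ has no continuity property. A standard lemma closes it. Extend $h$ by $0$; it already vanishes for $t\le\inf f$ and $t>\sup f$ because $\varphi_2(J)=\varphi_2(\varnothing)=0$. Put $R_n(s)=\frac1n\sum_{k\in\mathbb{Z}}h(s+\frac kn)$. The triangle inequality and Tonelli give
\[
n\int_0^{1/n}\Big|R_n(s)-\int_{\mathbb{R}}h\Big|\,ds\le\sup_{0\le v\le 1/n}\|h-h(\cdot+v)\|_{L^1(\mathbb{R})}\longrightarrow0,
\]
so there are shifts $s_n$ with $R_n(s_n)\to\widehat{\varphi}_2(f)$. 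The grid $\{s_n+k/n\}$ produces step functions within $1/n$ of $f$ in sup norm, and your sublinear bound for them is exactly $R_n(s_n)$, because grid points $\le\inf f$ contribute $\varphi_2(J)=0$. Lower semicontinuity then finishes. In the set-pair case the first interval $[0,t_1]$ has length at most $1/n$, so by $\psi_2\ge0$ the bound is still at most the shifted Riemann sum.

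Finally, your aside that submodularity guarantees a C3 extension, ``namely $\widehat{\varphi}_2$'', is not right in general. $\widehat{\varphi}_2$ is convex and one-homogeneous, but $\widehat{\varphi}_2(-1_A)=\varphi_2(J\setminus A)$, so it is even only if $\varphi_2(A)=\varphi_2(J\setminus A)$ for all $A$. This does not affect your proof.
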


\begin{remark}
For such a measurable space $(J,\B)$, we usually assume that there is a Borel measure $\mu$, so that  $(J,\B,\mu)$ becomes a Borel measure space. In addition, unless otherwise stated, we always assume $0<\mu(J)<\infty$. 
Then, we can also work on $L^\infty(J) :=\{f:J\to\R: \exists \widetilde{f}\in\Bd\text{ s.t. }\mu\{x\in J:f(x)\ne \widetilde{f}(x)\}=0\}$ instead of $\Bd$, and use $\essinf$ instead of $\inf$. 
However, for the sake of brevity, we intend to work on $\Bd$ rather than $L^\infty$. 
Moreover, we present at the end of Section \ref{sec:proof-main-equal} a formal `sup'-analog of Theorem \ref{thm:tilde-fg-equal}—that is, by  replacing all `inf' with `sup' in \eqref{eq:Choquet-pair-identity} and \eqref{eq:Choquet-identity}.  
\end{remark}

\noindent{\bf Applicability of Theorem \ref{thm:tilde-fg-equal}}: 
Theorem \ref{thm:tilde-fg-equal} establishes a framework based on a Choquet-type extension, which provides a method for equivalently transforming fractional optimization over set variables to that over function variables, thereby facilitating the subsequent application of the calculus of variations. 
This is applicable for many distinct spaces and different problems such as maxcut, bipartiteness ratio and conductance on graph limits, and Cheeger constant and torsional rigidity on Riemannian manifolds, etc. 


We apply Theorem \ref{thm:tilde-fg-equal} to obtain equivalent explicit formulations for some quantities on graphons and manifolds; see Section \ref{subsec:first-example} for some typical examples and applications of Theorem \ref{thm:tilde-fg-equal}. Combining these equivalent formulations, and monotonicity inequalities 
established in Theorems \ref{thm:critical-inequality} and \ref{thm:critical-inequality2}, we provide sharp bounds for many important parameters on various spaces; see Sections \ref{sec:Lp-integration} and \ref{sec:Applications}.

\subsection{Applications on graphons and Riemannian manifolds}\label{subsec:first-example}

In this section, we apply Theorem \ref{thm:tilde-fg-equal} to give equivalent representation for many important quantities or parameters. 
Since at least half of these quantities are considered on a graphon, we briefly write its definition and the setting. Generally speaking, consider a pair $(J,W)$ in which $J$ is a measure space with finite measure, and $W:J\times J\to I$ is a symmetric measurable function, i.e., $W(y,x)=W(x,y)$ for any $(x,y)\in J\times J$, where $I$ is commonly set to be the interval $[0,1]$. In the literature, such a $W$ is referred to as a \emph{graphon}; see \cite{Lovasz-book}. 
We call $W$ a bounded graphon, if $I$ is replaced by a bounded interval $[0,N]\subset [0,\infty)$ for some $N>0$. 

\subsubsection{Maxcut on graphon}
The first example we would like to give is the maxcut problem on graphon. 
The graphon provides a general, elegant criterion for testability: a parameter is testable if and only if it is continuous with respect to the cut distance on the space of graphons. In this framework, the MaxCut parameter is a prime example \cite{Borgs06}. It has been proven to be continuous in this metric, which provides a short, unified proof of its testability. 
Below, we provide a functional reformulation of the maxcut problem on graphons, which, together with the monotonicity inequality established in Section \ref{sec:Lp-integration}, shows hidden connections to other important graphon parameters. 
\begin{example}[Maxcut on graphon]\label{exam:maxcut}
Given a graphon $W:J\times J\to [0,1]$, for each Borel subset $S\subset J$, let $$\partial S=\{(x,y)\in J\times J:x\in S,y\not\in S\text{ or }y\in S,x\not\in S\}$$
denote the edge-boundary of $S$, and let $\eta(\partial S)=\int_{\partial S}W(x,y)dxdy$ be the measure of $\partial S$. The cut-size induced by $S$ is defined as $\mathrm{cut}(S):= \eta(\partial S)$. Then 
$$\mathrm{cut}(S)=2\int_{S}\int_{J\setminus S}W(x,y)dxdy.$$ Taking $\psi_1(S,S')=\mathrm{cut}(S)+\mathrm{cut}(S')$ and $\psi_2(S,S')=2$, $\forall (S,S')\in\B_2\setminus\{(\emptyset,\emptyset)\}$, we can easily obtain  $\widehat{\psi}_1(f)=\int_{J}\int_{J} W(x,y)|f(x)-f(y)|dxdy$ which refers to the total variation of $f$, and we can derive $\widehat{\psi}_2(f)=2\|f\|_\infty$. Then by Theorem \ref{thm:tilde-fg-equal}, we have
$$ \text{MaxCut}(W):=\sup_{S\in\B}\mathrm{cut}(S)=\sup_{f\in \Bd }\frac{\int_{J}\int_{J} W(x,y)|f(x)-f(y)|dxdy}{2\|f\|_\infty}$$
which indicates the \emph{maxcut} value of the graphon $W$; see \cite{Zhang/graphon} for details on the maxcut problem on graphons.
\end{example}

\subsubsection{Bipartiteness ratio on graphon}
The second example is the bipartiteness ratio on graphons, which is a quantitative version of the bipartiteness of a graphon. 
The dual Cheeger inequality relating the bipartiteness ratio and the spectrum of the graphon Laplacian is also an example of the powerful monotonicity inequality (i.e., Theorem \ref{thm:critical-inequality2} in Section \ref{sec:Lp-integration}), from which, one can immediately see that a graphon is exactly bipartite (i.e., it admits a measurable partition into two independent sets) if and only if its spectrum is symmetric about zero. 
We provide in the following example an equivalent reformulation of the bipartiteness ratio on a graphon, which helps to understand some new spectral bounds for bipartiteness ratio presented in Section \ref{subsec:spectral-bound}.
\begin{example}[Bipartiteness ratio on graphon]\label{exam:bipartiteness} For two Borel subsets $S,S'\in\B$, 
denote by $e(S,S')=\int_S\int_{S'}W(x,y)dxdy+\int_{S'}\int_{S}W(x,y)dxdy$. Let $\psi_1(S,S')=e(S,S')$ and $\psi_2(S,S')=\vol(S)+\vol(S')$, where $\vol(S):=\int_{S}\deg(x)dx$ and $\deg(x):=\int_J W(x,y)dy$. 
Then $$\widehat{\psi}_1(f)=\int_J\deg(x)|f(x)|dx-\frac12\int_{J}\int_{J} W(x,y)|f(x)+f(y)|dxdy$$ and $\widehat{\psi}_2(f)=\int_J\deg(x)|f(x)|dx$. 
We can then apply Theorem \ref{thm:tilde-fg-equal} to derive
$$\beta_W:= 1-\sup_{S,S'\in\B,S\cap S'=\varnothing,\vol(S\cup S')>0}\frac{e(S,S')}{\vol(S\cup S')}=\inf_{f\in \Bd}\frac{\int_J\int_JW(x,y)|f(x)+f(y)|dxdy}{2\int_J\deg(x)|f(x)|dx} $$
which refers to the \emph{bipartiteness ratio} of the graphon $W$. This concept follows Trevisan's work on maxcut problem for graphs \cite{Trevisan2012}, in which he utilised the graph bipartiteness ratio to design a recursive algorithm for solving the maxcut problem. 
Note also that $1-\beta_W$ provides an upper bound for the maxcut value $\text{MaxCut}(W)$ via the inequality $\text{MaxCut}(W)\le (1-\beta_W)\|W\|_1$. 
\end{example}

\subsubsection{Conductance on graphon}
The third example is the conductance (or Cheeger constant) on graphons, which quantifies the connectedness of a graphon, and which also satisfies a Cheeger inequality almost the same to the graph case \cite{KM24}. 
We show an equivalent functional representation of the graphon conductance in the following example, and this functional representation, again directly combining with the monotonicity inequality in Theorem \ref{thm:critical-inequality2}, will be used to derive refined Cheeger-type inequalities for the graphon conductance. 
\begin{example}[Conductance on graphon]\label{example:conductance-graphon}
Given a graphon $W:J\times J\to [0,1]$, for each Borel subset $S\subset J$, let $\varphi_1(S)=\mathrm{cut}(S)
$, which will be regarded as the measure of the boundary of $S$. Let $\varphi_2(S)=\min\{\vol(S),\vol(J\setminus S)\}
$. 
Then, by elementary computation, we have $\widehat{\varphi}_1(f)=\int_{J}\int_{J} W(x,y)|f(x)-f(y)|dxdy$ and $\widehat{\varphi}_2(f)=\min\limits_{t\in\R}\int_{J} \deg(x)|f(x)-t|dx$. Consequently, Theorem \ref{thm:tilde-fg-equal} implies 
$$ \inf_{S\in\B:\vol(S)\vol(J\setminus S)>0}\frac{\mathrm{cut}(S)}{\min\{\vol(S),\vol(J\setminus S)\}}=\inf_{\text{nonconstant }f\in \Bd}\frac{\int_J\int_JW(x,y)|f(x)-f(y)|dxdy}{\min\limits_{t\in\R}\int_{J} \deg(x)|f(x)-t|dx} .$$
The quantity in the left hand side is called the \emph{conductance} of the graphon $W$, 
and with careful selection of $\psi_1$ and $\psi_2$ (or $\widetilde{\varphi}_1$), it can also be written as the following alternative functional reformulation
$$ \inf_{\text{nonconstant }f\in \Bd}\frac{2\|W\|_1\|f\|_\infty-\int_J\int_JW(x,y)|f(x)+f(y)|dxdy}{\min\limits_{t\in\R}\int_{J} \deg(x)|f(x)-t|dx} $$
which, however, possesses some advantages on its corresponding Euler-Lagrange equation, particularly on the computation since the minimizers of the nonlinear Rayleigh quotient $(\|\cdot\|_\infty-\langle g,\cdot\rangle)/\|\cdot\|_p$ can be expressed in closed form. 
\end{example}

\subsubsection{Frustration on signed graphons}
As a key measure for analyzing signed networks, the frustration index on a signed graph quantifies how far a signature is from being balanced.  
This concept can be easily extended to  signed graphons, which quantifies how far a signed graphon $W$ is from its underlying graphon $|W|$. Here, a signed graphon is defined as  a measurable symmetric function $W: J\times J\to [-1,1]$, and the underlying graphon $|W|$ is the nonnegative measurable symmetric function simply defined via $|W|(x,y):=|W(x,y)|$, $\forall (x,y)\in J\times J$.

\begin{example}
\label{example:frustration}
We focus on the \emph{frustration index} of a signed graphon $W$, namely,
$$\mathrm{Fru}(W):=\min_{\text{measurable }f:J\to \{-1,1\}}\int_{J\times J}|W(x,y)|\big|f(x)-\mathrm{sgn}(W(x,y))f(y)\big|dxdy.$$
Then, \begin{align*}
 \mathrm{Fru}(W)&=\inf_{A\in \B}4\eta(W_+[A\times A^c])+2\eta \big(W_-[(A\times A)\cup(A^c\times A^c)]\big)   
 \\&=2\eta(W_-[J\times J])+4\inf_{A\in \B}\big(\eta(W_+[A\times A^c])-\eta(W_-[A\times A^c])\big)
\end{align*}
where $W_\pm[S]:=\{(x,y)\in S:\pm W(x,y)>0\}$ and   $\eta(S):=\int_S|W(x,y)|dxdy$ for any measurable $S\subset J\times J$. Theorem \ref{thm:tilde-fg-equal} yields that the frustration index satisfies the following identity
$$ \mathrm{Fru}(W)=2\|W_-\|_1+\inf\limits_{f\ne0}\frac{\int_{J\times J}W(x,y)|f(x)-f(y)|dxdy}{\|f\|_\infty}$$
where $W_-(x,y):=\min\{W(x,y),0\}$, $\forall (x,y)\in J\times J$.
\end{example}

\subsubsection{Dirichlet $p$-isoperimetric constant on Riemannian manifolds}

Suppose that $M$ is a Riemannian manifold of dimension $n$. 
Let $\Omega\subset M$ be an open bounded domain with Lipschitz boundary. The  Dirichlet $p$-isoperimetric constant of $\Omega$ is defined by 
$$\mathrm{ID}_p(\Omega):=\inf_{A\subset \Omega:\,\partial A\cap\partial \Omega=\varnothing}\frac{\mathcal{H}^{n-1}(\partial A)}{\big(\mathcal{H}^{n}( A)\big)^{\frac1p}}$$
where $\mathcal{H}^{j}$ indicates the standard $j$-dimensional Hausdorff measure for $j=n-1$ or $j=n$.  

Let $\varphi_1(A)=\mathcal{H}^{n-1}(\partial A)$ for any $A\in\B$. Then $\widehat{\varphi}_1(f)=\int_\Omega|\nabla f(x)|dx$. 

Let $\widetilde{\varphi}_2(f)=\|f\|_{L^p(\Omega)}$. Then $\widetilde{\varphi}_2$ is a one-homogeneous convex even function, and $\widetilde{\varphi}_2(1_A)=\|1_A\|_{L^p}=\big(\mathcal{H}^{n}(A)\big)^{\frac1p}$. Therefore, Theorem \ref{thm:tilde-fg-equal} yields the following equality 
$$\mathrm{ID}_p(\Omega)=\inf_{f|_{\partial \Omega}=0}\frac{\int_\Omega|\nabla f(x)|dx}{\|f\|_{L^p(\Omega)}}.$$

\subsubsection{Dirichlet $p$-isoperimetric constant on graphons}

As a generalization of the result in the previous section, we introduce the Dirichlet $p$-isoperimetric constant on graphons.

\begin{example}

Fixed $J_0\in \B$ with $0<\mu(J_0)<\mu(J)$ 
and let $\varphi_1(S)=\mathrm{cut}(S)$ for $S\in\B_{J_0}$, where $\B_{J_0}:=\{S\in \B:S\subset J_0\}$. Given $p\ge 1$, let  $\varphi_2(S)=\vol(S)^{\frac1p}$.  
Then  by Theorem \ref{thm:tilde-fg-equal}, the Dirichlet $p$-isoperimetric constant on $(J,W)$ is 
$$\mathrm{ID}_{p,J_0}:=\inf_{S\in\B_{J_0}}\frac{\mathrm{cut}(S)}{\vol(S)^{\frac1p}}=\inf_{\text{nonzero }f\in \Bd\text{ with }f|_{J\setminus J_0}=0}\frac{\int_J\int_JW(x,y)|f(x)-f(y)|dxdy}{\big(\int_{J} \deg(x)|f(x)|^pdx\big)^{\frac1p}} . $$
\end{example}
We summarise the conclusions from the above examples as follows.
\begin{corollary}\label{cor:6-equality}By Theorem \ref{thm:tilde-fg-equal}, we have the equivalent explicit formulations: 
\begin{enumerate}
\item The maxcut on graphon
$$ \mathrm{MaxCut}(W)=\sup_{f\in \Bd }\frac{\int_{J}\int_{J} W(x,y)|f(x)-f(y)|dxdy}{2\|f\|_\infty}$$
\item The bipartiteness ratio on graphon
$$\beta_W=\inf_{f\in \Bd}\frac{\int_J\int_JW(x,y)|f(x)+f(y)|dxdy}{2\int_J\deg(x)|f(x)|dx} $$
\item The conductance on graphon
\begin{align*}
h_W&=\inf_{\text{nonconstant }f\in \Bd}\frac{\int_J\int_JW(x,y)|f(x)-f(y)|dxdy}{\min\limits_{t\in\R}\int_{J} \deg(x)|f(x)-t|dx}
\\&=
\inf_{\text{nonconstant }f\in \Bd}\frac{2\|W\|_1\|f\|_\infty-\int_J\int_JW(x,y)|f(x)+f(y)|dxdy}{\min\limits_{t\in\R}\int_{J} \deg(x)|f(x)-t|dx} 
\end{align*}
\item The Dirichlet $p$-isoperimetric constant on graphon
$$\mathrm{ID}_{p,J_0}=\inf_{\text{nonzero }f\in \Bd\text{ with }f|_{J\setminus J_0}=0}\frac{\int_J\int_JW(x,y)|f(x)-f(y)|dxdy}{\big(\int_{J} \deg(x)|f(x)|^pdx\big)^{\frac1p}} $$
\item The frustration on signed graphon
$$ \mathrm{Fru}(W)=2\int_{J\times J}|\min\{W(x,y),0\}|dxdy+\inf\limits_{f\ne0}\frac{\int_{J\times J}W(x,y)|f(x)-f(y)|dxdy}{\|f\|_\infty}$$
\item Dirichlet $p$-isoperimetric constant of bounded domains in Riemannian manifolds:
$$\mathrm{ID}_p(\Omega)=\inf_{f|_{\partial \Omega}=0}\frac{\int_\Omega|\nabla f(x)|dx}{\|f\|_{L^p(\Omega)}}.$$
\end{enumerate}
\end{corollary}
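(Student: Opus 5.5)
The corollary is a collection of six instances of Theorem \ref{thm:tilde-fg-equal}. For each item the work is the same: choose a set (or setpair) function $\varphi_1,\varphi_2$ (or $\psi_1,\psi_2$), compute its Choquet-type extension, and check that the theorem's hypotheses hold. The hypotheses are nonnegativity, the normalization $\varphi_i(J)=0$ in the set case, and the sub/super-Choquet or one-homogeneous convex property for any $\widetilde\varphi$ used in place of $\widehat\varphi$. The extensions all come from the layer-cake (coarea) identity $|a-b|=\int_{\R}|1_{\{a\ge t\}}-1_{\{b\ge t\}}|\,dt$ and from its setpair analogue $|a+b|=\int_0^\infty \big(|1_{a\ge t}-1_{b\le -t}|+|1_{b\ge t}-1_{a\le -t}|\big)dt$ (sign-adjusted). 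These are combined with Fubini, which applies because $W$ is bounded and $\mu(J)<\infty$.

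\textbf{Items 1--3.} For item 1 I would take $\psi_1(S,S')=\mathrm{cut}(S)+\mathrm{cut}(S')$ and $\psi_2\equiv 2$ off $(\emptyset,\emptyset)$. Fubini and the coarea identity split $f$ into $f_+$ and $f_-$ and give $\widehat\psi_1(f)=\iint W|f(x)-f(y)|$. A direct integration gives $\widehat\psi_2(f)=2\|f\|_\infty$. Since the problem is a supremum, I would use the `sup'-analog of Theorem \ref{thm:tilde-fg-equal} mentioned in the remark. The left side then equals $\sup_{S,S'}(\mathrm{cut}(S)+\mathrm{cut}(S'))/2=\sup_S\mathrm{cut}(S)$, taking $S'=\emptyset$ or $S'=J\setminus S$.

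Item 2 uses $\psi_1=e(S,S')$ and $\psi_2=\vol(S)+\vol(S')$. I would verify the stated extension formula pointwise in $(x,y)$ by cases on the signs of $f(x)$ and $f(y)$. Then $1-\psi_1/\psi_2=(\psi_2-\psi_1)/\psi_2$, and the extension of the numerator is $\frac12\iint W|f(x)+f(y)|$ by linearity of $\psi\mapsto\widehat\psi$.

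Item 3 uses $\varphi_1=\mathrm{cut}$ and $\varphi_2=\min\{\vol(S),\vol(J\setminus S)\}$. Both vanish on $J$. The extension of $\varphi_2$ is the median formula $\min_t\int\deg|f-t|$, which I would obtain by writing $\int\deg|f-t|=\int_{-\infty}^{\infty}\big(\vol\{f\ge s\}1_{s>t}+\vol\{f<s\}1_{s\le t}\big)ds$ (up to constants) and optimizing in $t$ at a weighted median. For the second formula in item 3, I would check that $\widetilde\varphi_1(f)=2\|W\|_1\|f\|_\infty-\iint W|f(x)+f(y)|$ is a super-Choquet extension of $\mathrm{cut}$. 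It agrees on indicators because $|1_A(x)+1_A(y)|$ equals $2$ on $A\times A$ and $1$ on the boundary. The inequality $\widetilde\varphi_1\ge\widehat\varphi_1$ follows from the pointwise bound $|a-b|+|a+b|=2\max(|a|,|b|)\le 2\|f\|_\infty$.

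\textbf{Items 4--6.} Item 5 comes from the second displayed expression for $\mathrm{Fru}(W)$. I would apply the functional reformulation to $\varphi_1(A)=\eta(W_+[A\times A^c])-\eta(W_-[A\times A^c])$, a signed cut with extension $\frac12\iint W|f(x)-f(y)|$, against the denominator $\|f\|_\infty$. This item needs care, because $\varphi_1$ may be negative, so nonnegativity fails. I would handle it by applying the theorem to a nonnegative shift of $\varphi_1$, or by arguing both inequalities directly. The direction ``$\le$'' is the layer-cake average of level sets. The direction ``$\ge$'' is immediate because indicator functions are admissible.

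Item 4 uses $\widetilde\varphi_2(f)=(\int\deg|f|^p)^{1/p}$ on functions vanishing off $J_0$. It is a one-homogeneous convex even extension of $\vol(S)^{1/p}$, and the latter is submodular because it is a concave function of a measure. Item 6 is the same argument with the coarea formula $\int|\nabla f|=\int\mathcal H^{n-1}(\partial\{f\ge t\})dt$ for BV or Lipschitz $f$.

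\textbf{Main obstacle.} I expect the real difficulty in items 4 and 6 to be the boundary or Dirichlet restriction and the regularity class. Theorem \ref{thm:tilde-fg-equal} is stated on $\Bd$, but the Riemannian version needs $f$ in $BV$ with zero trace, and the level sets of a general $f$ may touch $\partial\Omega$. I would first restrict the sigma-algebra to $\B_{J_0}$, respectively to sets compactly contained in $\Omega$, and treat $\varphi_1=+\infty$ (or the perimeter in $M$) otherwise. I would then use density of smooth compactly supported functions in BV with respect to strict convergence, so that the infimum is unchanged.
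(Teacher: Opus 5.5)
\textbf{Verdict: the proposal has a genuine gap in item 5, and a false verification in the second formula of item 3.} Items 1, 2 and the first formula of item 3 follow the paper's route: the paper just plugs the chosen $\psi_i,\varphi_i$ into Theorem~\ref{thm:tilde-fg-equal} or its sup-version.

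\textbf{Item 5.} The denominator $\|f\|_\infty$ is not available in the set-function version. It would have to extend some $\varphi_2$ with $\varphi_2(J)=0$, but $\|1_J\|_\infty=1$; the Choquet extension of $A\mapsto\mathbf{1}[A\neq\emptyset]$ is $\sup f$, not $\|f\|_\infty$. A constant shift of $\varphi_1$ would break $\varphi_1(\emptyset)=\varphi_1(J)=0$. More seriously, your two directions do not meet. With $m:=\inf_A\varphi_1(A)\le 0$, layer-cake averaging only gives $\widehat\varphi_1(f)\ge(\sup f-\inf f)\,m\ge 2m\|f\|_\infty$. Indicators only give $\inf_f\widehat\varphi_1(f)/\|f\|_\infty\le m$. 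The true value is $2m$, and indicators do not attain it. Take $W=-1$ on $[0,\frac12)\times[\frac12,1]\cup[\frac12,1]\times[0,\frac12)$ and $0$ elsewhere. Then $m=-\frac14$, whereas $f=1_{[0,1/2)}-1_{[1/2,1]}$ gives $-\frac12$. Using indicators only, the formula would return $\mathrm{Fru}(W)=\frac12$, although this balanced $W$ has $\mathrm{Fru}(W)=0$.

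The fix is to treat item 5 in the setpair framework, exactly like item 1. Put $\sigma(A):=\iint W|1_A(x)-1_A(y)|$, $\psi_1(A,A')=\sigma(A)+\sigma(A')$, and $\psi_2\equiv1$ on nonempty pairs. Then $\widehat\psi_1(f)=\iint W|f(x)-f(y)|$, since the $f_\pm$ computation is valid for signed $W$, and $\widehat\psi_2(f)=\|f\|_\infty$. The pair infimum is $2\inf_A\sigma(A)=4m$. The upper bound comes from $A'=J\setminus A$, i.e.\ from the test function $1_A-1_{J\setminus A}$. Nonnegativity of $\psi_1$ enters the proof of the setpair identity only at levels where $\psi_2$ vanishes, i.e.\ at the empty pair, where $\psi_1=0$. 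So it is harmless here; alternatively add $c\psi_2$, which is legitimate in this setting because $\widehat{c\psi_2}=c\|f\|_\infty$.

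\textbf{Item 3, second formula.} On $A^c\times A^c$ one has $|1_A(x)+1_A(y)|=0$. Hence $\widetilde\varphi_1(1_A)=\mathrm{cut}(A)+2\iint_{A^c\times A^c}W\neq\mathrm{cut}(A)$, so $\widetilde\varphi_1$ is not a super-Choquet extension of $\mathrm{cut}$ and the theorem cannot be quoted. Your pointwise bound $\widetilde\varphi_1\ge\widehat\varphi_1$ still gives the half ``$\ge h_W$''. For the other half, test $f=1_S-1_{J\setminus S}$: this gives $\widetilde\varphi_1(f)=2\,\mathrm{cut}(S)$ and $\min_t\int\deg|f-t|=2\min\{\vol(S),\vol(J\setminus S)\}$.

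\textbf{Items 4 and 6 (smaller point).} Here $\vol(J)^{1/p}\neq0$ (resp.\ $\mathcal{H}^n(\Omega)^{1/p}\neq0$), so the hypothesis $\varphi_2(J)=0$ fails, and restricting the $\sigma$-algebra does not restore it. Instead, replace $f$ by $|f|$: the numerator does not increase and the denominator is unchanged. Then use the version of the theorem for nonnegative $f$, where the term $\varphi(J)\inf f$ is absent. The required sub-Choquet bound $\|f\|_{L^p(\deg)}\le\int_0^\infty\vol\{f\ge t\}^{1/p}\,dt$ is just Minkowski's integral inequality, and submodularity is not needed.
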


These equivalent representations serve two main purposes: firstly, they can be combined with the monotonicity theorem proposed in Section \ref{sec:Lp-integration} to derive many useful inequalities, see Section \ref{sec:Applications}; secondly, these explicit functional representations in closed form can be directly used to design algorithms for these combinatorial problems. 

There are sharp spectral bounds for the maxcut value, the bipartiteness ratio, and the conductance on graphons, which will be discussed in detail in Sections \ref{sec:Lp-integration} and \ref{sec:Applications}. 




\section{
$L^p$ integration of a family 
of Choquet extensions 
}
\label{sec:Lp-integration}


A notable difference from previous work in literature 
is that we propose $L^p$ integration of a family of Choquet integrals (Definitions \ref{def:Phi-integration} and \ref{def:Psi-integration}), which allows us to apply the extension methods to estimate many combinatorial and geometric quantities. 

\subsection{Global extension constants}
\begin{defn}[$L^p$ integration of Choquet extensions]\label{def:Phi-integration}
Let $E$ be a measure space with a measure $\eta$. 
Given a family of setfunctions 
$$\varphi_e:\B \to [0,+\infty)\text{ with }\varphi_e(J)=\varphi_e(\emptyset)=0,\; e\in E,$$
for $p\ge 1$, we define $$\Phi_p(f):=\Big(\int_E \big(\widehat{\varphi}_e(f)\big)^pd\eta(e)\Big)^{\frac1p},$$
and 
$$\big\llbracket f \big\rrbracket_p :=\Big(\int_E|f|_{\varphi_e}^pd\eta(e)\Big)^{\frac1p}$$
where $|f|_{\varphi_e}:=\inf \big\{t>0: \varphi_e\{f\ge s\}=\varphi_e\{f\ge -s\}=0,\forall s\ge t\big\}$ represents the intrinsic infinity norm of $f$ with respect to $\varphi_e$.
\end{defn}

Replacing the original Choquet extension used in Definition \ref{def:Phi-integration} with the disjoint-pair Choquet extension, we have the following definition. 
\begin{defn}[$L^p$ integration of disjoint-pair Choquet extensions]\label{def:Psi-integration}
Let $E$ be a measure space equipped with a measure $\eta$. Given a  family of set-pair functions 
$$\psi_e:\B\times \B\to [0,+\infty)\text{ with }\psi(\emptyset,\emptyset)=0,\; e\in E,$$
for $p,q\ge 1$, we define
$$\Psi_p(f):=\Big(\int_E \big(\widehat{\psi}_e(f)\big)^pd\eta(e)\Big)^{\frac1p},$$
and 
$$\big\llbracket f\big\rrbracket_q:=\Big(\int_E\|f\|_{\psi_e}^qd\eta(e)\Big)^{\frac1q}$$
where $\|f\|_{\psi_e}=\inf\{t>0:\psi_e(f\ge s,f\le-s)= 0,\forall s\ge t\}$  denotes  the intrinsic infinity norm of $f$ with respect to $\psi_e$. 
\end{defn}

It is clear that for any $p,q\in[1,\infty]$, $\Phi_p$, $\Psi_p$ and $\llbracket \cdot\rrbracket_q$ are  positively one-homogeneous functionals. Based on Definitions \ref{def:Phi-integration} and \ref{def:Psi-integration}, we further propose the following definition: 
\begin{defn}[Global extension constants]
\label{def:global-extension-cons}
A set family $\mathcal{Y}\subset 2^\Bd$ is \emph{admissible} if $\mathcal{Y}$ is invariant under Mazur-type maps, that is, if $Y\in \mathcal{Y}$, then for any $r>0$, $\{f^r:f\in Y\}\in \mathcal{Y}$, where $f\mapsto f^r$ refers to the Mazur map of order $r$ (see \eqref{eq:Mazur-map} for the definition). 
We can define the 
\emph{min-max value} of $\Phi_p/\llbracket \cdot\rrbracket_q$ with respect to an admissible $\mathcal{Y}$ as 
$$ 
c
(\Phi_p,\llbracket \cdot\rrbracket_q):=\inf\limits_{Y\in \mathcal{Y}}\sup\limits_{f\in Y} \frac{\Phi_p(f)}{\big\llbracket f\big\rrbracket_q}.$$
Similarly, 
$$ 
c
(\Psi_p,\llbracket \cdot\rrbracket_q):=\inf\limits_{Y\in \mathcal{Y}}\sup\limits_{f\in Y} \frac{\Psi_p(f)}{\big\llbracket f\big\rrbracket_q}.$$
We 
call $c
(\Phi_p,\llbracket \cdot\rrbracket_q)$ and $c
(\Psi_p,\llbracket \cdot\rrbracket_q)$ the \emph{global extension constants}.

\end{defn}

Now we show a concrete construction of the admissible set family $\mathcal{Y}$. 
For any $k\in \mathbb{N}_+$, let 
\begin{equation}\label{eq:Y_k}
\mathcal{Y}_k=\{A\subset \Bd\setminus\{0\}:\mathrm{genus}(A)\ge k\}    
\end{equation}
where $\mathrm{genus}(A)$ is the Krasnoselskii genus of $A$ defined by 
\[
\mathrm{genus}(A)=
\inf\big\{ k\in\mathbb{N}\;|\; \exists\,\text{continuous }\zeta:A\to \mathbb{S}^{k-1}\;\, s.t. \;\, \zeta(x)=-\zeta(-x)\big\rbrace  
\]
and $\mathbb{S}^{k-1}$ is the standard unit sphere of $\R^k$. Then, it is known that $\mathcal{Y}_k$ is admissible. 
The global extension constants with respect to such $\mathcal{Y}_k$ will be simply written as $c_k
(\Phi_p,\llbracket \cdot\rrbracket_q)$ and $c_k
(\Psi_p,\llbracket \cdot\rrbracket_q)$. 
Furthermore, the admissible set family $\mathcal{Y}$ can also be taken as the single set $\{\Bd\setminus\{0\}\}$.

The 
global extension constants introduced in the above definition can be applied to express lots of useful combinatorial and analytic parameters, which we will explain in Section \ref{sec:Applications}. 

\subsection{Monotonicity of various quantities on measure spaces
}

Our main result is the following monotonicity inequalities which integrate many useful combinatorial parameters and analytic quantities. 
\begin{theorem}[Extended Monotonicity Inequality]\label{thm:critical-inequality}
Given $p,q,s,t\ge1$ with $ps'=qt'$, where $s'$ and $t'$ are the H\"older conjugates of $s$ and $t$, respectively.  
We have the following monotonicity inequality 
\begin{equation}\label{eq:c-monotone1}
c(\Phi_p,\llbracket \cdot\rrbracket_{q})\le t\cdot  c(\Phi_{ps},\llbracket \cdot\rrbracket_{qt}).    
\end{equation}
In particular, $p\cdot c(\Phi_p,\llbracket \cdot\rrbracket_{p})$ is increasing with respect to $p$. In addition, $p\mapsto \eta(E)^{-1/p}c(\Phi_p,\llbracket \cdot\rrbracket_{q})$ increases, and $q\mapsto \eta(E)^{1/q}c(\Phi_p,\llbracket \cdot\rrbracket_{q})$ decreases.  

Moreover, all the above properties still hold when we replace $\Phi$ by $\Psi$, and replace $\llbracket \cdot\rrbracket$ by its corresponding set-pair  version. 
\end{theorem}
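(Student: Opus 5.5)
The idea is to use the Mazur map $f\mapsto f^r:=\mathrm{sgn}(f)|f|^r$, under which the admissible family $\mathcal{Y}$ is invariant, and to compare the Rayleigh quotients of $f$ and $f^r$ pointwise in $e\in E$. Fix $Y\in\mathcal{Y}$ and set $r:=t$ (to be justified by the exponent bookkeeping). The claim to prove is that for every nonzero $f\in\Bd$,
\begin{equation*}
\frac{\Phi_p(f^{t})}{\llbracket f^{t}\rrbracket_q}\le t\,\frac{\Phi_{ps}(f)}{\llbracket f\rrbracket_{qt}} .
\end{equation*}
Taking $\sup_{f\in Y}$ then gives $\sup_{g\in Y^t}\Phi_p(g)/\llbracket g\rrbracket_q\le t\sup_{f\in Y}\Phi_{ps}(f)/\llbracket f\rrbracket_{qt}$, where $Y^t=\{f^t:f\in Y\}\in\mathcal{Y}$. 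Taking $\inf_{Y}$ on both sides yields \eqref{eq:c-monotone1}.

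\textbf{Step 1 (pointwise chain rule for Choquet integrals).} For fixed $e$, the level sets satisfy $\{f^t\ge \tau\}=\{f\ge \tau^{1/t}\}$ and $\{f^t\le -\tau\}=\{f\le-\tau^{1/t}\}$ for $\tau>0$. Since $\varphi_e(J)=0$, the Choquet integral reduces to integrals over $t$-levels on $(0,\infty)$ plus the analogous negative part. Substituting $\tau=\sigma^t$ gives
\begin{equation*}
\widehat{\varphi}_e(f^t)=\int t\sigma^{t-1}\varphi_e\{f\ge\sigma\}\,d\sigma,
\end{equation*}
with an analogous expression for the negative part. Because $\varphi_e\ge0$ vanishes at levels beyond $|f|_{\varphi_e}$, we have $\sigma^{t-1}\le |f|_{\varphi_e}^{t-1}$ on the support, so
\begin{equation*}
\widehat{\varphi}_e(f^t)\le t\,|f|_{\varphi_e}^{t-1}\widehat{\varphi}_e(f).
\end{equation*}
The same argument works verbatim for $\widehat{\psi}_e$ using \eqref{eq:choquet-pair}. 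One also checks that $|f^t|_{\varphi_e}=|f|_{\varphi_e}^t$. The main care needed here is the case of $\widehat\varphi_e$ with negative $\inf f$: I would write $\widehat{\varphi}_e(f)=\int_0^\infty\varphi_e\{f\ge\sigma\}d\sigma+\int_{-\infty}^0(\varphi_e\{f\ge\sigma\}-\varphi_e(J))d\sigma$ and note that $\{f\ge \sigma\}$ for $\sigma<0$ is the complement of $\{f<\sigma\}$. This requires the value $\varphi_e(J)=0$ and treating the negative tail via $\{f\ge -s\}$ as in the definition of $|f|_{\varphi_e}$.

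\textbf{Step 2 (H\"older).} Integrating the bound from Step 1 gives
\begin{equation*}
\Phi_p(f^t)^p\le t^p\int_E \widehat\varphi_e(f)^p|f|_{\varphi_e}^{p(t-1)}d\eta .
\end{equation*}
Apply H\"older with exponents $s,s'$ to get
\begin{equation*}
\Phi_p(f^t)\le t\,\Phi_{ps}(f)\Big(\int_E|f|_{\varphi_e}^{p(t-1)s'}d\eta\Big)^{1/(ps')}.
\end{equation*}
Meanwhile $\llbracket f^t\rrbracket_q=(\int|f|_{\varphi_e}^{qt}d\eta)^{1/q}=\llbracket f\rrbracket_{qt}^t$. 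The hypothesis $ps'=qt'$ gives $p(t-1)s'=qt'(t-1)=qt$, so the H\"older factor equals $\llbracket f\rrbracket_{qt}^{qt/(ps')}=\llbracket f\rrbracket_{qt}^{t-1}$. Dividing, the target inequality follows.

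\textbf{Step 3 (corollaries).} Taking $s=t=p'/p\cdot$ appropriately, namely $q=p$ and $s=t$ (so $s'=t'$), shows $c(\Phi_p,\llbracket\cdot\rrbracket_p)\le t\,c(\Phi_{pt},\llbracket\cdot\rrbracket_{pt})$, i.e., $p\,c(\Phi_p,\llbracket\cdot\rrbracket_p)$ is increasing. The two monotonicity statements in $p$ alone and in $q$ alone do not need the Mazur map. They follow directly from H\"older/Jensen on the finite measure $\eta$: $\eta(E)^{-1/p}\Phi_p(f)$ is increasing in $p$, and $\eta(E)^{1/q}\llbracket f\rrbracket_q^{-1}$ is decreasing in $q$. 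These hold pointwise in $f$, and then one passes to inf–sup. The set-pair version repeats everything with $\widehat\psi_e$ and $\|\cdot\|_{\psi_e}$, where Step 1 is actually simpler since \eqref{eq:choquet-pair} only integrates over $(0,\infty)$. The main obstacle I expect is Step 1's handling of signs and the definition of the intrinsic norm, ensuring the bound $\sigma^{t-1}\le|f|^{t-1}_{\varphi_e}$ applies on both tails.
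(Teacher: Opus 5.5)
Your proposal is correct and follows the paper's proof essentially step for step. The shared steps are the change of variables $x=|y|^{t-1}y$ in the Choquet integral with the bound $|y|^{t-1}\le |f|_{\varphi_e}^{t-1}$, H\"older with exponents $(s,s')$ together with $p(t-1)s'=qt$, and the min-max transfer via Mazur invariance of $\mathcal{Y}$. Your one-sided use of $Y^t\in\mathcal{Y}$ is all that step needs. The proof then specializes to $q=p$, $s=t$, applies power-mean monotonicity for the $\eta(E)$-normalized statements, and repeats verbatim for $\Psi$. The only case you leave implicit is $t=1$ (so $t'=\infty$, forcing $s=1$ or $p=\infty$), where the inequality is trivial, as the paper notes.
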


\begin{remark}
We can rephrase \eqref{eq:c-monotone1} as follows: 
for any fixed $q\ge 1$, take $p=\theta q\ge 1$; if $0<\theta\le1$, then it follows from \eqref{eq:c-monotone1} that the function $[1,\infty)\ni t\mapsto t\cdot c(\Phi_{\frac{q\theta t}{\theta+(1-\theta)t}},\llbracket \cdot\rrbracket_{qt})$ is nondecreasing; similarly, if $1<\theta<\infty$, then $[1,\frac{\theta}{\theta-1})\ni t\mapsto t\cdot c(\Phi_{\frac{q\theta t}{\theta-(\theta-1)t}},\llbracket \cdot\rrbracket_{qt})$ is nondecreasing.  Therefore, \eqref{eq:c-monotone1} in Theorem \ref{thm:critical-inequality} is essentially a monotonicity inequality. 
\end{remark}

Theorem \ref{thm:critical-inequality} encompasses a number of inequalities—torsional rigidity, packing radius, relatively isoperimetric, bipartiteness ratio, and signed conductance—each of which is valid on a broad class of underlying structures, such as graphs, hypergraphs, graphons, graphings, Markov spaces, other graph limits, Euclidean domains, Riemannian manifolds, and metric measure spaces.


\vspace{0.19cm}

Note that Definitions \ref{def:Phi-integration}, \ref{def:Psi-integration} and \ref{def:global-extension-cons} involve two measure spaces $(J,\mu)$ and $(E,\eta)$. 
If $J$ and $E$ satisfy certain conditions, then Theorem \ref{thm:critical-inequality}  can be strengthened. 
First, we propose the following general setting.
\begin{defn}[Finitely related structure]\label{def:hyper-degree-related}
We say that two measurable spaces $J$ and $E$ form a \emph{finitely related structure} if there exists a correspondence $E\rightrightarrows J$ such that every $e\in E$ corresponds to a finite number of elements in $J$. 
Denote by $e_J:=\{x\in J:e\text{ corresponds to }x\}$. 

In addition, suppose that $J$ and $E$ form a finitely related structure. Let $\mu$ and $\eta$ be measures on $J$ and $E$, respectively. Assume there exists a degree function $\deg:J\to[0,+\infty)$ such that for any test function $g\in\Bd$,
\begin{equation}\label{eq:degree-related}
\int_E \sum_{x\in e_J} g(x)d\eta(e)=\int_J\deg(x) g(x)d\mu(x) .   
\end{equation}
Then we say that $J$ and $E$ are \emph{degree-related}.
\end{defn} 

Intuitively, if $J$ and $E$ form a finitely related structure, then a typical degree function can be defined as $\overline{\deg}(x)=\overline{\eta}\{e\in E:e_J\ni x\}$, where $\overline{\eta}$ is a certain Borel measure on $E$ such that \eqref{eq:degree-related} holds when taking $\deg=\overline{\deg}$ and $\eta=\overline{\eta}$. 
Below, we present four simple examples that satisfy properties introduced in Definition \ref{def:hyper-degree-related}.
\begin{example}[hypergraph]
For a standard hypergraph $(V,E)$ with vertex set $V$ and hyperedge set $E$, the measures $\eta$ and $\mu$ are the counting measures on $E$ and $V$, respectively. The standard degree is defined as $\deg(x)=\#\{e\in E:x\in e\}$. 
\end{example}

\begin{example}[graphon]
Let $W:J\times J\to [0,1]$ be a graphon. Let $E=J\times J$ be equipped with a measure $\eta$ defined by $\eta(S):=\int_{S}W(x,y)dxdy$ for any Borel subset $S$ of $J\times J$. 
And the measure $\mu$ is defined as $\mu(B):=\eta(B\times J)$ for any $B\in\B$.  In this case, one can use the 
degree function $\deg(x)=2\int_J W(x,y)dy$ for any $x\in J$. 
\end{example}

\begin{example}[graphing]
Let $(J,\lambda,E,\deg)$ be a graphing. Then for any Borel set $A\times B$ on $E$, $\eta(A\times B)=\int_{B}\deg_A(x)d\lambda(x)$ and for any $A\in\B$,  $\mu(A)=\eta(A\times J)$. 
Then, the commonly used degree $\deg(x)=\#\{e\in E:x\in e\}$ is available. 
\end{example}

\begin{example}[Markov space]Let $(J\times J,\B\times\B,\eta)$ be a Markov space. Taking $E=J\times J$, let $\mu$ be the marginal measure of $\eta$, i.e., $\mu(A)=\eta(A\times J)=\eta(J\times A)$ for any $A\in\B$. In this case, $\int_{J\times J}(g(x)+g(y))d\eta(x,y)=\int_J 2g(x)d\mu(x)$ meaning that $\deg(x)=2$ for $\mu$-a.e. $x\in J$. 
For the definition of Markov space, see \cite{KLS24}. 
\end{example}

We further propose the following definition:

\begin{defn}\label{def:hyper-phi-concentrate}
Suppose that $J$ and $E$ form a finitely related structure.  
Given $e\in E$, we say that $\varphi_e$ is \emph{concentrated} on $e$ if $\varphi_e(B)=\varphi_e(B\cap e_J)$ for all $B\in\B$;  
we say that $\psi_e$ is \emph{concentrated} on $e$ if $\psi_e(B,B')=\psi_e(B\cap e_J,B'\cap e_J)$ for all $(B,B')\in\B_2$. 
\end{defn}

We note that Markov spaces fully   satisfy the above degree-related and concentration conditions (Definitions \ref{def:hyper-degree-related} and 
\ref{def:hyper-phi-concentrate}), but manifolds do not. 
Therefore, notably, Definitions \ref{def:hyper-degree-related} and \ref{def:hyper-phi-concentrate} also give a fundamental difference between graph limits and manifolds. 

One can then consider the following $L^p$-norm defined in the form of $$\widetilde{\big\llbracket   f\big\rrbracket}_p=\Big(\int_J\widetilde{\deg}(x)|f(x)|^pd\mu(x)\Big)^{\frac1p}$$ 
with $\widetilde{\deg}:J\to \R$ being an admissible degree function satisfying the conditions in Definition \ref{def:hyper-degree-related}. 
Moreover, we have the global extension constants $c(\Phi_p,\widetilde{\llbracket \cdot\rrbracket}_{q})$ which satisfy the following stronger extended monotonicity inequality.
\begin{theorem}[Extended Monotonicity Inequality]\label{thm:critical-inequality2}
Suppose that $J$ and $E$ are degree-related with $|e_J|\le 2$ for any $e\in E$, and $\varphi_e$ is concentrated on $e$, as well as $C_\Phi:=\sup_{e\in E}\|\varphi_e\|_\infty<\infty$.  
Then, $$r\mapsto \big(\frac{1}{2C_\Phi}c(\Phi_{pr},\widetilde{\llbracket \cdot\rrbracket}_{qr})\big)^r$$ is decreasing with respect to $r\ge 1$. And \eqref{eq:c-monotone1} in Theorem \ref{thm:critical-inequality} can be strengthened as
$$(2C_\Phi)^{1-t}c^t(\Phi_{pt},\widetilde{\llbracket \cdot\rrbracket}_{qt})\le c(\Phi_p,\widetilde{\llbracket \cdot\rrbracket}_{q})\le t\cdot  c(\Phi_{ps},\widetilde{\llbracket \cdot\rrbracket}_{qt}) $$
and this inequality still holds when we replace the condition $|e_J|\le2$ with $p=q=1$. 
In addition, $p\mapsto \eta(E)^{-1/p}c(\Phi_p,\widetilde{\llbracket \cdot\rrbracket}_{q})$ increases, and $q\mapsto \eta(E)^{1/q}c(\Phi_p,\widetilde{\llbracket \cdot\rrbracket}_{q})$ decreases.  

All of the above properties still hold for $c(\Psi_p,\widetilde{\llbracket \cdot\rrbracket}_{q})$ or  $c(\Psi_p,\llbracket \cdot\rrbracket_{q})$ or  $c(\Phi_p,\llbracket \cdot\rrbracket_{q})$. 
\end{theorem}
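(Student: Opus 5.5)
The plan is to push the admissible families $\mathcal{Y}$ through Mazur maps $f\mapsto f^r:=\mathrm{sign}(f)|f|^r$, and to compare the two functionals \emph{pointwise in $e\in E$}. The upper inequality $c(\Phi_p,\widetilde{\llbracket \cdot\rrbracket}_{q})\le t\, c(\Phi_{ps},\widetilde{\llbracket \cdot\rrbracket}_{qt})$ is Theorem \ref{thm:critical-inequality}, with $\llbracket\cdot\rrbracket$ replaced by the degree-weighted norm. Its proof should transfer verbatim: one writes $\widehat{\varphi}_e(f^t)$ via the layer-cake formula, uses $|a^t-b^t|\le t\,(|a|^{t-1}+|b|^{t-1})|a-b|/2$-type bounds, and applies H\"older with exponents $s,s'$ and $t,t'$ (the relation $ps'=qt'$ matching the exponents). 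So the new content is the left inequality, the monotonicity of $r\mapsto \big(c(\Phi_{pr},\widetilde{\llbracket \cdot\rrbracket}_{qr})/(2C_\Phi)\big)^r$, and the $\eta(E)$-monotonicity statements.

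\textbf{Step 1: explicit form of $\widehat\varphi_e$.} Since $\varphi_e$ is concentrated on $e$ with $e_J=\{x,y\}$, and $\varphi_e(J)=\varphi_e(\emptyset)=0$, the level sets only matter through $\{f\ge\tau\}\cap\{x,y\}$. Formula \eqref{eq:choquet} then gives
\[
\widehat\varphi_e(f)=\varphi_e(\{x\})\,(f(x)-f(y))_+ +\varphi_e(\{y\})\,(f(y)-f(x))_+ .
\]
In particular, $\widehat\varphi_e(f)=\alpha_e\,|f(x)-f(y)|$ with $0\le\alpha_e\le C_\Phi$, where $\alpha_e$ depends on the sign of $f(x)-f(y)$. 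This sign is preserved by any Mazur map, since these are increasing.

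\textbf{Step 2: pointwise inequality.} For $r\ge1$ and reals $a,b$, the elementary inequality $|a^{1/r}-b^{1/r}|^r\le 2^{r-1}|a-b|$ holds. The worst case is opposite signs, handled by concavity of $u\mapsto u^{1/r}$ on $[0,\infty)$. Combining it with Step 1 gives
\[
\widehat\varphi_e(f^{1/r})^{pr}=\alpha_e^{pr}|a^{1/r}-b^{1/r}|^{pr}\le \alpha_e^{p(r-1)}2^{p(r-1)}\,\alpha_e^{p}|a-b|^p\le (2C_\Phi)^{p(r-1)}\widehat\varphi_e(f)^p .
\]
Integrating over $E$ yields $\Phi_{pr}(f^{1/r})^r\le (2C_\Phi)^{r-1}\Phi_p(f)$. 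For the denominator, the degree-weighted norm is exactly Mazur-compatible: $\widetilde{\llbracket f^{1/r}\rrbracket}_{qr}^{\,r}=\widetilde{\llbracket f\rrbracket}_q$. Hence
\[
\Big(\frac{\Phi_{pr}(g)}{\widetilde{\llbracket g\rrbracket}_{qr}}\Big)^r\le (2C_\Phi)^{r-1}\frac{\Phi_p(f)}{\widetilde{\llbracket f\rrbracket}_q}\quad\text{for } g=f^{1/r}.
\]
Since $\mathcal{Y}$ is admissible, $Y\mapsto Y^{1/r}$ maps $\mathcal{Y}$ into itself. Taking the supremum over $f\in Y$ and then the infimum over $Y$ gives $(2C_\Phi)^{1-t}c^t(\Phi_{pt},\widetilde{\llbracket\cdot\rrbracket}_{qt})\le c(\Phi_p,\widetilde{\llbracket\cdot\rrbracket}_q)$. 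Applying the same argument with $(p,q)$ replaced by $(pr,qr)$ and $t=r'/r$ gives the monotonicity in $r$. Finally, the $\eta(E)$-statements follow from Jensen/H\"older on the probability measure $\eta/\eta(E)$, applied inside the sup and the inf.

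\textbf{Step 3: the case $p=q=1$ with general finite $e_J$.} This is the main obstacle, because Step 1 breaks down and $\widehat\varphi_e$ is a genuine Choquet integral over $|e_J|$ points. I would order $e_J$ by the values of $f$ and write $\widehat\varphi_e(f)=\sum_i \varphi_e(S_i)(v_i-v_{i+1})$ as a sum over consecutive gaps, with each coefficient in $[0,C_\Phi]$. The ordering, and hence the coefficients, is the same for $f^{1/r}$. The pointwise Step 2 bound then does not directly apply to a sum of gaps. When $p=1$, however, the power $r$ falls only on the whole integrand, and I expect the bound to close by estimating each gap separately, using $\sum_i(v_i^{1/r}-v_{i+1}^{1/r})\le$ (total range) together with a convexity or H\"older step across gaps. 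I am least sure of this step, and expect to need care with the constant $2C_\Phi$.

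The $\Psi$ and non-tilde versions should follow the same scheme, using the two-point formula for the disjoint-pair extension, which is $\alpha_e|f(x)\pm f(y)|$-type.
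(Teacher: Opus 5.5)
Your two-point lower bound (Steps 1--2) is correct for $|e_J|\le 2$. It coincides with the paper's argument (Proposition~\ref{pro:phi-cpq21-r} with $|I_e|=1$), read through the inverse Mazur map. Your $|a^{1/r}-b^{1/r}|^r\le 2^{r-1}|a-b|$ is the paper's chain $\big||b|^{r-1}b-|a|^{r-1}a\big|\ge|b-a|\big(\tfrac{|a|^r+|b|^r}{2}\big)^{1-1/r}\ge 2^{1-r}|b-a|^r$, and $\alpha_e^{pr}\le C_\Phi^{p(r-1)}\alpha_e^p$ is its $\varphi_e^p\ge\varphi_e^{rp}/\|\varphi_e\|_\infty^{rp-p}$.

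The genuine gap is Step~3, which you leave open, and it also affects your $\Psi$ claim. The two-point disjoint-pair extension is not of the form $\alpha_e|f(x)\pm f(y)|$. For $|f(x)|\ge|f(y)|$ it equals $\psi_e(P_1)\big(|f(x)|-|f(y)|\big)+\psi_e(P_2)|f(y)|$ with two unrelated coefficients (e.g.\ $\max\{|f(x)|,|f(y)|\}$ occurs), so there are two gaps even when $|e_J|=2$.

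The per-$e$ inequality you need is $\widehat\varphi_e(g)^r\le(2C_\Phi)^{r-1}\widehat\varphi_e(g^r)$, and it is the same for every $p$. So $p=1$ gives you nothing on your route. Bounding gap by gap and recombining by H\"older or a power mean loses a factor that grows with the number of gaps; this is exactly how the paper's general bound picks up $C_{JE}^{r-1/p}$.

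The paper uses $p=q=1$ differently. By the layer-cake argument of Theorem~\ref{thm:tilde-fg-equal}, $c(\Phi_1,\widetilde{\llbracket\cdot\rrbracket}_1)$ can be computed on indicator functions. For these, $s\mapsto\varphi_e\{f\ge s\}$ has a single nonzero level, so $|I_e|=1$ and the one-gap estimate applies.

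A fix that is pointwise in $f$, and removes both $|e_J|\le 2$ and $p=q=1$ from the lower bound, is to treat all gaps at once.
\begin{itemize}
\item Since $\varphi_e(J)=\varphi_e(\emptyset)=0$, set $h_e(\tau):=\varphi_e\{g\ge\tau\}\in[0,C_\Phi]$. Then $\widehat\varphi_e(g)=\int_{\R}h_e$ and $\widehat\varphi_e(g^r)=\int_{\R}r|\tau|^{r-1}h_e(\tau)\,d\tau$.
\item The weight is even and nondecreasing in $|\tau|$. By the bathtub principle, the minimum over $0\le h\le C_\Phi$ with $\int h=M$ is attained at $h=C_\Phi 1_{[-M/(2C_\Phi),\,M/(2C_\Phi)]}$.
\item This gives $\widehat\varphi_e(g^r)\ge(2C_\Phi)^{1-r}\widehat\varphi_e(g)^r$, after which your Step~2 goes through verbatim.
\item On $(0,\infty)$ with weight $r\tau^{r-1}$, the same argument gives $\widehat\psi_e(g^r)\ge C_\Psi^{1-r}\widehat\psi_e(g)^r$, where $C_\Psi:=\sup_e\|\psi_e\|_\infty$.
\end{itemize}

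Smaller points. The upper inequality is not a verbatim transfer of Theorem~\ref{thm:critical-inequality}. Its H\"older step produces $\int_E|f|_{\varphi_e}^{qt}\,d\eta$, and the one new ingredient (the paper's Case~1) is $|f|_{\varphi_e}\le\max_{x\in e_J}|f(x)|$. This follows from concentration and $\varphi_e(J)=\varphi_e(\emptyset)=0$, and is then combined with $\max\le\sum$ and \eqref{eq:degree-related}. Also, the bound $|a^t-b^t|\le t\,\tfrac{|a|^{t-1}+|b|^{t-1}}{2}|a-b|$ you allude to is false for $1<t<2$ (take $a=1$, $b=0$); use $t\max\{|a|,|b|\}^{t-1}|a-b|$ instead.

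For the $\eta(E)$ statements, Jensen on $\eta/\eta(E)$ handles $\Phi_p$. However, $\widetilde{\llbracket\cdot\rrbracket}_q$ is an $L^q$ norm on $(J,\widetilde{\deg}\,\mu)$, whose total mass is $\int_E|e_J|\,d\eta$ rather than $\eta(E)$. So the $q$-claim needs $\int_J\widetilde{\deg}\,d\mu=\eta(E)$, e.g.\ $\widetilde{\deg}=\deg/2$, the $p$-mean normalization of Example~\ref{exam:nonlocal-graphon}. With $\widetilde{\deg}=\deg$ the claim fails. Take a single edge $J=\{x,y\}$ with $\eta(E)=1$, $\varphi_e(A)=1$ iff $\#(A\cap\{x,y\})=1$, and $\mathcal{Y}=\{\Bd\setminus\{0\}\}$. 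Then $c(\Phi_p,\widetilde{\llbracket\cdot\rrbracket}_q)=2^{1-1/q}$, which increases in $q$.
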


\noindent\textbf{Scope of Application}: 
Theorems \ref{thm:critical-inequality} and \ref{thm:critical-inequality2} prove a monotonicity inequality that applies not only to many quantities (such as conductance and torsional rigidity) but also to various types of ambient spaces, including Riemannian manifolds and graph limits. 
This monotonicity inequality naturally gives rise to many useful inequalities (see Section \ref{sec:Applications}); therefore, we can regard it as an automatic generator of many important inequalities.

We refer to Section \ref{sec:Applications} 
for some applications of Theorems \ref{thm:critical-inequality} and \ref{thm:critical-inequality2}, and to Section \ref{sec:proof} for their proofs.

\section{Applications}
\label{sec:Applications}
In this section, we combine Theorems \ref{thm:tilde-fg-equal}, \ref{thm:critical-inequality} and \ref{thm:critical-inequality2} to several examples to get useful inequalities on graph limits, hypergraphs, Riemannian manifolds, and metric measure spaces.
\subsection{$L^p$-Cheeger inequalities on graph limits}\label{subsec:Cheeger-monotonicity}

\begin{example}[Cheeger inequality on graphon] \label{exam:nonlocal-graphon}
Given a graphon $W:J\times J\to [0,1]$, 
let $$\varphi_{xy}(A)=\begin{cases}
1,&\text{ if } \#\big(\{x,y\}\cap A\big)=1,\\
0,&\text{ otherwise}.
\end{cases}
$$ and  
$\psi_{xy}(A,A')=
\varphi_{xy}(A)+\varphi_{xy}(A')
$. 
Then $\widehat{\varphi}_{xy}(f)=\widehat{\psi}_{xy}(f)=
|f(x)-f(y)|$. 
Let $E=J\times J$ be equipped with the measure $\eta$ defined by   $\eta(S):=\int_SW(x,y)dxdy$. 

In this case, $|f|_{\varphi_{xy}}=\|f\|_{\psi_{xy}}=\max(|f(x)|,|f(y)|)$, but since $J$ and $E$ are degree-related,  we can actually replace the term $|f|_{\varphi_{xy}}$ by a better quantity $(\frac{|f(x)|^p+|f(y)|^p}{2})^{\frac1p}$ called $p$-mean (this is equivalent to directly using $\widetilde{\llbracket \cdot\rrbracket}_p$ instead of $\llbracket \cdot\rrbracket_p$). 
Then we have
$$\Phi_p(f)=\int_{J\times J}W(x,y)|f(x)-f(y)|^pdxdy\text{ and }\widetilde{\llbracket f\rrbracket}_p=\int_J \deg(x)|f(x)|^pdx. $$
The $\mathcal{Y}_2$-type min-max value $c_2(\Phi_p,\widetilde{\llbracket \cdot\rrbracket}_p)$ unifies many quantities, e.g. the Cheeger constant $h_W=c_2(\Phi_1,\widetilde{\llbracket \cdot\rrbracket}_1)$ (due to Theorem \ref{thm:tilde-fg-equal} and Example \ref{example:conductance-graphon}, as well as the kernel reduction lemma in \cite{zhang/dual}), and the second smallest eigenvalue of graphon Laplacian $\lambda_2(L_W)=\big(c_2(\Phi_2,\widetilde{\llbracket \cdot\rrbracket}_2)\big)^2$.  Therefore, in the setting of Example \ref{exam:nonlocal-graphon}, the global extension constants can be precisely rewritten as 
\begin{equation}\label{eq:Cheeger-c2pp}
c_2(\Phi_p,\widetilde{\llbracket \cdot\rrbracket}_p)=\inf_{\text{nonconstant }f\in \Bd}\Big(\frac{\int_J\int_JW(x,y)|f(x)-f(y)|^pdxdy}{\min\limits_{t\in\R}\int_{J} \deg(x)|f(x)-t|^pdx}\Big)^{\frac1p}.
\end{equation}

We can simply apply Theorem \ref{thm:critical-inequality2} to get the following inequality regarding \eqref{eq:Cheeger-c2pp}. 
\begin{theorem}\label{thm:Cheeger-graphon}
For any $p\in[1,\infty]$ and $r\in[1,\infty)$, we have \begin{equation}\label{eq:p-Cheeger}
2^{1-r}c_2(\Phi_{pr},\widetilde{\llbracket \cdot\rrbracket}_{pr})^r\le c_2(\Phi_p,\widetilde{\llbracket \cdot\rrbracket}_p)\le r\cdot c_2(\Phi_{pr},\widetilde{\llbracket \cdot\rrbracket}_{pr})    
\end{equation}    
\end{theorem}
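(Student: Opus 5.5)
The plan is to derive Theorem \ref{thm:Cheeger-graphon} as a direct specialization of Theorem \ref{thm:critical-inequality2}, in the setting of Example \ref{exam:nonlocal-graphon} with $q=p$ and admissible family $\mathcal{Y}=\mathcal{Y}_2$. First we check the hypotheses of Theorem \ref{thm:critical-inequality2}. Put $E=J\times J$ with $\eta(S)=\int_S W$, and let each $e=(x,y)$ correspond to $e_J=\{x,y\}$, so that $|e_J|\le 2$. The identity $\int_E (g(x)+g(y))\,d\eta=\int_J \deg(x)g(x)\,dx$ with $\deg(x)=2\int_J W(x,y)dy$ shows that $J$ and $E$ are degree-related in the sense of Definition \ref{def:hyper-degree-related}. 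By definition, $\varphi_{xy}(A)$ depends only on $A\cap\{x,y\}$, so it is concentrated on $e$ in the sense of Definition \ref{def:hyper-phi-concentrate}. Moreover $\varphi_{xy}$ takes only the values $0$ and $1$, hence $C_\Phi=\sup_e\|\varphi_e\|_\infty=1$ and $2C_\Phi=2$. We also use that $\mathcal{Y}_2$ is admissible, as noted after \eqref{eq:Y_k}.

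With these checks in place, Theorem \ref{thm:critical-inequality2} with $q=p$ and $t=r$ gives
\[
2^{1-r}c_2(\Phi_{pr},\widetilde{\llbracket \cdot\rrbracket}_{pr})^r\le c_2(\Phi_p,\widetilde{\llbracket \cdot\rrbracket}_{p})\le r\cdot c_2(\Phi_{ps},\widetilde{\llbracket \cdot\rrbracket}_{pr}),
\]
where $s$ is determined by $ps'=qt'$. With $q=p$ this reads $s'=r'$, so $s=r$, and the right-hand side becomes $r\cdot c_2(\Phi_{pr},\widetilde{\llbracket \cdot\rrbracket}_{pr})$. This is exactly \eqref{eq:p-Cheeger}. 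The case $r=1$ is trivial, since both sides coincide.

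The remaining points are normalization and the endpoint $p=\infty$. The displayed formula for $\Phi_p$ in Example \ref{exam:nonlocal-graphon} omits the $1/p$-th powers, so I would verify that the constants $c_2$ are computed with the genuinely one-homogeneous functionals of Definitions \ref{def:Phi-integration} and \ref{def:global-extension-cons}. I would also confirm that replacing $|f|_{\varphi_{xy}}$ by the $p$-mean, i.e.\ using $\widetilde{\llbracket\cdot\rrbracket}$, is covered by Theorem \ref{thm:critical-inequality2}, which is stated for $\widetilde{\llbracket\cdot\rrbracket}$ directly. For $p=\infty$ the inequality should follow by letting $p\to\infty$ at fixed $r$, or by reading both sides as $L^\infty$ quantities, for which $pr=\infty$ and the estimate is formal.

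I expect the only delicate step to be the endpoint $p=\infty$. My first attempt would be the limit argument, using the monotonicity in $p$ of $\eta(E)^{-1/p}c(\Phi_p,\cdot)$ from Theorem \ref{thm:critical-inequality2} to justify passing to the limit.
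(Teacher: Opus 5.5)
Your proposal is correct and follows the paper's own route: the paper obtains this theorem simply by invoking Theorem~\ref{thm:critical-inequality2} in the setting of Example~\ref{exam:nonlocal-graphon}, with $q=p$ and $t=s=r$, exactly as you do. Its hypotheses are the ones you check: degree-related with $|e_J|\le 2$, $\varphi_{xy}$ concentrated on $\{x,y\}$, and $C_\Phi=1$.

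For the endpoint $p=\infty$ you do not need a limit argument. The fixed-$q$ monotonicity you cite would not justify passing to the limit along $p=q\to\infty$ anyway. At $p=\infty$ all three terms involve the same constant $c=c_2(\Phi_\infty,\widetilde{\llbracket \cdot\rrbracket}_\infty)$. The right inequality is then just $c\le rc$. The left one follows from $c\le 2$, which holds because $|f(x)-f(y)|\le 2\max\{|f(x)|,|f(y)|\}$ $\eta$-a.e.
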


\end{example}

By taking different values for $p$ and $r$, we obtain different inequalities, for example, taking $p=1$ and $r=2$, we have the following:
\begin{corollary}\label{cor:WJ}
Let $W:J\times J\to [0,\infty)$ be a $L^2$-graphon, $h_W$ be its Cheeger constant, and $\lambda_2(L_W)$ be the second smallest eigenvalue of the normalized Laplacian on $W$. Then
$$ \frac{h^2_W}{4} \le \lambda_2(L_W) \le 2h_W.$$
\end{corollary}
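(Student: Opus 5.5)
Corollary \ref{cor:WJ} follows by specializing Theorem \ref{thm:Cheeger-graphon} to $p=1$, $r=2$ and then translating the two global extension constants into the classical quantities, as already recorded in Example \ref{exam:nonlocal-graphon}.

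\emph{Step 1: specialize.} Taking $p=1$ and $r=2$ in \eqref{eq:p-Cheeger} gives
$$\tfrac12\, c_2(\Phi_2,\widetilde{\llbracket \cdot\rrbracket}_2)^2\le c_2(\Phi_1,\widetilde{\llbracket \cdot\rrbracket}_1)\le 2\, c_2(\Phi_2,\widetilde{\llbracket \cdot\rrbracket}_2).$$

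\emph{Step 2: identify the constants.} I will use the two identifications from Example \ref{exam:nonlocal-graphon}:
\begin{itemize}
\item $h_W=c_2(\Phi_1,\widetilde{\llbracket \cdot\rrbracket}_1)$, which comes from Theorem \ref{thm:tilde-fg-equal}, Example \ref{example:conductance-graphon} and the kernel reduction lemma;
\item $\lambda_2(L_W)=c_2(\Phi_2,\widetilde{\llbracket \cdot\rrbracket}_2)^2$, via \eqref{eq:Cheeger-c2pp} with $p=2$. There the minimizing $t$ is the $\deg$-weighted mean, so the right-hand side is the Rayleigh quotient of the normalized Laplacian restricted to functions orthogonal to constants in $L^2(\deg)$. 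The $\mathcal{Y}_2$ (genus $\ge2$) min-max value is then the second variational eigenvalue by the standard Courant–Fischer/Lusternik–Schnirelmann equivalence for quadratic forms.
\end{itemize}

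\emph{Step 3: normalize and read off the bounds.} The normalization needs care. Depending on whether $\Phi_p$ is read with or without the $1/p$ root, and on the factor $2$ in $\deg$ in the degree-related setting, the constants shift by powers of $2$. I will fix the conventions so that $c_2(\Phi_2,\cdot)^2$ is exactly $\lambda_2(L_W)$ for $L_W f=f-\frac{1}{\deg}\int W(\cdot,y)f(y)\,dy$, with the factor of $2$ from the symmetric double integral absorbed appropriately. With these conventions:
\begin{itemize}
\item the right inequality of Step 1, after squaring, gives $h_W^2\le 4\lambda_2$, i.e.\ $h_W^2/4\le\lambda_2(L_W)$;
\item the left inequality gives $\tfrac12\lambda_2\le h_W$, i.e.\ $\lambda_2(L_W)\le 2h_W$.
\end{itemize}

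\emph{Step 4: extend to $L^2$-graphons.} Theorem \ref{thm:critical-inequality2} needs $C_\Phi=\sup_e\|\varphi_e\|_\infty<\infty$. For $\varphi_{xy}$ this holds with $C_\Phi=1$ regardless of how large $W$ is, since the weight enters only through $\eta$. So unbounded nonnegative $W$ is allowed, provided $\deg$ is finite a.e. and $W\in L^2$ makes $L_W$ and the Rayleigh quotient well defined. Functions in $\Bd$ are dense in $L^2(\deg)$, so the infimum over $\Bd$ equals the spectral $\lambda_2$.

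The main obstacle is the bookkeeping of constants in Step 3. I expect it to be resolved by checking a two-point example: a single edge, where $h=1$ and $\lambda_2=2$, attains $\lambda_2=2h$.
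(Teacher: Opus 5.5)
Your Steps 1, 2 and 4 are the paper's argument: specialize Theorem~\ref{thm:Cheeger-graphon} at $p=1$, $r=2$, use the identifications of Example~\ref{exam:nonlocal-graphon}, and observe that boundedness of $W$ is never used. The read-off in Step 3 is also correct arithmetic. What does not work is the claim that the normalization can be fixed at will and then checked on a single edge.

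Under $\widetilde{\deg}\mapsto\kappa\,\widetilde{\deg}$, the constant $c_2(\Phi_1,\widetilde{\llbracket\cdot\rrbracket}_1)$ scales by $\kappa^{-1}$, while $c_2(\Phi_2,\widetilde{\llbracket\cdot\rrbracket}_2)$ scales by $\kappa^{-1/2}$. So the left half of Step 1 (which gives $\lambda_2\le 2h_W$) is scale-free. The right half (which gives $h_W^2/4\le\lambda_2$) survives enlarging the weight but not shrinking it, so it holds only at weights where it has actually been proved. The single edge is tight only in $\lambda_2\le 2h$ and satisfies $h^2/4\le\lambda_2$ with room to spare under every normalization, so it cannot detect this.

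Concretely, Steps 2 and 3 are inconsistent, and there are two consistent choices.

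\emph{The paper's weight.} Step 2 uses the $p$-mean weight $\int_J W(\cdot,y)\,dy$ of Example~\ref{exam:nonlocal-graphon}, for which $c_2(\Phi_1,\cdot)=h_W$ with $\mathrm{cut}(S)=2\int_S\int_{J\setminus S}W$. With that weight, however, $c_2(\Phi_2,\cdot)^2$ is \emph{twice} the second eigenvalue of $f-\frac{1}{\deg}\int_J W(\cdot,y)f(y)\,dy$. Moreover, the right half of Step 1 then does not follow from the basic bound $\max_{x\in e_J}\le\sum_{x\in e_J}$. It needs the refinement of Proposition~\ref{pro:phi-rKcpq} with $C_{JE}=1$ (the remark after it allowing $\frac12\deg$). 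In standard units this choice gives the sharp classical $h^2/2\le\lambda_2\le 2h$.

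\emph{Your $L_W$.} To make $c_2(\Phi_2,\cdot)^2$ equal the eigenvalue of your $L_W$, you must use the degree-related weight $2\int_J W(\cdot,y)\,dy$ of Definition~\ref{def:hyper-degree-related}. There the basic bound suffices. But $c_2(\Phi_1,\cdot)$ becomes the standard conductance $\inf_S \int_S\int_{J\setminus S}W\,/\min\{\vol(S),\vol(J\setminus S)\}$, which equals $h_W/2$. This choice gives only $h^2/4\le\lambda_2\le 2h$.

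What you may not do is pair Step 2's $h_W$ with Step 3's $L_W$. That would assert $h_{\mathrm{std}}^2\le\lambda_{2,\mathrm{std}}$, which nothing here proves. The simplest repair is to drop the re-normalization and keep the paper's conventions throughout, noting that the right half of Step 1 then rests on Proposition~\ref{pro:phi-rKcpq}.
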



The above theorem and corollary cover 
the main results in \cite{KM24}. In fact, the first one of the two main results in \cite{KM24} considers the bounded graphon $W:[0,1]^2\to[0,1]$, which is a special case of Corollary \ref{cor:WJ}.

\begin{example}[Cheeger inequality on graphing]
Given a graphing $G=(J,E)$, 
then, similar to the graphon case (Example \ref{exam:nonlocal-graphon}), it is easy to check $\widehat{\varphi}_{xy}(f)=|f(x)-f(y)|$ for any $f\in\Bd$. Then we have
$$\Phi_p(f)=\int_E|f(x)-f(y)|^pd\eta(x,y)\text{ and }\widetilde{\llbracket f\rrbracket}_p=\int_J \deg(x)|f(x)|^pd\mu(x). $$
and thus in the case,
\begin{equation}\label{eq:Cheeger-c2pp-ing}
c_2(\Phi_p,\widetilde{\llbracket \cdot\rrbracket}_p)=\inf_{\text{nonconstant }f\in \Bd}\Big(\frac{\int_E|f(x)-f(y)|^pd\eta(x,y)}{\min\limits_{t\in\R}\int_{J} \deg(x)|f(x)-t|^pdx}\Big)^{\frac1p}
\end{equation}
Again, we  apply Theorem \ref{thm:critical-inequality2} to derive the following
\begin{theorem}\label{thm:Cheeger-graphing}
For any $1\le p\le q <\infty$, we have \begin{equation}\label{eq:p-Cheeger-ing}
\big(\frac{p c_2(\Phi_{p},\widetilde{\llbracket \cdot\rrbracket}_{p})}{q}\big)^q\le c_2^q(\Phi_q,\widetilde{\llbracket \cdot\rrbracket}_q)\le  2^{q-p}c_2^p(\Phi_{p},\widetilde{\llbracket \cdot\rrbracket}_{p})    
\end{equation}    
\end{theorem}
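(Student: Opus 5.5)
Throughout, abbreviate $c_p:=c_2(\Phi_p,\widetilde{\llbracket \cdot\rrbracket}_p)$. Theorem~\ref{thm:Cheeger-graphing} should then follow from Theorem~\ref{thm:critical-inequality2}, applied to the graphing $G=(J,E)$ with $\varphi_{xy}$ as in Example~\ref{exam:nonlocal-graphon}.

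First I check the hypotheses of Theorem~\ref{thm:critical-inequality2}.
\begin{itemize}
\item $J$ and $E$ are degree-related with $|e_J|=|\{x,y\}|\le 2$.
\item Each $\varphi_{xy}$ is concentrated on $\{x,y\}$, since its value depends only on $A\cap\{x,y\}$.
\item $C_\Phi=\sup_{e}\|\varphi_e\|_\infty=1$, so $2C_\Phi=2$.
\end{itemize}
The admissible family is $\mathcal{Y}_2$. Since $c_2$ is defined with $\mathcal{Y}_2$, identity \eqref{eq:Cheeger-c2pp-ing} is only used for interpretation, not in the proof.

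Next I set $r=q/p\ge 1$. The first assertion of Theorem~\ref{thm:critical-inequality2}, with exponents $(p,p)$ in place of $(p,q)$, says that $r\mapsto (\tfrac12 c_{pr})^r$ is decreasing. Comparing $r=1$ with $r=q/p$ gives
\[
\bigl(\tfrac12 c_q\bigr)^{q/p}\le \tfrac12 c_p .
\]
Raising both sides to the power $p$ gives $c_q^q\,2^{-q}\le 2^{-p}c_p^p$, which is $c_q^q\le 2^{q-p}c_p^p$. This is the right-hand inequality of \eqref{eq:p-Cheeger-ing}.

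For the left-hand inequality I use the upper bound $c(\Phi_p,\widetilde{\llbracket\cdot\rrbracket}_q)\le t\, c(\Phi_{ps},\widetilde{\llbracket\cdot\rrbracket}_{qt})$ from Theorems~\ref{thm:critical-inequality}/\ref{thm:critical-inequality2}, with both exponents equal to $p$. I take $s=t=q/p$; the constraint $ps'=qt'$ then reads $p\,s'=p\,t'$, which holds since $s=t$. This gives $c_p\le \tfrac{q}{p}\,c_q$, i.e.\ $\tfrac{p}{q}c_p\le c_q$. Raising to the power $q$ yields $\bigl(\tfrac{p\,c_p}{q}\bigr)^q\le c_q^q$.

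The only point needing care is the exponent bookkeeping in the constraint $ps'=qt'$, which I must instantiate with equal exponents on $\Phi$ and $\widetilde{\llbracket\cdot\rrbracket}$. The condition $q<\infty$ keeps $r$ finite.
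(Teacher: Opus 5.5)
Your proposal is correct and follows the paper's own argument, which simply specializes Theorem~\ref{thm:critical-inequality2} to the graphing ($|e_J|=2$, $\varphi_{xy}$ concentrated on $\{x,y\}$, $C_\Phi=1$) with equal exponents and $r=s=t=q/p$: the lower bound $2^{1-r}c_q^{\,r}\le c_p$ gives the right-hand inequality, and $c_p\le \frac{q}{p}c_q$ gives the left-hand one. One point to keep in mind is that the left-hand inequality is not invariant under rescaling $\deg$, so it relies on $\deg$ being the degree of Definition~\ref{def:hyper-degree-related}, which is the hypothesis you listed.
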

In particular, taking $p=1$ and $q=2$, we have
\begin{corollary}
Let $G$ be a graphing, $h_G$ be its Cheeger constant, and $\lambda_G$ be the second smallest eigenvalue of the normalized Laplacian on $G$. Then
$$ \frac{h^2_G}{4} \le \lambda_G \le 2h_G$$
\end{corollary}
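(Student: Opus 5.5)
The plan is to specialize Theorem~\ref{thm:Cheeger-graphing} to the exponents $p=1$ and $q=2$, and then identify the two global extension constants that appear. With $p=1$, $q=2$, inequality \eqref{eq:p-Cheeger-ing} reads
$$\Big(\frac{c_2(\Phi_1,\widetilde{\llbracket \cdot\rrbracket}_1)}{2}\Big)^2\le c_2^2(\Phi_2,\widetilde{\llbracket \cdot\rrbracket}_2)\le 2\,c_2(\Phi_1,\widetilde{\llbracket \cdot\rrbracket}_1).$$
So the corollary follows once we show two identities: $c_2(\Phi_1,\widetilde{\llbracket \cdot\rrbracket}_1)=h_G$ and $c_2^2(\Phi_2,\widetilde{\llbracket \cdot\rrbracket}_2)=\lambda_G$. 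By \eqref{eq:Cheeger-c2pp-ing}, both constants are given by the closed-form variational expression
$$\inf_{\text{nonconstant } f}\Big(\frac{\int_E|f(x)-f(y)|^p\,d\eta}{\min_t\int_J\deg\,|f-t|^p\,d\mu}\Big)^{1/p}.$$
The proof therefore reduces to identifying this expression for $p=1$ and $p=2$.

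For $p=1$, take $\varphi_1(S)=\mathrm{cut}(S)$, the $\eta$-measure of edges leaving $S$, and $\varphi_2(S)=\min\{\vol(S),\vol(J\setminus S)\}$. Both are nonnegative and vanish on $J$. Their Choquet extensions are computed exactly as in Example~\ref{example:conductance-graphon}, with the graphon measure replaced by the graphing edge measure $\eta$:
\begin{itemize}
\item by the layer-cake formula, $\widehat{\varphi}_1(f)=\int_E|f(x)-f(y)|\,d\eta$;
\item $\widehat{\varphi}_2(f)=\min_t\int_J\deg\,|f-t|\,d\mu$, where the minimum is attained at a weighted median of $f$.
\end{itemize}
The first identity of Theorem~\ref{thm:tilde-fg-equal} then shows that the $p=1$ expression equals $h_G$.

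The main obstacle is showing that the genus-$2$ min-max value $c_2$ coincides with this infimum over nonconstant $f$. The plan is to argue in two directions.
\begin{itemize}
\item \emph{Upper bound.} For any nonconstant $f$, the set $\{a1+bf\}$ restricted to the unit sphere is a symmetric set of genus $2$. On it, $\Phi_p$ is invariant under adding constants, while the denominator $\widetilde{\llbracket\cdot\rrbracket}_p$ is at least its minimum over shifts. This gives $c_2$ at most the infimum.
\item \emph{Lower bound.} For the reverse inequality, I would invoke the kernel reduction lemma of \cite{zhang/dual}, as is done for graphons in Example~\ref{exam:nonlocal-graphon}. It states that every genus-$\ge2$ set contains some $f$ whose optimal shift is $t=0$; that is, $0$ is a median of $f$ when $p=1$, or $\int\deg f\,d\mu=0$ when $p=2$. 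This follows from an odd-map, Borsuk--Ulam type argument applied to the continuous odd function $f\mapsto$ (optimal shift).
\end{itemize}

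For $p=2$, the expression becomes the classical Rayleigh quotient
$$\frac{\int_E|f(x)-f(y)|^2\,d\eta}{\int_J\deg\,(f-\bar f)^2\,d\mu},$$
where $\bar f$ is the $\deg$-weighted mean of $f$. By Courant--Fischer, together with the fact that the bottom eigenvalue $0$ of the normalized Laplacian has the constants as eigenfunctions, its infimum is $\lambda_G$. Here one must match the paper's normalization of $\eta$ (ordered pairs) with that of the Laplacian, so that no stray factor of $2$ enters the identification. Once that convention is fixed, substituting both identities into the displayed inequality gives $h_G^2/4\le\lambda_G\le 2h_G$.
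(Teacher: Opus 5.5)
Your proposal is correct and follows the paper's route. The paper obtains this corollary by setting $p=1$, $q=2$ in Theorem~\ref{thm:Cheeger-graphing}, and it identifies $c_2(\Phi_1,\widetilde{\llbracket \cdot\rrbracket}_1)=h_G$ and $c_2^2(\Phi_2,\widetilde{\llbracket \cdot\rrbracket}_2)=\lambda_G$ as in the graphon case, via Theorem~\ref{thm:tilde-fg-equal}, Example~\ref{example:conductance-graphon} and the kernel reduction lemma of \cite{zhang/dual}, which is exactly what you spell out. One minor detail: for $p=1$ the weighted median need not be unique, so in the genus argument use the midpoint of the median interval as the odd selection; it is $1$-Lipschitz in the sup norm and odd, since $f\mapsto -f$ swaps the interval's endpoints and flips their sign.
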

The above corollary covers 
the second main result in \cite{KM24}.
\end{example}

\subsection{Spectral bounds for maxcut and bipartiteness ratio on graphons, and conductance on signed graphons}\label{subsec:spectral-bound}

\begin{example}[Maxcut on graphon]

In this example, we work on a standard graphon $W:J\times J\to [0,1]$ with $J=[0,1]$, and we follow the setting of Example \ref{exam:nonlocal-graphon}. In this case, for any admissible set-family $\mathcal{Y}\subset2^\Bd$, \begin{equation}\label{eq:sobolev}
c(\Phi_p,\widetilde{\llbracket \cdot\rrbracket}_q):=\inf\limits_{Y\in \mathcal{Y}}\sup\limits_{f\in Y} \frac{\Big(\int_{J\times J}W(x,y)|f(x)-f(y)|^pdxdy\Big)^{\frac1p} }{\big(\int_J \deg(x)|f(x)|^qdx\big)^{\frac1q}}  .  
\end{equation}
When taking $\mathcal{Y}=\mathcal{Y}_k$ or $\mathcal{Y}=\mathcal{Y}_{\sup}:=\{\Bd\setminus\{0\}\}$ in \eqref{eq:sobolev}, these specific global extension constants are exactly the $(p,q)$-Sobolev constants introduced in \cite{Zhang/graphon}. 

By Theorem \ref{thm:tilde-fg-equal} (precisely, its dual version Theorem \ref{th:dual-main}), we have 
\begin{equation}\label{eq:maxcut=p/infty}
c_{\sup}(\Phi_p,\widetilde{\llbracket \cdot\rrbracket}_\infty):=\sup\limits_{f\in \Bd} \frac{\Big(\int_{J\times J}W(x,y)|f(x)-f(y)|^pdxdy\Big)^{\frac1p} }{\|f\|_\infty}   = 2 \big(\mathrm{Maxcut}(W) \big)^{\frac1p}
\end{equation}
which is a generalization of Example \ref{exam:maxcut} (or Corollary \ref{cor:6-equality}) where the maxcut of $W$ is proven to be 
$c_{\sup}(\Phi_1,\widetilde{\llbracket \cdot\rrbracket}_\infty)/2$ with the admissible $\mathcal{Y}$ taken as the single set $\{\Bd\setminus\{0\}\}$. 
Then, Theorem \ref{thm:critical-inequality2} implies 
$c_{\sup}(\Phi_2,\widetilde{\llbracket \cdot\rrbracket}_\infty)\le \|W\|_1^{\frac12}c_{\sup}(\Phi_2,\widetilde{\llbracket \cdot\rrbracket}_2)$. 
Note that with such $\mathcal{Y}$, $c_{\sup}(\Phi_2,\widetilde{\llbracket \cdot\rrbracket}_2)$ indicates the square root of the largest eigenvalue of the  Laplacian on $W$. 
Thus, we have:
\begin{corollary}\label{cor:maxcut-Lapla}
Let $\lambda_{\max}(L_W)$ denote the largest eigenvalue of the  Laplacian on $W$. Then, 
$$4\, \mathrm{MaxCut}(W) \le \|W\|_1\lambda_{\max}(L_W).$$
\end{corollary}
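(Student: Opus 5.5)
The plan is to chain the functional representation \eqref{eq:maxcut=p/infty} (with $p=2$) with a comparison between the $\widetilde{\llbracket\cdot\rrbracket}_\infty$ and $\widetilde{\llbracket\cdot\rrbracket}_2$ normalisations, and then square. I use the admissible family $\mathcal{Y}_{\sup}=\{\Bd\setminus\{0\}\}$ throughout, so every global extension constant is a plain supremum of a Rayleigh-type quotient.

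\textbf{Step 1.} By \eqref{eq:maxcut=p/infty} with $p=2$ (a consequence of the dual version of Theorem \ref{thm:tilde-fg-equal}),
$$2\,\mathrm{MaxCut}(W)^{\frac12}=c_{\sup}(\Phi_2,\widetilde{\llbracket\cdot\rrbracket}_\infty)=\sup_{f\in\Bd\setminus\{0\}}\frac{\Phi_2(f)}{\|f\|_\infty}.$$
Sanity check: for $f=1_S-1_{J\setminus S}$ we have $\|f\|_\infty=1$ and $|f(x)-f(y)|^2=4\cdot 1_{\partial S}(x,y)$, so $\Phi_2(f)^2=4\,\mathrm{cut}(S)$.

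\textbf{Step 2.} I need the inequality quoted just before the corollary,
$$c_{\sup}(\Phi_2,\widetilde{\llbracket\cdot\rrbracket}_\infty)\le\|W\|_1^{\frac12}\,c_{\sup}(\Phi_2,\widetilde{\llbracket\cdot\rrbracket}_2).$$
This is the $q$-monotonicity part of Theorem \ref{thm:critical-inequality2}, but I would check it directly, since that is cleaner and makes the constant transparent. For nonzero $f\in\Bd$, factor
$$\frac{\Phi_2(f)}{\|f\|_\infty}=\frac{\Phi_2(f)}{\widetilde{\llbracket f\rrbracket}_2}\cdot\frac{\widetilde{\llbracket f\rrbracket}_2}{\|f\|_\infty}.$$
The second factor is at most $\big(\int_J\deg(x)\,dx\big)^{1/2}$, because $|f|^2\le\|f\|_\infty^2$ pointwise. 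The first factor is at most $c_{\sup}(\Phi_2,\widetilde{\llbracket\cdot\rrbracket}_2)$. Taking the supremum over $f$ gives the inequality with the constant $\big(\int_J\deg\big)^{1/2}$.

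\textbf{Step 3.} By the statement preceding the corollary, $c_{\sup}(\Phi_2,\widetilde{\llbracket\cdot\rrbracket}_2)^2=\lambda_{\max}(L_W)$. This is the variational (Rayleigh quotient) characterisation of the top of the spectrum of the Laplacian. Squaring the chain from Steps 1 and 2 then yields
$$4\,\mathrm{MaxCut}(W)\le\|W\|_1\,\lambda_{\max}(L_W).$$

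\textbf{Main obstacle.} The only delicate point is the normalisation of $\deg$ and of $\eta$, which must be consistent in three places: the identity \eqref{eq:maxcut=p/infty}, the Rayleigh quotient defining $\lambda_{\max}(L_W)$, and the total mass $\int_J\deg$.
\begin{itemize}
\item With the Laplacian convention $\deg(x)=\int_J W(x,y)\,dy$, we get $\int_J\deg=\|W\|_1$ exactly, which matches the factor in Step 2.
\item With the degree-related convention $\deg(x)=2\int_J W(x,y)\,dy$ from the graphon example, the extra factor $2$ has to be absorbed consistently into the definition of $\lambda_{\max}(L_W)$.
\end{itemize}
I would fix the former convention, matching the statement "$c_{\sup}(\Phi_2,\widetilde{\llbracket\cdot\rrbracket}_2)$ is the square root of $\lambda_{\max}(L_W)$", and verify that \eqref{eq:maxcut=p/infty} is unaffected. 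It is, since that identity involves only $\Phi_2$ and $\|\cdot\|_\infty$, not $\deg$.
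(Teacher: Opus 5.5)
Your proposal is correct and is essentially the paper's argument: the identity \eqref{eq:maxcut=p/infty} at $p=2$, then the comparison $c_{\sup}(\Phi_2,\widetilde{\llbracket \cdot\rrbracket}_\infty)\le \|W\|_1^{1/2}\,c_{\sup}(\Phi_2,\widetilde{\llbracket \cdot\rrbracket}_2)$, then the identification $c_{\sup}(\Phi_2,\widetilde{\llbracket \cdot\rrbracket}_2)^2=\lambda_{\max}(L_W)$. The one difference is in the middle step: you verify it directly from $\int_J\deg\,|f|^2\le\|f\|_\infty^2\int_J\deg$ rather than citing the $q$-monotonicity in Theorem \ref{thm:critical-inequality2}, and only the easy direction of Step 1 (your test function $1_S-1_{J\setminus S}$) is actually needed; this has the merit of showing that the constant is really $(\int_J\deg)^{1/2}$, which equals $\|W\|_1^{1/2}$ exactly under your convention $\deg(x)=\int_J W(x,y)\,dy$ (the convention consistent with the paper's normalisation $\lambda_{\max}(L_W)\le 4$).
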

\end{example}

\begin{example}[Bipartiteness ratio on graphon]
By Example \ref{exam:bipartiteness}, the bipartiteness ratio $\beta_W$ on $W$ equals $$\inf_{f\in \Bd}
\frac{\int_J\int_JW(x,y)|f(x)+f(y)|dxdy}{2\int_J\deg(x)|f(x)|dx}.$$ 
To obtain the expression of $\Psi_p$, we note that $\widehat{\psi}_{xy}(f)=|f(x)+f(y)|$, where 
$$\psi_{xy}(A,A')=\begin{cases}
0,&\text{ if } x\in A,y\in A'\text{ or }x\in A',y\in A\text{ or }\{x,y\}\cap (A\cup A')=\emptyset,\\
1,&\text{ if }\#\big(\{x,y\}\cap (A\cup A')\big)=1,\\
2,&\text{ otherwise}.
\end{cases}
$$
We can then take $\mathcal{Y}_1=\{\{-f,f\}:f\in \Bd\text{ with }\|f\|>0\}$ to have 
$$c_1(\Psi_p,\widetilde{\llbracket \cdot\rrbracket}_q)=\inf\limits_{Y\in \mathcal{Y}_1}\sup\limits_{f\in Y} \frac{\Big(\int_{J\times J}W(x,y)|f(x)+f(y)|^pdxdy\Big)^{\frac1p} }{\big(\int_J \deg(x)|f(x)|^qdx\big)^{\frac1q}}.$$
With such $\mathcal{Y}_1$,  
the bipartiteness ratio $\beta_W$ equals $c_1(\Psi_1,\widetilde{\llbracket \cdot\rrbracket}_1)/2$, and the square root of the smallest eigenvalue of the signless Laplacian on $W$ is equal to $c_1(\Psi_2,\widetilde{\llbracket \cdot\rrbracket}_2)$. 

Theorem \ref{thm:critical-inequality2} implies the following inequality 
\begin{theorem}\label{thm:Cheeger-graphon-dua}
For any $p,r\ge1$, we have \begin{equation}\label{eq:p-Cheeger-dual}
2^{1-r}c_1(\Psi_{pr},\widetilde{\llbracket \cdot\rrbracket}_{pr})^r\le c_1(\Psi_p,\widetilde{\llbracket \cdot\rrbracket}_p)\le r\cdot c_1(\Psi_{pr},\widetilde{\llbracket \cdot\rrbracket}_{pr})    
\end{equation}    
\end{theorem}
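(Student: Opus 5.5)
The plan is to obtain Theorem \ref{thm:Cheeger-graphon-dua} as a direct specialization of Theorem \ref{thm:critical-inequality2}, in its set-pair version for $\Psi$ with the norm $\widetilde{\llbracket \cdot\rrbracket}$. The proof of the right-hand inequality of Theorem \ref{thm:Cheeger-graphon-dua} then reduces to choosing exponents, and the left-hand inequality reduces to computing the constant $C_\Psi$.

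\textbf{Verifying the hypotheses.} We work with $E=J\times J$, the measure $d\eta=W(x,y)\,dx\,dy$, and $e_J=\{x,y\}$ for $e=(x,y)$. Then $|e_J|\le 2$. The graphon example after Definition \ref{def:hyper-degree-related} shows that $J$ and $E$ are degree-related, with the degree function used in $\widetilde{\llbracket\cdot\rrbracket}$. The function $\psi_{xy}$ defined above depends only on $A\cap\{x,y\}$ and $A'\cap\{x,y\}$, so it is concentrated on $e$ in the sense of Definition \ref{def:hyper-phi-concentrate}. Its values lie in $\{0,1,2\}$, so $C_\Psi=\sup_e\|\psi_e\|_\infty\le 2$; this bound is attained when $\{x,y\}\subset A$.

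\textbf{Admissibility.} We must check that the family $\mathcal{Y}_1=\{\{-f,f\}\}$ is admissible, i.e.\ invariant under Mazur maps. This holds because the Mazur map $f\mapsto f^r=\mathrm{sign}(f)|f|^r$ is odd, so it sends $\{-f,f\}$ to $\{-f^r,f^r\}$. Alternatively, this family agrees with $\mathcal{Y}_1$ of \eqref{eq:Y_k} up to taking supremum over larger sets, and the min-max value is unchanged.

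\textbf{Upper bound.} Apply the right inequality of Theorem \ref{thm:critical-inequality2} with $q=p$ and $t=r$, and choose $s=r$. The exponent condition $ps'=qt'$ becomes $s'=t'$, which holds because $s=t$. This gives
\[
c_1(\Psi_p,\widetilde{\llbracket\cdot\rrbracket}_p)\le r\, c_1(\Psi_{pr},\widetilde{\llbracket\cdot\rrbracket}_{pr}).
\]

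\textbf{Lower bound.} The left inequality of Theorem \ref{thm:critical-inequality2} with $t=r$ gives
\[
(2C_\Psi)^{1-r}c_1^r(\Psi_{pr},\widetilde{\llbracket\cdot\rrbracket}_{pr})\le c_1(\Psi_p,\widetilde{\llbracket\cdot\rrbracket}_p),
\]
which has a factor $4^{1-r}$ when $C_\Psi=2$. The statement instead asserts $2^{1-r}$. This normalization mismatch is the main obstacle.

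My first attempt to resolve it is to reread how $C$ enters the proof of Theorem \ref{thm:critical-inequality2}. The constant $2C$ arises from the pointwise bound $\widehat\psi_e(f)\le 2C\max_{z\in e_J}|f(z)|$, which is then compared with the $p$-mean over the two endpoints. For the concrete $\psi_{xy}$ we have
\[
\widehat\psi_{xy}(f)=|f(x)+f(y)|\le 2\Big(\tfrac{|f(x)|^p+|f(y)|^p}{2}\Big)^{1/p},
\]
so the effective constant is $2$ rather than $2C_\Psi=4$. With this constant, the interpolation step of that proof, namely
\[
|f(x)+f(y)|^{pr}\le |f(x)+f(y)|^{p}\,\big(2\max(|f(x)|,|f(y)|)\big)^{p(r-1)}
\]
combined with the Mazur map $f\mapsto f^r$, yields exactly $2^{1-r}$. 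I would redo that one step for this specific $\widehat\psi_{xy}$, exactly as was done for the difference $|f(x)-f(y)|$ in Theorem \ref{thm:Cheeger-graphon}, which has the same form. This would give \eqref{eq:p-Cheeger-dual}.
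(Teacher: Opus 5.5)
You take the same route as the paper for the upper bound, and that part is fine. The paper simply says Theorem \ref{thm:critical-inequality2} implies \eqref{eq:p-Cheeger-dual}; with $q=p$ and $s=t=r$ the exponent condition holds automatically.

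Your diagnosis of the lower bound is also right. Since $\psi_{xy}$ takes the value $2$, a literal application of Theorem \ref{thm:critical-inequality2} gives only $(2C_\Psi)^{1-r}=4^{1-r}$, and the paper's one-line derivation passes over this.

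\textbf{The gap is in your repair.} First, the constant does not enter the way you describe. In Proposition \ref{pro:phi-cpq21-r} and its set-pair analogue, $C^{r-1}$ comes from $\varphi_e^p\ge \varphi_e^{rp}/\|\varphi_e\|_\infty^{rp-p}$ applied to level-set values. The factor $2^{r-1}$ comes from the elementary inequality for $|b|^{r-1}b-|a|^{r-1}a$ together with a power-mean bound. No bound of the form $\widehat\psi_e(f)\le 2C\max_{z\in e_J}|f(z)|$ is used, so there is no step to redo with a $p$-mean.

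More importantly, your interpolation inequality $|f(x)+f(y)|^{pr}\le |f(x)+f(y)|^p(2\max(|f(x)|,|f(y)|))^{p(r-1)}$ compares $\Psi_{pr}(f)$ with a weighted $\Psi_p$ of the \emph{same} $f$. The min-max argument instead needs a comparison of $\Psi_{pr}(g)^r$ with $\Psi_p(g^r)$. Getting this pointwise from your inequality would require $|a+b|\max(|a|,|b|)^{r-1}\le \big||a|^{r-1}a+|b|^{r-1}b\big|$. That is false: $a=1$, $b=\tfrac12$, $r=2$ gives $\tfrac32>\tfrac54$.

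Your inequality does suffice for $p=1$. There Theorem \ref{thm:tilde-fg-equal} reduces $c_1(\Psi_1,\widetilde{\llbracket\cdot\rrbracket}_1)$ to functions $f=1_A-1_{A'}$, for which $\|f\|_\infty=1$ and $|f|^r=|f|$. For $p>1$ (e.g.\ the eigenvalue case $p=2$), the infimum is not attained on such pairs.

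The missing ingredient is the pointwise inequality
\[
|a+b|^r\le 2^{r-1}\big||a|^{r-1}a+|b|^{r-1}b\big|,\qquad a,b\in\R,\ r\ge 1.
\]
When $a,b$ have the same sign it follows from convexity of $t\mapsto t^r$; when they have opposite signs it follows from superadditivity of $t\mapsto t^r$ on $[0,\infty)$.

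Given $f\neq 0$, set $g=f^{1/r}$ (the Mazur map of order $1/r$). Then $\widetilde{\llbracket g\rrbracket}_{pr}^{\,r}=\widetilde{\llbracket f\rrbracket}_p$. Raising the inequality to the $p$-th power and integrating against $W$ gives $\Psi_{pr}(g)^r\le 2^{r-1}\Psi_p(f)$. Since $\Psi_p(-f)=\Psi_p(f)$, we have $c_1(\Psi_p,\widetilde{\llbracket\cdot\rrbracket}_p)=\inf_{f\ne0}\Psi_p(f)/\widetilde{\llbracket f\rrbracket}_p$. Taking the infimum over $f$ yields $c_1(\Psi_{pr},\widetilde{\llbracket\cdot\rrbracket}_{pr})^r\le 2^{r-1}c_1(\Psi_p,\widetilde{\llbracket\cdot\rrbracket}_p)$.

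Working directly with the closed form $\widehat\psi_{xy}(f)=|f(x)+f(y)|$, rather than through level sets, is what removes the factor $\|\psi_{xy}\|_\infty=2$.
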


Taking $p=1$ and $r=2$, we obtain the spectral gap inequality relating the bipartiteness ratio and the smallest eigenvalue of the normalized signless Laplacian on $W$. This can be written as the following corollary. 
\begin{corollary}
Let $W:[0,1]^2\to[0,1]$ be a graphon, $\beta_W$ be its bipartiteness ratio, and $\lambda_{\max}(L_W)$ be the largest eigenvalue of the normalized Laplacian on $W$. Then
$$ \beta_W^2\le 4-\lambda_{\max}(L_W) \le 4\beta_W$$
\end{corollary}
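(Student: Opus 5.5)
The plan is to specialize Theorem \ref{thm:Cheeger-graphon-dua} to $p=1$, $r=2$, and then translate the two global extension constants into $\beta_W$ and $\lambda_{\max}(L_W)$. With $p=1,r=2$, \eqref{eq:p-Cheeger-dual} reads
$$\tfrac12\, c_1(\Psi_2,\widetilde{\llbracket \cdot\rrbracket}_2)^2\;\le\; c_1(\Psi_1,\widetilde{\llbracket \cdot\rrbracket}_1)\;\le\; 2\, c_1(\Psi_2,\widetilde{\llbracket \cdot\rrbracket}_2).$$
We already know from the preceding discussion that $c_1(\Psi_1,\widetilde{\llbracket \cdot\rrbracket}_1)=2\beta_W$. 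This follows from Example \ref{exam:bipartiteness} and Corollary \ref{cor:6-equality}: since $\mathcal{Y}_1$ consists of pairs $\{f,-f\}$ and the quotient is even in $f$, the inf--sup collapses to a plain infimum over $f$. So the remaining task is to identify $c_1(\Psi_2,\widetilde{\llbracket \cdot\rrbracket}_2)^2$ with $4-\lambda_{\max}(L_W)$.

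For that identification I would use the pointwise identity $|f(x)+f(y)|^2=2f(x)^2+2f(y)^2-|f(x)-f(y)|^2$. Integrating against $W$ and using the degree-related normalization of Example \ref{exam:nonlocal-graphon}, where $\deg(x)=2\int_J W(x,y)\,dy$, the first two terms should produce a fixed multiple of $\int_J\deg|f|^2$. Hence for every $f\neq 0$
$$\frac{\int\!\!\int W|f(x)+f(y)|^2}{\int\deg |f|^2}\;=\;4-\frac{\int\!\!\int W|f(x)-f(y)|^2}{\int \deg|f|^2}.$$
I need to check the factor carefully against the paper's normalization of $\widetilde{\llbracket\cdot\rrbracket}_2$ and of $L_W$; the constant $4$ in the corollary indicates this is the intended convention. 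Taking the infimum over $f$ on the left gives the supremum on the right. By the Rayleigh characterization with $\mathcal{Y}_{\sup}$, used before Corollary \ref{cor:maxcut-Lapla}, that supremum is $\lambda_{\max}(L_W)$. Therefore $c_1(\Psi_2,\widetilde{\llbracket \cdot\rrbracket}_2)^2=4-\lambda_{\max}(L_W)$, the smallest eigenvalue of the signless Laplacian.

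Substituting into the displayed inequality gives two bounds:
\begin{itemize}
\item the left inequality yields $\tfrac12(4-\lambda_{\max})\le 2\beta_W$, that is, $4-\lambda_{\max}(L_W)\le 4\beta_W$;
\item the right inequality yields $2\beta_W\le 2\sqrt{4-\lambda_{\max}}$, that is, $\beta_W^2\le 4-\lambda_{\max}(L_W)$.
\end{itemize}

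The main obstacle is the bookkeeping of normalizations: the factor $2$ in $\deg$, whether $\widetilde{\llbracket\cdot\rrbracket}_q$ carries a $p$-th root, and the constant relating signless and ordinary Laplacian spectra. A secondary point is checking that the hypotheses of Theorem \ref{thm:critical-inequality2} hold for $\psi_{xy}$: it is concentrated on $\{x,y\}$, $|e_J|=2$, and $C_\Psi=\sup\|\psi_{xy}\|_\infty=2$. The factor $2^{1-r}$ in Theorem \ref{thm:Cheeger-graphon-dua} is stated with that constant absorbed, so I would recheck it against $(2C_\Psi)^{1-t}$ before trusting the lower bound.
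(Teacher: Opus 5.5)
Your proposal is correct and follows the paper's route: specialize Theorem \ref{thm:Cheeger-graphon-dua} to $p=1$, $r=2$ and substitute $c_1(\Psi_1,\widetilde{\llbracket\cdot\rrbracket}_1)=2\beta_W$ and $c_1(\Psi_2,\widetilde{\llbracket\cdot\rrbracket}_2)^2=4-\lambda_{\max}(L_W)$. Your identity $|a+b|^2=2a^2+2b^2-|a-b|^2$ justifies the second identification, which the paper only asserts. Both of your bookkeeping worries resolve in your favour. Both identifications hold with the $p$-mean convention of Example \ref{exam:nonlocal-graphon}, i.e.\ weight $\int_J W(x,y)\,dy$, not $2\int_J W(x,y)\,dy$; the latter would turn the $4$ into a $2$ and give $c_1(\Psi_1,\widetilde{\llbracket\cdot\rrbracket}_1)=\beta_W$. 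The factor $2^{1-r}$ rather than $(2C_\Psi)^{1-r}=4^{1-r}$ is also legitimate here, by a direct check on indicator pairs $f=1_A-1_{A'}$: there $|f(x)+f(y)|\in\{0,1,2\}$, so $|f(x)+f(y)|^2\le 2|f(x)+f(y)|$, and $\widetilde{\llbracket f\rrbracket}_2^2=\widetilde{\llbracket f\rrbracket}_1$. Together with Theorem \ref{thm:tilde-fg-equal}, this gives $c_1(\Psi_2,\widetilde{\llbracket\cdot\rrbracket}_2)^2\le 2\,c_1(\Psi_1,\widetilde{\llbracket\cdot\rrbracket}_1)$ directly.
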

Together with Example \ref{exam:bipartiteness}, we have $4\,\text{MaxCut}(W)\le 4\|W\|_1(1-\beta_W)\le \|W\|_1\lambda_{\max}(L_W)$ which refines Corollary \ref{cor:maxcut-Lapla}.
\end{example}

\begin{example}[Conductance on signed graphons]
    
Finally, we investigate the conductance on signed graphons \cite{Zhang/graphon}. 
For a signed graphon $W:J\times J\to [-1,1]$, the signed conductance of $W$ is defined as
$$h(W):=\inf_{\text{measurable }A,B\subset J,A\cap B=\varnothing} 
\psi(A,B)
$$
where 
$$\psi(A,B)=
\frac{4\eta(W_+[A\times B])+2\eta \big(W_-[(A\times A)\cup(B\times B)]\big)+2\eta((A\cup B)\times (A\cup B)^c)}{\mu(A\cup B)}$$
see Example \ref{example:frustration} for the meaning of notions $W_\pm[\cdot]$ and others. 
Taking 
$$\psi_{xy}(A,A')=
\big|1_A(x)-1_{A'}(x)-\mathrm{sgn}(W(x,y))(1_A(y)-1_{A'}(y))\big| \
$$
we have 
$\widehat{\psi}_{xy}(f)=\big|f(x)-\mathrm{sgn}(W(x,y))f(y)\big|$. By Theorem \ref{thm:critical-inequality2}, the quantity 
$$c(\Psi_p,\widetilde{\llbracket \cdot\rrbracket}_p)=\inf\limits_{f\in \Bd} \frac{\Big(\int_{J\times J}|W(x,y)|\big|f(x)-\mathrm{sgn}(W(x,y))f(y)\big|^pdxdy\Big)^{\frac1p} }{\big(\int_J \deg(x)|f(x)|^pdx\big)^{\frac1p}}$$
satisfies the following properties:
\begin{prop} The quantity $c(\Psi_p,\widetilde{\llbracket \cdot\rrbracket}_p)$ satisfies:

\begin{itemize}
\item $c(\Psi_1,\widetilde{\llbracket \cdot\rrbracket}_1)=h(W)$
\item $c(\Psi_2,\widetilde{\llbracket \cdot\rrbracket}_2)=\sqrt{\lambda_1(L_W)}$ 
\item $2^{-p}c(\Psi_p,\widetilde{\llbracket \cdot\rrbracket}_p)^p$ is decreasing with respect to $p\in[1,\infty)$
\item $p\cdot c(\Psi_p,\widetilde{\llbracket \cdot\rrbracket}_p)$ is increasing with respect to $p\in[1,\infty)$
\end{itemize} 
In particular, for a signed graphon $W$, the following Cheeger-like inequality holds:
$$ \frac{h^2(W)}{4} \le \lambda_1(L_W) \le 2h(W)$$
where $\lambda_1(L_W)$ is the smallest eigenvalue of the signed Laplacian on $W$, and $h(W)$ is the signed conductance of $W$.
\end{prop}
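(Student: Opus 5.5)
We establish the four bullet points in order and then derive the Cheeger-like inequality by specialising the last two to $p=1$ and $p=2$.

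\textbf{First bullet.} We apply the set-pair part of Theorem \ref{thm:tilde-fg-equal}, i.e.\ \eqref{eq:Choquet-pair-identity}. Take $\psi_1(A,A')=\int_{J\times J}|W(x,y)|\psi_{xy}(A,A')\,dxdy$ and $\psi_2(A,A')=\mu(A\cup A')$, where $\mu$ is the degree-weighted measure $\mu(B)=\int_B\deg(x)dx$. By linearity of $\psi\mapsto\widehat\psi$ and Fubini, $\widehat\psi_1(f)=\int|W(x,y)|\,|f(x)-\mathrm{sgn}(W(x,y))f(y)|\,dxdy$ and $\widehat\psi_2(f)=\int_J\deg(x)|f(x)|dx$. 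We then evaluate $\psi_{xy}(A,A')$ case by case:
\begin{itemize}
\item for $(x,y)\in W_+[A\times A']$ it equals $2$;
\item for $(x,y)\in W_-[A\times A]$ or $W_-[A'\times A']$ it equals $2$;
\item if exactly one of $x,y$ lies in $A\cup A'$ it equals $1$;
\item otherwise it equals $0$.
\end{itemize}
Integrating and symmetrising gives $\psi_1(A,A')$ equal to the numerator in the definition of $h(W)$. So the set-pair infimum is $h(W)$, and \eqref{eq:Choquet-pair-identity} yields $c(\Psi_1,\widetilde{\llbracket\cdot\rrbracket}_1)=h(W)$. Nonnegativity of $\psi_1,\psi_2$ holds, and $\psi_1,\psi_2\in\bm{\varPsi}(\B_2)$ by Lemma \ref{lem:BV-is-Psi}, since $\psi_{xy}$ is bounded by $2$ and $W$ is integrable.

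\textbf{Second bullet.} For $p=2$ the quotient is the Rayleigh quotient of the signed Laplacian:
\[
\int|W|\,(f(x)-\mathrm{sgn}(W)f(y))^2=2\int\deg f^2-2\int W f(x)f(y),
\]
up to the normalisation of $L_W$. Its infimum over $f\ne0$ in $\Bd$ equals $\lambda_1(L_W)$ by the variational characterisation together with density of $\Bd$ in $L^2(\deg)$. Hence $c(\Psi_2,\widetilde{\llbracket\cdot\rrbracket}_2)=\sqrt{\lambda_1(L_W)}$.

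\textbf{Third and fourth bullets.} These follow from Theorem \ref{thm:critical-inequality2} applied with $E=J\times J$, $d\eta=|W|\,dxdy$ and $e_J=\{x,y\}$, so that $|e_J|\le2$. Each $\psi_{xy}$ is concentrated on $\{x,y\}$, and the degree-related identity holds with $\deg(x)=2\int|W(x,y)|dy$. We take $C_\Psi=\sup\|\psi_{xy}\|_\infty=2$ and $\mathcal Y=\{\Bd\setminus\{0\}\}$, which is admissible. Theorem \ref{thm:critical-inequality2} then gives that $r\mapsto(c(\Psi_{pr},\widetilde{\llbracket\cdot\rrbracket}_{pr})/4)^r$ is decreasing.

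This is where the normalisation is delicate. One should compare with the graphon Cheeger example, where the constant $2^{1-r}$ appears. I expect the correct bookkeeping to be that the relevant constant is $2$, because $\widehat\psi_{xy}(f)\le2\max(|f(x)|,|f(y)|)$ while the $p$-mean normalisation of $\widetilde{\llbracket\cdot\rrbracket}$ absorbs a further factor. This gives $2^{1-r}c_{pr}^r\le c_p$, which is equivalent to $2^{-p}c_p^p$ decreasing. Checking this constant carefully is the main obstacle.

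For the fourth bullet, the right inequality $c_p\le t\,c_{ps}$ with $s=t$ and $q=p$ (so that $ps'=qt'$) gives $p\,c_p\le pt\,c_{pt}$, which is exactly the statement that $p\,c_p$ is increasing.

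\textbf{Cheeger-like inequality.} Set $p=1$ and $r=2$. The third bullet gives $2^{-1}h\ge2^{-2}\lambda_1$, i.e.\ $\lambda_1\le2h$. The fourth bullet gives $h\le2\sqrt{\lambda_1}$, i.e.\ $h^2/4\le\lambda_1$.
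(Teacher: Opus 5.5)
\textbf{First, second and fourth bullets.} These follow the paper's terse route: Theorem \ref{thm:tilde-fg-equal} for $p=1$, the Rayleigh quotient for $p=2$, and the upper inequality $c_p\le t\,c_{pt}$ of Theorem \ref{thm:critical-inequality2}. They are fine up to two small corrections.
\begin{enumerate}
\item $\mathcal Y=\{\Bd\setminus\{0\}\}$ is the wrong admissible family, because it makes $c$ the \emph{supremum} of the quotient. Take $\mathcal Y_1=\{\{f,-f\}:f\ne0\}$ (or all singletons) instead. Both are invariant under Mazur maps, and since $\Psi_p(-f)=\Psi_p(f)$, the inf--sup reduces to the infimum in the statement.
\item With $\deg(x)=2\int_J|W(x,y)|dy$, your expansion should read $\int\deg f^2-2\int Wf(x)f(y)$. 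The $p=2$ quotient is then the Rayleigh quotient of the normalized signed Laplacian $I-D^{-1}W$, where $D(x)=\int_J|W(x,y)|dy$. This is the normalisation that makes $c_1=h(W)$ (with $\mu(B)=\int_B\deg$) consistent with the final inequality.
\end{enumerate}

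\textbf{Third bullet: the genuine gap.} This is exactly where you located it. Theorem \ref{thm:critical-inequality2} only supplies the factor $(2C_\Psi)^{1-r}=4^{1-r}$, because $\psi_{xy}$ attains the value $2$; Proposition \ref{pro:psi-cpq21-r} does not reach $2^{1-r}$ either. This factor is invariant under rescaling $\psi_{xy}$. The norm cannot absorb anything, since it transforms exactly: $\widetilde{\llbracket f^r\rrbracket}_p=\widetilde{\llbracket f\rrbracket}_{pr}^{\,r}$. So that route yields only $\lambda_1(L_W)\le 4h(W)$. The paper merely cites Theorem \ref{thm:critical-inequality2} at this point, so it does not supply the step either.

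The missing idea is to work directly with $\widehat\psi_{xy}(f)=|f(x)-\sigma f(y)|$, where $\sigma=\mathrm{sgn}\,W(x,y)$, together with the elementary inequality
\[
\big||a|^{r-1}a-|b|^{r-1}b\big|\ \ge\ 2^{1-r}|a-b|^{r}\qquad(a,b\in\R,\ r\ge1).
\]
This holds by superadditivity of $u\mapsto u^r$ on $[0,\infty)$ when $a,b$ have the same sign, and by convexity when they have opposite signs. Apply it with $a=f(x)$ and $b=\sigma f(y)$. Because the Mazur map is odd, $(\sigma f(y))^r=\sigma f^r(y)$. Integrating the $p$-th power against $|W|$ then gives $\Psi_p(f^r)\ge 2^{1-r}\Psi_{pr}(f)^r$, and hence
\[
\frac{\Psi_p(f^r)}{\widetilde{\llbracket f^r\rrbracket}_p}\ \ge\ 2^{1-r}\Big(\frac{\Psi_{pr}(f)}{\widetilde{\llbracket f\rrbracket}_{pr}}\Big)^{r}.
\]
The min--max transfer of Step 1.2 applies because Mazur maps permute $\mathcal Y$ and $u\mapsto u^r$ is increasing. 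It gives $c_p\ge 2^{1-r}c_{pr}^{\,r}$, which is equivalent to $2^{-p}c_p^p$ being decreasing. Taking $p=1$ and $r=2$ gives the desired $\lambda_1(L_W)\le 2h(W)$.
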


\end{example}

\subsection{
Hypergraph $p$-Laplacian and hyperedge expansion
}

In recent years, there has been growing interest in applying hypergraph tools to the analysis of real-world data. To this end, various methods have been proposed aimed at extending spectral graph theory to the field of hypergraphs and their analysis. 
It should be noted that there are multiple ways to generalize graph operators to the hypergraph setting. 
For example, there are two different $p$-Laplacian for hypergraphs that are widely used; 
one is closely related to the incidence matrix \cite{Stokke25,Fazeny24,JZ22}, whilst the other is closely associated with the vertex expansion or hyperedge expansion 
\cite{Chan18,Louis12}. 
In this section, we shall demonstrate that the latter can be naturally incorporated into the Choquet-type extension framework, 
thereby yielding richer results. 

Given an oriented hypergraph $(V,E)$ (see \cite{Jost/Mulas/Zhang26} for the definition) satisfying  $e_{in}\ne\emptyset\ne e_{out}$ and $\#(e_{in}\cup e_{out})\ge 2$ for all $e\in E$, let $\varphi_{e}:\mathcal{P}(V)\to \R$ be defined by
\begin{equation*}
\varphi_{e}(A)=\begin{cases}
1,&\text{ if } e_{in}\cap A\ne\emptyset\ne e_{out}\setminus A\text{ \; or\; }e_{out}\subset  A\subset V\setminus e_{in},\\
0,&\text{ otherwise},
\end{cases}
\end{equation*}
where $e_{in}:=\{\text{inputs of }e\}$ and  $e_{out}:=\{\text{outputs of }e\}$, and $\mathcal{P}(V)$ is the power set of $V$. 
One can check that $\varphi_e(A)=0$ if and only if $A\supset e_{in}\cup e_{out}$ or $A\cap(e_{in}\cup e_{out})=\varnothing$.
Then the Choquet extension of $\varphi_{e}$ is determined by 
$\widehat{\varphi}_{e}(f)=|\max\limits_{u\in e_{in}}f(u)-\min\limits_{v\in e_{out}}f(v)|$, and we have the global extension constant
$$ c_k(\Phi_p,\widetilde{\llbracket \cdot\rrbracket}_q):=\inf\limits_{Y\in \mathcal{Y}_k}\sup\limits_{f\in Y} \frac{\big(\sum_{e\in E}|\max\limits_{u\in e_{in}}f(u)-\min\limits_{v\in e_{out}}f(v)|^p\big)^{\frac1p}}{\big(\sum_{v\in V}\deg(v)|f(v)|^q\big)^{\frac1q}}.$$

The hyperedge expansion of $(V,E)$ is defined as 
\begin{equation}\label{eq:expansion-Ch}
h:=\min\limits_{A\in\mathcal{P}(V)\setminus\{\emptyset,V\}}\frac{\#(\partial A)}{\min\{\vol(A),\vol(V\setminus A)\}}\end{equation}
see \cite[Chapter 6]{Jost/Mulas/Zhang26}, where we adopt  the volume  $\vol(A):=\sum_{e\in E}\#(e\cap A)=\sum_{i\in A}\deg(i)$, the degree $\deg(i):=\# \{e\in E: i\in e\}$, and the boundary set \begin{equation*}\partial A:=\{e\in E: e_{in}\cap A\ne\emptyset\ne e_{out}\setminus A\text{ \; or\; }e_{out}\subset  A\subset V\setminus e_{in}\}.\end{equation*}


When taking $\mathcal{Y}=\mathcal{Y}_2$ or $\mathcal{Y}=\{Y_g:g\ne0\}$ where $Y_g:=\{g+c:c\in\R\}$, and $p=q=1$, we have  $c(\Phi_1,\widetilde{\llbracket \cdot\rrbracket}_1)=h$, and by adopting the hypergraph $p$-Laplacian introduced in \cite[Chapter 6]{Jost/Mulas/Zhang26}, 
the second eigenvalue $\lambda_2(\Delta_p)$ satisfies   $\big(c(\Phi_p,\widetilde{\llbracket \cdot\rrbracket}_p)\big)^p=\lambda_2(\Delta_p)$. 
In a similar manner, $\big(c_k(\Phi_p,\widetilde{\llbracket \cdot\rrbracket}_p)\big)^p=\lambda_k(\Delta_p)$ for any $k$. 
Theorem \ref{thm:critical-inequality2} implies the following inequality. 
\begin{theorem}
\label{thm:p-Lap-Cheeger}
The min-max eigenvalues of the $p$-Laplacians on oriented hypergraphs satisfy the following Cheeger inequality
$$ \frac{2^{p-1}}{p^p} \big(\lambda_k(\Delta_1)\big)^p\le \lambda_k(\Delta_p)\le2^{p-1}\lambda_k(\Delta_1) $$
and in particular,
\begin{equation}\label{eq:Cheeger-chemical-hyper-p}
2^{p-1}\frac{h^p}{p^p}\le \lambda_2(\Delta_p)\le 2^{p-1}h
\end{equation}
where $h$ is the hyperedge expansion defined in \eqref{eq:expansion-Ch}, and $\lambda_k(\Delta_p)$ denotes the $k$-th min-max eigenvalue of the $p$-Laplacian on oriented hypergraphs.
\end{theorem}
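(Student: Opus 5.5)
The plan is to obtain both inequalities from the extended monotonicity machinery (Theorems \ref{thm:critical-inequality} and \ref{thm:critical-inequality2}), applied to the family $\{\varphi_e\}_{e\in E}$ with counting measures on $V$ and $E$. Three facts set this up:
\begin{itemize}
\item $V$ and $E$ are degree-related with $\deg(i)=\#\{e:i\in e\}$.
\item Each $\varphi_e$ is concentrated on $e_J=e_{in}\cup e_{out}$.
\item $C_\Phi=\sup_e\|\varphi_e\|_\infty=1$.
\end{itemize}
Hyperedges may contain more than two vertices, so Theorem \ref{thm:critical-inequality2} applies only in its $p=q=1$ form. This is exactly what is needed, since both sides compare the exponent $1$ with a general exponent.

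\textbf{Upper bound.} Take the first exponent pair to be $(1,1)$ and $t=p$ in the left inequality of Theorem \ref{thm:critical-inequality2}, for the admissible family $\mathcal{Y}_k$. This gives
$$(2C_\Phi)^{1-p}\,c_k^p(\Phi_p,\widetilde{\llbracket\cdot\rrbracket}_p)\le c_k(\Phi_1,\widetilde{\llbracket\cdot\rrbracket}_1).$$
With $C_\Phi=1$ and the identifications $c_k(\Phi_p,\widetilde{\llbracket\cdot\rrbracket}_p)^p=\lambda_k(\Delta_p)$ and $c_k(\Phi_1,\widetilde{\llbracket\cdot\rrbracket}_1)=\lambda_k(\Delta_1)$, this is $\lambda_k(\Delta_p)\le 2^{p-1}\lambda_k(\Delta_1)$. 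For $k=2$ we use $c(\Phi_1,\widetilde{\llbracket\cdot\rrbracket}_1)=h$, which follows from Theorem \ref{thm:tilde-fg-equal} as in Example \ref{example:conductance-graphon}, and obtain the right half of \eqref{eq:Cheeger-chemical-hyper-p}.

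\textbf{Lower bound.} The right inequality in Theorem \ref{thm:critical-inequality}, with $p=q=1$ and $s=t=p$ (so that $s'=t'$), only yields $\lambda_k(\Delta_1)\le p\,c_k(\Phi_p,\widetilde{\llbracket\cdot\rrbracket}_p)$. That is, $\lambda_k(\Delta_p)\ge \lambda_k(\Delta_1)^p/p^p$, which misses the factor $2^{p-1}$. I would therefore redo that step of the proof directly in the hypergraph setting.

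Given $Y\in\mathcal{Y}_k$, apply the Mazur map $f\mapsto f^{p}:=|f|^{p-1}f$; the image family is again in $\mathcal{Y}_k$ by admissibility. Since the Mazur map is increasing, it commutes with $\max_{e_{in}}$ and $\min_{e_{out}}$. Hence, with $a=\max_{e_{in}}f$ and $b=\min_{e_{out}}f$,
$$\widehat{\varphi}_e(f^p)=|a^p-b^p|.$$
The key elementary estimate is the Hermite--Hadamard type bound
$$|a^p-b^p|\le \tfrac p2\,|a-b|\,\big(|a|^{p-1}+|b|^{p-1}\big).$$
This holds for $a,b$ of the same sign by convexity of $x\mapsto |x|^{p-1}$. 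When $a$ and $b$ have opposite signs it follows even more easily, since then $|a^p-b^p|=|a|^p+|b|^p$.

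Summing over $e$ and applying Hölder with exponents $(p,p')$ gives
$$\sum_e|a-b|\,\big(|a|^{p-1}+|b|^{p-1}\big)\le \Big(\sum_e|a-b|^p\Big)^{1/p}\Big(\sum_e\big(|a|^{p-1}+|b|^{p-1}\big)^{p'}\Big)^{1/p'}.$$
For the second factor, $(|a|^{p-1}+|b|^{p-1})^{p'}\le 2^{p'-1}(|a|^p+|b|^p)$. Since $a$ and $b$ are values of $f$ at distinct vertices of $e$, the degree-relation \eqref{eq:degree-related} bounds $\sum_e(|a|^p+|b|^p)$ by $\sum_v\deg(v)|f(v)|^p=\widetilde{\llbracket f\rrbracket}_p^p$. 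Meanwhile $\widetilde{\llbracket f^p\rrbracket}_1=\widetilde{\llbracket f\rrbracket}_p^p$.

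Combining these should give
$$\frac{\Phi_1(f^p)}{\widetilde{\llbracket f^p\rrbracket}_1}\le \frac{p}{2}\,2^{1/p}\,\frac{\Phi_p(f)}{\widetilde{\llbracket f\rrbracket}_p}.$$
Taking $\sup_{f\in Y}$ and then $\inf_Y$ yields $\lambda_k(\Delta_1)\le p\,2^{-(p-1)/p}\,c_k(\Phi_p,\widetilde{\llbracket\cdot\rrbracket}_p)$. Raising to the $p$-th power gives $\frac{2^{p-1}}{p^p}\lambda_k(\Delta_1)^p\le\lambda_k(\Delta_p)$, and for $k=2$ this is the left half of \eqref{eq:Cheeger-chemical-hyper-p}.

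\textbf{Main obstacle.} The delicate point is the bookkeeping in this last step, namely making sure the degree-relation produces exactly $\widetilde{\llbracket f\rrbracket}_p^p$ with no loss. The danger arises when the maximizer over $e_{in}$ and the minimizer over $e_{out}$ coincide. This cannot happen when $a\neq b$, and edges with $a=b$ contribute zero on the left, so the estimate survives. A secondary check is that the eigenvalue identifications $c_k^p=\lambda_k(\Delta_p)$ use the same normalization as in \cite[Chapter 6]{Jost/Mulas/Zhang26}.
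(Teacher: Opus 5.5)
Your upper bound follows the paper's route: the left inequality of Theorem \ref{thm:critical-inequality2} in its $p=q=1$ form, with $C_\Phi=1$. You are also right that the factor $2^{p-1}$ in the lower bound does not follow from the stated right-hand inequality. In the paper it comes from the refined computation in the proof of Proposition \ref{pro:phi-rKcpq}, where each $\varphi_e\{f\ge s\}$ has a single nonzero interval, and that is what you are redoing.

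The gap is your Hermite--Hadamard estimate $|a^p-b^p|\le\frac p2|a-b|(|a|^{p-1}+|b|^{p-1})$, which is false for $1<p<2$. There $x\mapsto x^{p-1}$ is concave on $[0,\infty)$, so the trapezoid rule underestimates $\int_b^a p x^{p-1}\,dx$ rather than overestimating it. In the opposite-sign case your argument needs $p/2\ge 1$. For instance, $p=\tfrac32$, $a=1$, $b=0$ gives $1\le\tfrac34$. Taking $b=-\varepsilon$ with small $\varepsilon>0$ breaks the opposite-sign case in the same way. As written, your lower bound is established only for $p\ge 2$ (and trivially $p=1$).

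The repair is to use the weaker but always valid inequality that the paper uses:
\[
\big||a|^{p-1}a-|b|^{p-1}b\big|\le p\,|a-b|\Big(\frac{|a|^p+|b|^p}{2}\Big)^{1-\frac1p},\qquad p\ge 1 .
\]
For $a,b\ge 0$, apply Jensen twice:
\[
\frac{a^p-b^p}{p(a-b)}=\int_0^1(ta+(1-t)b)^{p-1}dt\le\Big(\int_0^1(ta+(1-t)b)^{p}dt\Big)^{1-\frac1p}\le\Big(\frac{a^p+b^p}{2}\Big)^{1-\frac1p}.
\]
For opposite signs it reduces to $2^{1-1/p}\|(a,b)\|_p\le p\|(a,b)\|_1$.

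Your bound is stronger only by the power-mean inequality $M_{p-1}\le M_p$. That extra strength is exactly what you throw away in the step $(|a|^{p-1}+|b|^{p-1})^{p'}\le 2^{p'-1}(|a|^p+|b|^p)$, so nothing is lost by the switch. Apply Hölder with exponents $(p,p')$ directly to the power-mean form, summing only over edges with $a\ne b$ (there $a$ and $b$ sit at distinct vertices, so the degree relation applies). This gives $\Phi_1(f^p)\le p\,2^{-1/p'}\,\Phi_p(f)\,\widetilde{\llbracket f\rrbracket}_p^{\,p-1}$. That is the same constant $p\,2^{-(p-1)/p}$, now for every $p\ge 1$. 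The rest of your argument (min-max over $\mathcal{Y}_k$, raising to the $p$-th power, and the case $k=2$) then goes through unchanged.
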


\begin{remark}\label{remark:p-Lap-} 
Theorem \ref{thm:p-Lap-Cheeger} includes the following special cases: 
\begin{itemize}
\item Taking $p=2$ and letting  $e_{in}=e_{out}$ for any $e\in E$, we get Louis hypergraph Laplacian  \cite{Louis15} and the Cheeger inequality therein. 
\item Taking $(V,E)$ as a graph (i.e., $e_{in}=e_{out}$ and $\#e_{in}=2$),  Theorem \ref{thm:p-Lap-Cheeger} implies the Cheeger inequality for the graph $p$-Laplacian. 
\item Taking $p=1$ and letting  $e_{in}=e_{out}$ for any $e\in E$, we get the total variation on hypergraphs  \cite{TVhyper-13}.
\end{itemize}
\end{remark}


\subsection{Torsional rigidity and eigenvalues of $p$-Laplacian on Riemannian manifolds}\label{subsec:pLap-monotonicity}

\begin{example}
Let $r> 0$. Let $E=J=M$ be a Riemannian manifold of dimension $n$ and $\B$ be the collection of Borel sets. For any $x\in M$ and $A\in\B$, define $$\varphi_x(A)=\frac{\mathcal{H}^{n-1}(\partial A\cap B_r(x))}{\mathcal{H}^n(B_r(x))}.$$ 
Then 
$\widehat{\varphi}_x(f)=\frac{1}{\mathcal{H}^n(B_r(x))}\int_{B_r(x)}|\nabla f(y)|dy$ and $|f|_{\varphi_x}=\max\limits_{y\in B_r(x)}|f(y)|$. Thus, it follows from Theorem \ref{thm:critical-inequality} that 
$p\cdot c(\Phi_{p,r},\llbracket \cdot\rrbracket_{p,r})$ is increasing with respect to $p\ge1$, where $$\Phi_{p,r}(f):=\left( \int_M\left(\int_{B_r(x)}\frac{|\nabla f(y)|}{\mathcal{H}^n(B_r(x))}dy\right)^pdx
\right)^{\frac1p}\;\text{ and }\; \llbracket f\rrbracket_{p,r}=\left( \int_M\max\limits_{y\in B_r(x)}|f(y)|^pdx
\right)^{\frac1p}.$$
Taking $r\to0^+$, we have
$$\lim\limits_{r\to0^+}\Phi_{p,r}(f)=\|\nabla f\|_p\;\text{ and }\lim\limits_{r\to0^+}\llbracket f\rrbracket_{p,r}=\|  f\|_p.
$$

We shall adopt $\mathcal{Y}=\mathcal{Y}_k$ defined in \eqref{eq:Y_k}. 
Then, for any $k\in \mathbb{N}_+$, the $\mathcal{Y}_k$-type global extension constants 
satisfy
$$
\lim\limits_{r\to0^+}c_k(\Phi_{p,r},\llbracket \cdot\rrbracket_{p,r})=c_k( \|\nabla \cdot\|_p,\| \cdot\|_p)$$
where $\big(c_k( \|\nabla \cdot\|_p,\| \cdot\|_p)\big)^p$ equals the $k$-th min-max variational eigenvalue of $p$-Laplacian on $M$.
Therefore, we obtain:
\begin{theorem}
The quantity $p\cdot c_k( \|\nabla \cdot\|_p,\| \cdot\|_p)$ is increasing with respect to  $p\in[1,\infty)$, where $c_k( \|\nabla \cdot\|_p,\| \cdot\|_p)$ equals the $p$-root of  the $k$-th min-max variational eigenvalue of $p$-Laplacian on the Riemannian manifold $M$.  
\end{theorem}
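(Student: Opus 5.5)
The plan is to derive the statement from Theorem \ref{thm:critical-inequality}, applied to the family $\varphi_x$ at fixed $r>0$, and then pass to the limit $r\to0^+$.

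\textbf{Step 1: monotonicity at fixed $r$.} Fix $r>0$ and take $p=q$, $s=t$ in \eqref{eq:c-monotone1}. The constraint $ps'=qt'$ then holds automatically. We obtain
$$c_k(\Phi_{p,r},\llbracket\cdot\rrbracket_{p,r})\le t\, c_k(\Phi_{pt,r},\llbracket\cdot\rrbracket_{pt,r})\qquad (t\ge1).$$
Multiplying by $p$ shows that $p\mapsto p\,c_k(\Phi_{p,r},\llbracket\cdot\rrbracket_{p,r})$ is nondecreasing on $[1,\infty)$. Before using this, I would check that the hypotheses of Definition \ref{def:Phi-integration} hold:
\begin{itemize}
\item $\varphi_x\ge0$ and $\varphi_x(M)=\varphi_x(\emptyset)=0$, since $\partial M$ is empty or lies outside the interior balls.
\item $\widehat{\varphi}_x(f)$ is the normalized local total variation on $B_r(x)$. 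This follows from the coarea formula applied to level sets, for Lipschitz $f$.
\item $|f|_{\varphi_x}=\sup_{B_r(x)}|f|$. This holds because a superlevel set of $f$ has no boundary inside $B_r(x)$ exactly when $B_r(x)$ lies entirely inside or entirely outside it.
\end{itemize}
The admissible family is $\mathcal{Y}_k$, whose genus is preserved by the odd homeomorphic Mazur maps.

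\textbf{Step 2: passage to $r\to0^+$.} The goal is
$$\lim_{r\to0^+}c_k(\Phi_{p,r},\llbracket\cdot\rrbracket_{p,r})=c_k(\|\nabla\cdot\|_p,\|\cdot\|_p)$$
for each fixed $p$. Monotonicity is preserved under pointwise limits, so Step 1 then gives the claim directly. I would argue in two directions.

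For the upper bound on the $\limsup$, choose $Y$ to be a compact symmetric set of genus $\ge k$ that is nearly optimal for $c_k(\|\nabla\cdot\|_p,\|\cdot\|_p)$, with $Y$ inside smooth functions (by density). On a compact set of smooth functions the convergences $\Phi_{p,r}(f)\to\|\nabla f\|_p$ and $\llbracket f\rrbracket_{p,r}\to\|f\|_p$ are uniform, because $\nabla f$ and $f$ are equicontinuous. Hence $\sup_Y$ of the ratio converges.

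For the lower bound on the $\liminf$, use two pointwise inequalities:
\begin{itemize}
\item $\llbracket f\rrbracket_{p,r}\ge\|f\|_p$, since $\sup_{B_r(x)}|f|\ge|f(x)|$.
\item $\Phi_{p,r}(f)\ge\|\nabla f\|_p-o(1)$ uniformly on a suitable class. By Jensen's inequality $\Phi_{p,r}(f)\le\|M_r|\nabla f|\|_p$, which points the wrong way, so this bound must come from $L^p$ convergence of averages.
\end{itemize}

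\textbf{Main obstacle.} The hard step is the $\liminf$ direction. The infimum over $Y$ is taken over arbitrary sets, and a nearly optimal $Y$ for the $r$-problem need not be compact in $W^{1,p}$, so uniform convergence is not available. My first attempt would be to restrict $\mathcal{Y}_k$ to compact subsets of a fixed finite-dimensional space of smooth functions, and to show that this restriction does not change either min-max value. For the $p$-Laplacian value this follows from standard Galerkin approximation. For the $r$-problem, I would control the difference by a Lipschitz estimate of the form $|\Phi_{p,r}(f)-\|\nabla f\|_p|\le\omega(r)\|f\|_{C^2}$ on bounded sets of that finite-dimensional space.

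If that route fails, an alternative for $\mathcal{Y}_k$ is to prove the monotonicity directly at $r=0$. One reruns the argument behind Theorem \ref{thm:critical-inequality} with the Mazur map $f\mapsto|f|^{t-1}f$. That gives $\|\nabla(|f|^{t-1}f)\|_{p}\le t\,\|\nabla f\|_{pt}\,\|f\|_{pt}^{t-1}$ by Hölder's inequality, together with $\||f|^{t-1}f\|_p=\|f\|_{pt}^t$. These two facts give $c_k(p)\le t\,c_k(pt)$ immediately, and this sidesteps the limit in $r$ entirely.
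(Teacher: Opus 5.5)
Your Steps 1--2 are exactly the paper's argument. The paper applies Theorem~\ref{thm:critical-inequality} at fixed $r$ and then asserts, from the pointwise limits alone, that $c_k(\Phi_{p,r},\llbracket\cdot\rrbracket_{p,r})\to c_k(\|\nabla\cdot\|_p,\|\cdot\|_p)$. You were right to distrust the $\liminf$ half. In fact, in dimension $n\ge2$ the asserted limit is false whenever the right-hand side is positive, so this route cannot be repaired as set up.

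Here is a counterexample. Fix $r$, pick $x_1,\dots,x_k$ with pairwise distances $>2r+2\delta$, and set $f_i(y)=\max\{0,1-d(y,x_i)/\delta\}$ with $\delta\ll r$.
\begin{itemize}
\item Numerator: $\widehat{\varphi}_x(f_i)\le C\delta^{n-1}/\mathcal{H}^n(B_r(x))$, and it vanishes unless $d(x,x_i)<r+\delta$. Hence $\Phi_{p,r}(f_i)\lesssim r^{n/p}\,\delta^{n-1}r^{-n}$.
\item Denominator: $\sup_{B_r(x)}|f_i|=1$ whenever $d(x,x_i)<r$, so $\llbracket f_i\rrbracket_{p,r}\gtrsim r^{n/p}$.
\item Combinations: each $B_r(x)$ meets at most one support. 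So for $f=\sum_ia_if_i$ both $\Phi_{p,r}(f)^p$ and $\llbracket f\rrbracket_{p,r}^p$ split as $\sum_i|a_i|^p(\cdots)$.
\end{itemize}
On the unit sphere of $\mathrm{span}\{f_1,\dots,f_k\}$, a set of genus $k$, the quotient is therefore $\lesssim\delta^{n-1}r^{-n}$. Letting $\delta\to0$ gives $c_k(\Phi_{p,r},\llbracket\cdot\rrbracket_{p,r})=0$ for every $r>0$. Meanwhile $c_k(\|\nabla\cdot\|_p,\|\cdot\|_p)>0$, for example when $k\ge2$ on a closed connected $M$.

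Your proposed repair does not help. Restricting $\mathcal{Y}_k$ to a fixed finite-dimensional space of smooth functions destroys the invariance under Mazur maps on which Theorem~\ref{thm:critical-inequality} rests. On the unrestricted family the value is $0$. Note also that your bullet $\llbracket f\rrbracket_{p,r}\ge\|f\|_p$ serves the $\limsup$ half, not the $\liminf$. Both natural comparisons push the regularized constant downward, which is a symptom of this failure.

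Your fallback, by contrast, is correct, and it is the proof to give. Write $c_k(p):=c_k(\|\nabla\cdot\|_p,\|\cdot\|_p)$ and $g=|f|^{t-1}f$.
\begin{itemize}
\item The chain rule gives $\nabla g=t|f|^{t-1}\nabla f$. H\"older with exponents $t,t'$ then gives $\|\nabla g\|_p\le t\|\nabla f\|_{pt}\|f\|_{pt}^{t-1}$.
\item Also $\|g\|_p=\|f\|_{pt}^t$. Hence $\|\nabla g\|_p/\|g\|_p\le t\,\|\nabla f\|_{pt}/\|f\|_{pt}$.
\item You should state explicitly that $f\mapsto|f|^{t-1}f$ is odd, injective and continuous from $W^{1,pt}$ (resp.\ $W^{1,pt}_0$) into $W^{1,p}$ (resp.\ $W^{1,p}_0$).
\item Genus cannot decrease under odd continuous maps. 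So this map sends admissible sets of the $pt$-problem to admissible sets of the $p$-problem.
\end{itemize}
The min-max then yields $c_k(p)\le t\,c_k(pt)$, i.e.\ $p\,c_k(p)\le pt\,c_k(pt)$ for all $t\ge1$.

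Relative to the paper, this is Step~1.1 of the proof of Theorem~\ref{thm:critical-inequality}, carried out directly on the local functional. The exact chain rule replaces the Choquet/coarea change of variables. The pointwise value $|f(x)|$ replaces the intrinsic sup-norm $|f|_{\varphi_x}$. As a result there is no regularization parameter and no limit to interchange with the min-max. The paper's detour would only buy presenting the theorem as a special case of the abstract monotonicity inequality, and for the reason above it does not actually deliver that.
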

The above result covers Theorem 2 in \cite{Jose21} and Theorem 3.2 in \cite{Lindqvist93}. 

For a compact Riemannian manifold without boundary, $c_2( \|\nabla \cdot\|_1,\| \cdot\|_1)$ is the Cheeger constant; while for a compact Riemannian manifold with boundary, $c_1( \|\nabla \cdot\|_1,\| \cdot\|_1)$ is the Cheeger constant. In any case, $c_k( \|\nabla \cdot\|_2,\| \cdot\|_2)=\sqrt{\lambda_k(L_M)}$, where $\lambda_k(L_M)$ is the $k$-th eigenvalue of the Laplace-Beltrami operator on $M$. 
In consequence, the above result covers the classical Cheeger inequality.
\end{example}

Theorem \ref{thm:critical-inequality} implies the following proposition.

\begin{prop}
For any $k\ge 1$, $r>0$, for any $p,q,s,t\ge1$ with $p(1-\frac1t)s'=q$, 
where $s'$ is the H\"older conjugate of $s$,  we have the following inequality 

$$c_k(\Phi_{p,r},\llbracket \cdot\rrbracket_{q,r})\le t\cdot c_k(\Phi_{ps,r},\llbracket \cdot\rrbracket_{qt,r})$$

\end{prop}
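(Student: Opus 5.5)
This proposition is a direct specialization of Theorem \ref{thm:critical-inequality}, with $E=J=M$, $\eta$ the Riemannian volume measure, the family $\varphi_x$ as in the preceding example, and the admissible family $\mathcal{Y}=\mathcal{Y}_k$. The plan is therefore to check that the hypotheses of Theorem \ref{thm:critical-inequality} hold in this setting and that its exponent condition reduces to the one stated here.

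\textbf{Step 1 (setting).} Each $\varphi_x$ is a nonnegative setfunction on $\B$. It satisfies $\varphi_x(\emptyset)=\varphi_x(M)=0$, because $\partial\emptyset=\partial M=\varnothing$ when $M$ is taken without boundary, or after the usual convention of relative boundary. Hence the family $\{\varphi_x\}_{x\in M}$ fits Definition \ref{def:Phi-integration}. From the formulas already recorded in the example, $\Phi_p=\Phi_{p,r}$ and $\llbracket\cdot\rrbracket_q=\llbracket\cdot\rrbracket_{q,r}$. Also, $\mathcal{Y}_k$ is admissible, as noted after \eqref{eq:Y_k}. So $c_k(\Phi_{p,r},\llbracket\cdot\rrbracket_{q,r})$ is exactly the global extension constant $c(\Phi_p,\llbracket\cdot\rrbracket_q)$ of Definition \ref{def:global-extension-cons} for $\mathcal{Y}=\mathcal{Y}_k$.

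\textbf{Step 2 (exponents).} Theorem \ref{thm:critical-inequality} requires $ps'=qt'$, where $t'=t/(t-1)$. This is equivalent to $ps'(1-\frac1t)=q$, which is precisely the hypothesis $p(1-\frac1t)s'=q$. The degenerate case $t=1$ corresponds to $t'=\infty$. There the hypothesis forces $q=0$, which is excluded, so the claim is vacuous; alternatively it is read through the limiting convention used in the theorem.

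\textbf{Step 3 (conclusion).} Invoking \eqref{eq:c-monotone1} with $\mathcal{Y}=\mathcal{Y}_k$ then gives
\[
c_k(\Phi_{p,r},\llbracket\cdot\rrbracket_{q,r})\le t\cdot c_k(\Phi_{ps,r},\llbracket\cdot\rrbracket_{qt,r}).
\]

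\textbf{Main obstacle.} The only delicate point is verifying the conditions behind the Choquet computation quoted in the example. One needs $\varphi_x\in\bm{\varPsi}(\B)$, and one needs the formula $\widehat\varphi_x(f)=\mathcal{H}^n(B_r(x))^{-1}\int_{B_r(x)}|\nabla f|$, which comes from the coarea formula for $f$ in the relevant (Lipschitz/BV) class. These are needed so that $\Phi_{p,r}$ genuinely equals the $\Phi_p$ of Definition \ref{def:Phi-integration}. I would take them as given from the example, restricting $f$ to the class where the coarea formula applies.
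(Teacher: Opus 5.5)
Your route is the paper's: the proposition is stated there as an immediate consequence of Theorem~\ref{thm:critical-inequality} applied to the family $\{\varphi_x\}$ with $\mathcal{Y}=\mathcal{Y}_k$. Your exponent check ($ps'=qt'\iff p(1-\frac1t)s'=q$) is correct.

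\textbf{The gap.} The step that fails as written is the identification $\llbracket\cdot\rrbracket_q=\llbracket\cdot\rrbracket_{q,r}$ in Step~1. You import it from the example's claim $|f|_{\varphi_x}=\max_{B_r(x)}|f|$, and that claim is false in general. Suppose $f$ is constant on the open ball $B_r(x)$. Then every level set $\{f\ge s\}$ either contains or misses $B_r(x)$, so $\varphi_x\{f\ge s\}=0$ for all $s$ and hence $|f|_{\varphi_x}=0$. But $\max_{B_r(x)}|f|>0$ unless $f\equiv 0$ there; for a nonzero constant $f$ the intrinsic denominator is even $0$.

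What is true is only $|f|_{\varphi_x}\le\sup_{B_r(x)}|f|$. Write $c^{\mathrm{intr}}$ and $c^{\mathrm{max}}$ for the $\mathcal{Y}_k$-constants built with $|f|_{\varphi_x}$ and with $\sup_{B_r(x)}|f|$, respectively. Then $c^{\mathrm{intr}}\ge c^{\mathrm{max}}$, and the theorem's inequality does not transfer. From $c^{\mathrm{max}}_{p,q}\le c^{\mathrm{intr}}_{p,q}\le t\,c^{\mathrm{intr}}_{ps,qt}$ you cannot reach $t\,c^{\mathrm{max}}_{ps,qt}$, because $c^{\mathrm{intr}}_{ps,qt}\ge c^{\mathrm{max}}_{ps,qt}$ goes the wrong way.

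\textbf{The repair.} Use the proof of the theorem rather than its statement. Step~1.1 there uses $|f|_{\varphi_e}$ only through two facts:
(i) $\varphi_e\{f\ge y\}=0$ whenever $|y|>|f|_{\varphi_e}$, which justifies $|y|^{t-1}\varphi_e\{f\ge y\}\le|f|_{\varphi_e}^{t-1}\varphi_e\{f\ge y\}$;
(ii) $|f^t|_{\varphi_e}=|f|_{\varphi_e}^t$.
Both hold for $N_x(f):=\sup_{B_r(x)}|f|$ in place of $|f|_{\varphi_x}$. Fact (i) holds because for $|y|>N_x(f)$ the set $\{f\ge y\}$ again contains or misses the open ball; fact (ii) is trivial. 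Repeating the H\"older step with $(t-1)ps'=qt$ gives, for every Lipschitz $f\neq 0$,
\[
\frac{\Phi_{p,r}(f^t)}{\llbracket f^t\rrbracket_{q,r}}\le t\,\frac{\Phi_{ps,r}(f)}{\llbracket f\rrbracket_{qt,r}}.
\]
Step~1.2 then only needs $Y^t\in\mathcal{Y}_k$ whenever $Y\in\mathcal{Y}_k$ and $t\ge 1$.

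This one-sided requirement also resolves a tension with your restriction to the coarea class. On Lipschitz functions, $\mathcal{Y}_k$ is not admissible in the paper's sense, since the Mazur map of order $1/t$ destroys Lipschitz regularity. However, $f\mapsto f^t$ with $t\ge1$ is odd and continuous, maps Lipschitz functions to Lipschitz functions, and does not lower the genus. That one-sided invariance is all the min--max step uses.

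A minor point: for $t=s=1$ the hypothesis reads $0\cdot\infty=q$ rather than forcing $q=0$. Nothing is lost, since in that case both sides coincide.
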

Taking $r\to0^+$ in the above proposition, we have the following:
\begin{theorem}\label{thm:grad-pqst}
For any $k\ge 1$, 
  $$c_k( \|\nabla \cdot\|_p,\| \cdot\|_q)\le t\cdot c_k( \|\nabla \cdot\|_{ps},\| \cdot\|_{qt})$$
whenever $p,q,s,t\ge1$ and $ps'=qt'$,  where $s'$ and $t'$ are the H\"older conjugate of $s$ and $t$, respectively. 
\end{theorem}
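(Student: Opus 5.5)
The plan is to pass to the limit $r\to0^+$ in the preceding proposition, which is the manifold instance of Theorem \ref{thm:critical-inequality}. Two routes are available: prove convergence of the min-max values, or argue directly on test functions. The direct argument is cleaner, and the limit argument is really its shadow, so I will work with the core of Theorem \ref{thm:critical-inequality}.

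The core is the Mazur-map inequality. For $f\in\Bd$ (take $f$ Lipschitz or $W^{1,\infty}$ so that gradients make sense) and $t\ge1$, set $f^t:=|f|^{t-1}f$. This map is odd and a homeomorphism on bounded sets, so it preserves the Krasnoselskii genus, and $\{f^t:f\in Y\}\in\mathcal{Y}_k$ whenever $Y\in\mathcal{Y}_k$.

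Compute $|\nabla f^t|=t|f|^{t-1}|\nabla f|$. Hölder with exponents $s,s'$ gives
\[
\|\nabla f^t\|_p\le t\,\|\nabla f\|_{ps}\,\||f|^{t-1}\|_{ps'}=t\,\|\nabla f\|_{ps}\,\|f\|_{(t-1)ps'}^{t-1}.
\]
The relation $ps'=qt'$ means $(t-1)ps'=qt$. Also $\|f^t\|_q=\|f\|_{qt}^t$. Combining these,
\[
\frac{\|\nabla f^t\|_p}{\|f^t\|_q}\le t\,\frac{\|\nabla f\|_{ps}}{\|f\|_{qt}}.
\]

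Now fix $Y\in\mathcal{Y}_k$ that is nearly optimal for $c_k(\|\nabla\cdot\|_{ps},\|\cdot\|_{qt})$, and let $Y^t$ be its image under the Mazur map, which is again in $\mathcal{Y}_k$. The left-hand min-max value is then at most $\sup_{g\in Y^t}\|\nabla g\|_p/\|g\|_q$. By the displayed estimate this is at most $t\sup_{f\in Y}\|\nabla f\|_{ps}/\|f\|_{qt}$. Taking the infimum over $Y$ proves the theorem.

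The degenerate cases need separate comment:
\begin{itemize}
\item If $t=1$, then $t'=\infty$, which forces $s'=\infty$, i.e.\ $s=1$, and the inequality is trivial.
\item If $s=1$ with $t>1$, then $\||f|^{t-1}\|_\infty$ appears, and one uses the convention $ps'=\infty=qt'$.
\end{itemize}

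The main obstacle is regularity. The functions in $Y$ should lie in a class where $\nabla f$ is defined and the chain rule holds. One must also check that $f^t$ stays in that class. For $t\ge1$, $f^t$ is locally Lipschitz when $f$ is, so restricting $\mathcal{Y}_k$ to Lipschitz (or $W^{1,\infty}$) functions suffices. This matches how the $r\to0$ limit identifies $c_k(\|\nabla\cdot\|_p,\|\cdot\|_q)$ with the variational eigenvalue values.

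If instead one insists on deducing the theorem from the proposition via $r\to0^+$, the hard step becomes justifying $\lim_r c_k(\Phi_{p,r},\llbracket\cdot\rrbracket_{q,r})=c_k(\|\nabla\cdot\|_p,\|\cdot\|_q)$. This needs uniform convergence of the ratios on compact (finite-dimensional) genus-$k$ sets of smooth functions, followed by a density argument. The direct argument above avoids that.
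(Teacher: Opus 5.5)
Your proof is correct, but it takes a different route from the paper's. The paper deduces Theorem~\ref{thm:grad-pqst} from the preceding proposition. That proposition is Theorem~\ref{thm:critical-inequality} applied to the nonlocal family $\varphi_x(A)=\mathcal{H}^{n-1}(\partial A\cap B_r(x))/\mathcal{H}^n(B_r(x))$, and it gives $c_k(\Phi_{p,r},\llbracket\cdot\rrbracket_{q,r})\le t\,c_k(\Phi_{ps,r},\llbracket\cdot\rrbracket_{qt,r})$ at every scale $r>0$. The paper then lets $r\to0^+$, relying on convergence of these min-max values to $c_k(\|\nabla\cdot\|_p,\|\cdot\|_q)$. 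It asserts this convergence without proof, and writes it out only for $q=p$.

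You instead run Steps~1.1--1.2 of the proof of Theorem~\ref{thm:critical-inequality} directly on the local functional. The chain rule $|\nabla f^t|=t|f|^{t-1}|\nabla f|$ replaces the substitution $x=|y|^{t-1}y$ in the layer-cake integral; it even gives equality where the paper only has $|y|^{t-1}\le|f|_{\varphi_e}^{t-1}$. H\"older on $M$ replaces H\"older on $E$. Your bookkeeping is right: $(t-1)ps'=qt$, $\||f|^{t-1}\|_{ps'}=\|f\|_{qt}^{t-1}$, and $\|f^t\|_q=\|f\|_{qt}^t$.

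What your route buys:
\begin{itemize}
\item It is more elementary.
\item More importantly, it never exchanges $\lim_{r\to0^+}$ with $\inf_Y\sup_{f\in Y}$. The easy comparisons (Jensen for $\Phi_{p,r}$, and $\max_{B_r(x)}|f|\ge|f(x)|$) only bound the nonlocal constants from above, up to a factor $1+o(1)$. That handles the right-hand side in the limit, but the left-hand side needs a lower bound. So the paper's one-line limit step hides real work, which you avoid.
\end{itemize}

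What the paper's route buys: the nonlocal inequality holds at every $r>0$, and the result appears as one more instance of the single monotonicity principle, which is the paper's theme.

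One small precision. Once you restrict to Lipschitz functions, the family is no longer invariant under the inverse Mazur map $g\mapsto|g|^{1/t-1}g$, since that map destroys Lipschitz regularity at zeros of $g$. So the restricted family is not admissible in the paper's sense, and ``homeomorphism'' is not the right description there. You do not need it, though. You only use $\mathrm{genus}(Y^t)\ge\mathrm{genus}(Y)$ with $t\ge1$, and this follows from continuity of the odd map $f\mapsto f^t$ alone. That continuity holds in the sup norm and also in $W^{1,\infty}$. So your argument stands.
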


Theorem \ref{thm:critical-inequality} also implies: 
\begin{prop}\label{pro:volM-mono}For a Riemannian manifold $M$ of finite volume, the quantity $\vol(M)^{1/q-1/p}c_k( \|\nabla \cdot\|_p,\| \cdot\|_q)$ increases on $p$, and decreases on $q$, where $\vol(M)$ denotes the volume of $M$. 
\end{prop}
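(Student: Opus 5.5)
The plan is to prove Proposition \ref{pro:volM-mono} directly from the last assertion of Theorem \ref{thm:critical-inequality}, in the manifold setting already used above, where $E=J=M$ with $\eta$ the Riemannian volume, so $\eta(E)=\vol(M)<\infty$. Fix $r>0$ and $k$. Theorem \ref{thm:critical-inequality} says that $p\mapsto \vol(M)^{-1/p}c_k(\Phi_{p,r},\llbracket\cdot\rrbracket_{q,r})$ is increasing and $q\mapsto \vol(M)^{1/q}c_k(\Phi_{p,r},\llbracket\cdot\rrbracket_{q,r})$ is decreasing. Multiplying by the positive factors $\vol(M)^{1/q}$, respectively $\vol(M)^{-1/p}$, which do not depend on the varied parameter, gives the following. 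For each fixed $r$, the quantity $\vol(M)^{1/q-1/p}c_k(\Phi_{p,r},\llbracket\cdot\rrbracket_{q,r})$ is increasing in $p$ and decreasing in $q$.

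The second step is to let $r\to0^+$, exactly as was done to pass from the preceding proposition to Theorem \ref{thm:grad-pqst}. We use the limits $\Phi_{p,r}(f)\to\|\nabla f\|_p$ and $\llbracket f\rrbracket_{q,r}\to\|f\|_q$. We also use the resulting convergence $c_k(\Phi_{p,r},\llbracket\cdot\rrbracket_{q,r})\to c_k(\|\nabla\cdot\|_p,\|\cdot\|_q)$, stated above for $p=q$ and assumed to hold for general $(p,q)$ on the same grounds. Monotonicity is preserved under pointwise limits: if $p_1\le p_2$, the inequality between the two quantities holds for every $r$, hence also in the limit. The same argument works in $q$.

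The only delicate point is this convergence of the min-max values for $p\ne q$. If needed, I would justify it directly.
\begin{itemize}
\item For the numerator, Jensen's inequality gives $\Phi_{p,r}(f)\le \big(\int_M \fint_{B_r(x)}|\nabla f|^p\,dy\,dx\big)^{1/p}$, which tends to $\|\nabla f\|_p$.
\item For the denominator, $\llbracket f\rrbracket_{q,r}\ge\|f\|_q$.
\item Together these give $\limsup_{r\to0^+} c_k(\Phi_{p,r},\llbracket\cdot\rrbracket_{q,r})\le c_k(\|\nabla\cdot\|_p,\|\cdot\|_q)$. This uses compact symmetric sets $Y$ of smooth functions, on which the ratios converge uniformly.
\item The matching lower bound is obtained from the monotone convergence of $\llbracket f\rrbracket_{q,r}$ and the lower semicontinuity of the averaged gradient norm.
\end{itemize}

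If one prefers to avoid this limit altogether, an alternative route works directly with $\|\nabla\cdot\|_p$ and $\|\cdot\|_q$ via Hölder's inequality. For $p_1\le p_2$ we have $\vol(M)^{-1/p_1}\|\nabla f\|_{p_1}\le\vol(M)^{-1/p_2}\|\nabla f\|_{p_2}$ pointwise in $f$. Similarly, $\vol(M)^{-1/q}\|f\|_q$ is increasing in $q$. Since these inequalities hold for each $f$, taking $\sup_{f\in Y}$ and then $\inf_{Y\in\mathcal{Y}_k}$ preserves them. This yields both monotonicity claims immediately and sidesteps the $r\to0^+$ issue.
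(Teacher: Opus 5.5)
Your proposal is correct. Your main route is the one the paper indicates: apply the last clause of Theorem~\ref{thm:critical-inequality} to $\Phi_{p,r}$ and $\llbracket\cdot\rrbracket_{q,r}$ with $\eta(E)=\vol(M)$, then let $r\to0^+$. Like the paper, it depends on the convergence $c_k(\Phi_{p,r},\llbracket\cdot\rrbracket_{q,r})\to c_k(\|\nabla\cdot\|_p,\|\cdot\|_q)$, which the paper asserts without proof.

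Your sketch of the lower half of that convergence is not sufficient. The near-optimal sets $Y_r$ depend on $r$ and may contain functions whose transitions happen on scales much smaller than $r$. For such functions $\Phi_{p,r}(f)$ is far below $\|\nabla f\|_p$ when $p>1$, and $\llbracket f\rrbracket_{q,r}\ge\|f\|_q$ also pushes the ratio down. So semicontinuity for each fixed $f$ tells you nothing about $\liminf_{r\to0^+}\inf_Y\sup_{f\in Y}$. A genuine $\Gamma$-convergence argument, with an odd continuous regularization at scale $r$, would be needed.

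Your alternative route avoids all of this, and it is the proof I would present. For each fixed $f$, Lemma~\ref{lem:increase-mean-power} with the normalized volume measure shows:
\begin{itemize}
\item $\vol(M)^{-1/p}\|\nabla f\|_p$ is increasing in $p$;
\item $\vol(M)^{-1/q}\|f\|_q$ is increasing in $q$.
\end{itemize}
Hence $\vol(M)^{1/q-1/p}\|\nabla f\|_p/\|f\|_q$ increases in $p$ and decreases in $q$. The positive constant passes through $\sup_{f\in Y}$ and $\inf_{Y\in\mathcal{Y}_k}$, and $\mathcal{Y}_k$ does not depend on $(p,q)$.

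This is exactly the argument the paper uses for the last clause of Theorem~\ref{thm:critical-inequality}, run directly on the limiting functionals instead of on $\Phi_{p,r}$. It therefore needs no localization and no limit in $r$.

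One caveat if you set the min-max in $W^{1,p}$ with its own topology: the admissible family is then not literally fixed. But for $p_1\le p_2$, the continuous embedding $W^{1,p_2}\hookrightarrow W^{1,p_1}$ on finite volume means every genus-$\ge k$ set for $p_2$ is also genus-$\ge k$ for $p_1$. So the family only shrinks as $p$ grows, which is the harmless direction.
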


\begin{example}[Torsional rigidity
]
Given an open bounded set $\Omega$ of $\mathbb{R}^N$, the first Dirichlet eigenvalue $\lambda(\Omega)$ and the torsional rigidity $T(\Omega)$ are defined as follows:
\[\lambda(\Omega):=\min_{u\in H_{0}^{1}(\Omega)\backslash \{0\}}\frac{\int_{\Omega}|\nabla u(x)|^{2}\,\mathrm{d}x}{\int_{\Omega}|u(x)|^{2}\,\mathrm{d}x}
\quad\text{and}\quad
T(\Omega):= \max_{u\in H_{0}^{1}(\Omega)\backslash \{0\}}\frac{\left(\int_{\Omega}u(x)\,\mathrm{d}x\right)^{2}}{\int_{\Omega}|\nabla u(x)|^{2}\,\mathrm{d}x}.\]
The classical torsional rigidity has been investigated by many authors (see \cite{Berg21,Briani22,Crasta,Della}). 
Our result finds some new properties on $p$-torsional rigidity in  Riemannian manifolds with boundary. 
Let \[\lambda_p(M):=\min_{u\in W_{0}^{1,p}(M)\backslash \{0\}}\frac{\int_{\Omega}|\nabla u(x)|^{p}\,\mathrm{d}x}{\int_{\Omega}|u(x)|^{p}\,\mathrm{d}x}
\quad\text{and}\quad
T_p(M):= \max_{u\in W_{0}^{1,p}(M)\backslash \{0\}}\frac{\left|\int_{\Omega}u(x)\,\mathrm{d}x\right|^{p}}{\int_{\Omega}|\nabla u(x)|^{p}\,\mathrm{d}x}.\]
\end{example}

In particular, taking $M=\Omega\subset\R^n$ as a bounded Euclidean domain or taking $M$ as a compact Riemannian manifold with boundary, the torsional rigidity can be formulated as $T_p(M):=\big(c_1( \|\nabla \cdot\|_p,\| \cdot\|_1)\big)^{-p}$, and the first $p$-Laplacian eigenvalue can be written as $\lambda_p(M):=\big(c_1( \|\nabla \cdot\|_p,\| \cdot\|_p)\big)^{p}$. 
Then, the monotonicity property in Proposition \ref{pro:volM-mono} implies: 
\begin{corollary}Let $M$ be a compact Riemannian manifold with boundary. Then, for any $ p\ge 1$, we have 
$$T_p(M)\lambda_p(M)\le \vol(M)^{p-1}$$    
\end{corollary}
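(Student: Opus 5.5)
The plan is to read the corollary off Proposition \ref{pro:volM-mono} with $k=1$, comparing the two exponent pairs $(p,1)$ and $(p,p)$; these have the same gradient exponent and different norm exponents. By that proposition, $q\mapsto \vol(M)^{1/q-1/p}c_1(\|\nabla\cdot\|_p,\|\cdot\|_q)$ is decreasing in $q$. Since $1\le p$, this gives
$$\vol(M)^{1-1/p}\,c_1(\|\nabla \cdot\|_p,\|\cdot\|_1)\;\ge\;\vol(M)^{0}\,c_1(\|\nabla \cdot\|_p,\|\cdot\|_p).$$

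Next I substitute the identifications stated just before the corollary. Let $a:=c_1(\|\nabla\cdot\|_p,\|\cdot\|_1)$ and $b:=c_1(\|\nabla\cdot\|_p,\|\cdot\|_p)$. Then $T_p(M)=a^{-p}$ and $\lambda_p(M)=b^p$. Raising the displayed inequality to the power $p$ (both sides are nonnegative) gives $\vol(M)^{p-1}a^p\ge b^p$. This is exactly $T_p(M)\lambda_p(M)=b^p a^{-p}\le \vol(M)^{p-1}$, provided $0<a<\infty$.

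These identifications rest on two facts. First, the admissible family for $c_1$ on a manifold with boundary is taken as $\mathcal{Y}_1$, i.e.\ symmetric pairs $\{f,-f\}$ with $f\in W^{1,p}_0$. Then $\inf_Y\sup_{f\in Y}$ reduces to an infimum of the Rayleigh quotients $\|\nabla f\|_p/\|f\|_q$, since the quotient is even. Second, $\|f\|_1\ge|\int f|$, with equality for nonnegative $f$. Replacing $u$ by $|u|$ leaves $\|\nabla u\|_p$ unchanged and can only increase $|\int u|$, so the maximizing problem for $T_p$ is attained on nonnegative functions, where the two quotients agree. These points belong to the formulation the paper already asserts, so I take them as given.

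The only genuine obstacle is checking that Proposition \ref{pro:volM-mono}, which is Theorem \ref{thm:critical-inequality} applied with $E=J=M$ and then passed to the limit $r\to0^+$, is valid in the Dirichlet setting. The concern is that $W^{1,p}_0$ test functions and the genus family $\mathcal{Y}_1$ must survive the limit procedure used for the $c_k$. If needed, I would instead prove the needed direction directly via Hölder: $\|f\|_1\le \vol(M)^{1-1/p}\|f\|_p$. This gives $\|\nabla f\|_p/\|f\|_1\ge \vol(M)^{-(1-1/p)}\|\nabla f\|_p/\|f\|_p\ge \vol(M)^{-(1-1/p)}b$ for every admissible $f$. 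Taking the infimum over $f$ yields $a\ge\vol(M)^{-(1-1/p)}b$, which is exactly the monotonicity in $q$ that is used, so the corollary follows independently of the limit argument.
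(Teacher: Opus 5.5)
Your proposal is correct and matches the paper's argument: apply the $q$-monotonicity of Proposition \ref{pro:volM-mono} with $k=1$ between $q=1$ and $q=p$. Then substitute $T_p(M)=c_1(\|\nabla\cdot\|_p,\|\cdot\|_1)^{-p}$ and $\lambda_p(M)=c_1(\|\nabla\cdot\|_p,\|\cdot\|_p)^{p}$. Your H\"older fallback $\|f\|_1\le \vol(M)^{1-1/p}\|f\|_p$ is the same power-mean mechanism (Lemma \ref{lem:increase-mean-power}) that drives that monotonicity, applied directly to the limiting quotients. It makes the corollary independent of the $r\to0^+$ limiting procedure, which the paper does not spell out.
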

This includes the classical P\'olya-Szeg\"o inequality (when $p=2$) on torsional rigidity \cite{Polya51}, and the $p$-P\'olya-Szeg\"o inequality \cite{Della}, as special cases. 







Moreover, Theorem \ref{thm:grad-pqst} yields: 
\begin{corollary}For any $p\in(1,\infty]$, 
$$c_1( \|\nabla \cdot\|_p,\| \cdot\|_1)\le p'\cdot c_1( \|\nabla \cdot\|_{\infty},\| \cdot\|_{p'})$$    
where $p'$ is the H\"older conjugate of $p$. 
\end{corollary}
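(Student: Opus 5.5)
This inequality is a specialization of Theorem \ref{thm:grad-pqst} with $k=1$. I would pick the exponents so that the left side is $c_1(\|\nabla\cdot\|_p,\|\cdot\|_1)$ and the right side involves $\|\nabla\cdot\|_\infty$ and $\|\cdot\|_{p'}$. So I set the gradient exponent to $p$, the norm exponent $q=1$, and look for $s,t$ with $ps=\infty$ and $qt=p'$. Thus $t=p'$, and $s=\infty$, so that $s'=1$.

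The first step is to check the compatibility condition $ps'=qt'$. We have $ps'=p\cdot 1=p$. Also $t'=(p')'=p$, so $qt'=p$, and the condition holds. Theorem \ref{thm:grad-pqst} then gives
$c_1(\|\nabla\cdot\|_p,\|\cdot\|_1)\le p'\, c_1(\|\nabla\cdot\|_{\infty},\|\cdot\|_{p'})$. When $p=\infty$ we have $p'=1$, $t=1$, and the statement is trivial with $s$ arbitrary.

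The only real obstacle is that Theorem \ref{thm:grad-pqst} is stated for finite exponents, while here $s=\infty$. I would handle this by revisiting the proof of Theorem \ref{thm:critical-inequality} with $s=\infty$. That proof is Hölder's inequality applied to the integrand over $E$: replacing $f$ by its Mazur image $f^{t}$ and using $|\nabla f^t|=t|f|^{t-1}|\nabla f|$, one bounds $\int|f|^{t-1}|\nabla f|$ by $\|\nabla f\|_\infty\,\|f\|_{t-1}^{t-1}$. This is exactly the endpoint $s=\infty$, $s'=1$ of Hölder's inequality, and it is valid.

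Alternatively, I would apply Theorem \ref{thm:grad-pqst} with finite $s$ and matching $t_s$ satisfying $ps'=t_s'$, and let $s\to\infty$. Then $t_s\to p'$. Passing to the limit needs $c_1(\|\nabla\cdot\|_{ps},\|\cdot\|_{t_s})\to c_1(\|\nabla\cdot\|_\infty,\|\cdot\|_{p'})$. This would follow on a finite-volume $M$ from $\|g\|_{r}\to\|g\|_\infty$, monotonicity in the exponents (Proposition \ref{pro:volM-mono}), and upper semicontinuity of the inf-sup.
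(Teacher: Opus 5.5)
Your proposal is correct and follows the paper's route: the paper obtains this corollary directly from Theorem \ref{thm:grad-pqst} with $q=1$, $s=\infty$ (so $s'=1$) and $t=p'$, exactly as you do, and it tacitly allows infinite exponents there. Your endpoint sanity check is also fine, but it should be stated for the $L^p$ norm rather than for $\int|f|^{t-1}|\nabla f|$: $\|\nabla f^{p'}\|_p=p'\,\||f|^{p'-1}\nabla f\|_p\le p'\|\nabla f\|_\infty\big(\int|f|^{p'}\big)^{1/p}$, using $(p'-1)p=p'$, and dividing by $\|f^{p'}\|_1=\int|f|^{p'}$ gives the bound $p'\|\nabla f\|_\infty/\|f\|_{p'}$.
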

This corollary, applying to the setting of Euclidean domains, can be reformulated as
\begin{prop}Given a bounded Euclidean domain $\Omega$, for any $p> 1$, we have 
$$ T_p(\Omega)\ge (1-\frac1p)^p \big(\int_\Omega \mathrm{dist}_\Omega^{p'}(x)dx\big)^{p-1} $$
where $\mathrm{dist}_\Omega$ denotes the distance function to $\partial\Omega$, i.e., $\mathrm{dist}_\Omega(x)=\inf_{y\in\partial\Omega}|x-y|$ with $|\cdot|$ being the Euclidean norm. 
In particular, 
$$  T(\Omega)\ge\frac14  \int_\Omega \mathrm{dist}_\Omega^2(x)dx .$$
\end{prop}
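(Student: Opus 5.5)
The plan is to obtain the inequality from the preceding corollary, $c_1( \|\nabla \cdot\|_p,\| \cdot\|_1)\le p'\cdot c_1( \|\nabla \cdot\|_{\infty},\| \cdot\|_{p'})$, applied with $M=\Omega$. The distance function $\mathrm{dist}_\Omega$ will serve as a test function for the right-hand side.

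First I identify both sides with concrete variational quantities. With $\mathcal{Y}_1$ (symmetric pairs $\{\pm f\}$), $c_1(\|\nabla\cdot\|_p,\|\cdot\|_q)$ is simply $\inf_{u\in W^{1,p}_0(\Omega)\setminus\{0\}}\|\nabla u\|_p/\|u\|_q$, since the quotient is even in $u$. For $q=1$ this infimum relates to $T_p(\Omega)$ as follows. Replacing $u$ by $|u|$ does not change $\|\nabla u\|_p$, and $|\int_\Omega u|\le \int_\Omega|u|$ with equality for nonnegative $u$. Hence the maximum defining $T_p(\Omega)$ may be taken over $u\ge0$, and
$$T_p(\Omega)=\Big(\inf_{u}\frac{\|\nabla u\|_p}{\|u\|_1}\Big)^{-p}=c_1( \|\nabla \cdot\|_p,\| \cdot\|_1)^{-p},$$
which matches the formulation stated just before the corollary.

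Second, I bound $c_1(\|\nabla\cdot\|_\infty,\|\cdot\|_{p'})$ from above by inserting $u=\mathrm{dist}_\Omega$. This function is $1$-Lipschitz and vanishes on $\partial\Omega$, so it lies in $W^{1,\infty}_0(\Omega)$, which is the admissible class for the $\infty$-gradient quotient. Moreover $|\nabla \mathrm{dist}_\Omega|\le1$ a.e. (in fact $=1$ a.e. by Rademacher's theorem and the structure of the distance function, though $\le1$ suffices). Therefore
$$c_1( \|\nabla \cdot\|_{\infty},\| \cdot\|_{p'})\le \frac{1}{\|\mathrm{dist}_\Omega\|_{p'}}=\Big(\int_\Omega \mathrm{dist}_\Omega^{p'}(x)dx\Big)^{-1/p'}.$$

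Combining the two steps with the corollary gives $c_1( \|\nabla \cdot\|_p,\| \cdot\|_1)\le p'\,\|\mathrm{dist}_\Omega\|_{p'}^{-1}$. Raising to the power $-p$ yields $T_p(\Omega)\ge (p')^{-p}\big(\int_\Omega\mathrm{dist}_\Omega^{p'}\big)^{p/p'}$. Since $1/p'=1-1/p$ and $p/p'=p-1$, this is exactly the claimed bound. Taking $p=2$ (so $p'=2$) gives $T(\Omega)\ge\frac14\int_\Omega\mathrm{dist}_\Omega^2$.

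The only delicate point is the legitimacy of the test function and of the identification of $c_1$ with the Sobolev infima, including the $p=\infty$ endpoint for the gradient. This means checking that the admissible class underlying the corollary (Lipschitz functions vanishing on the boundary, the limit of the $r\to0^+$ construction) contains $\mathrm{dist}_\Omega$. Since the corollary is taken as given, I expect this to reduce to the routine observation that $\mathrm{dist}_\Omega$ is Lipschitz with zero trace.
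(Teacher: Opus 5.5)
Your proposal is correct and is essentially the paper's argument: apply the corollary $c_1(\|\nabla\cdot\|_p,\|\cdot\|_1)\le p'\,c_1(\|\nabla\cdot\|_\infty,\|\cdot\|_{p'})$, use $T_p(\Omega)=c_1(\|\nabla\cdot\|_p,\|\cdot\|_1)^{-p}$, and evaluate the right-hand constant at $\mathrm{dist}_\Omega$. The paper's word ``reformulated'' tacitly uses the exact equality $c_1(\|\nabla\cdot\|_\infty,\|\cdot\|_{p'})=\|\mathrm{dist}_\Omega\|_{p'}^{-1}$, which holds because any $u$ with $\|\nabla u\|_\infty\le 1$ vanishing on $\partial\Omega$ satisfies $|u|\le\mathrm{dist}_\Omega$; as you note, however, only the upper bound given by the test function is needed for the inequality.
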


We omit the detailed proofs of all the results in this section, since we have given the key ingredients‌ for the proofs by explanations and discussions.

\subsection{A monotonicity inequality on metric measure space}

Following the notation in \cite{DePonti21b}, in this section we consider a complete and separable metric space $(X, \mathsf{d})$ endowed with a finite Borel measure $\mm$ (i.e., $\mm(X)<\infty$). The triple $(X, \mathsf{d}, \mm)$ is called \emph{metric measure space}, m.m.s. for short.
The space of real-valued Lipschitz (resp.  
Lipschitz with bounded support, Lipschitz on bounded sets) functions over $X$ will be denoted by $\mathsf{Lip}(X)$ (resp. 
$\mathsf{Lip}_{bs}(X)$, $\mathsf{Lip}_{loc}(X)$). The local Lipschitz constant (or slope) of a function $f:X\rightarrow \R$ at $x\in X$ is defined by
\begin{equation*}
\mathrm{lip}(f)(x):=\limsup_{y\rightarrow x} \frac{|f(y)-f(x)|}{\di(y,x)}\, ,
\end{equation*}  
with the convention $\mathrm{lip}(f)(x)=0$ if $x$ is an isolated point.

We introduce the following relevant definitions:  for any $1<p<\infty$ we set
\begin{equation}\label{eq:defla1Intro}
\lambda_{2,p}(X)= \inf\bigg\{\frac{\int_X \mathrm{lip}(f)^p \di\mm}{\int_X |f|^p\,\di\mm}: \ 0\not\equiv f\in \mathsf{Lip}_{bs}(X), \int_X |f|^{p-2}f\, \di\mm=0\bigg\}
\end{equation} 
where $\lambda_{2,p}(X)$ is called the $p$-spectral gap \cite{Cavalletti}. 
This can be generalized to the case where $k\ge 1$ and $1\le p<\infty$:
\begin{equation}\label{eq:lambda-k,p}
\lambda_{k,p}(X)= \inf\bigg\{\sup_{f\in S}\frac{\int_X \mathrm{lip}(f)^p \di\mm}{\int_X |f|^p\,\di\mm}:  \text{centrally symmetric }S\subset \mathsf{Lip}_{bs}(X)\setminus\{0\}, \mathrm{genus}(S)\ge k\bigg\}.    
\end{equation}
Note that $\lambda_{2,p}(X)$ has many equivalent representations, apart from \eqref{eq:lambda-k,p} for $k=2$, and \eqref{eq:defla1Intro}, we also have
\begin{equation*}
\lambda_{2,p}(X)=\inf\left\{\frac{1}{c_p^p(f)}\int_X \mathrm{lip}(f)^p\di\mm \ : \ f\in \mathsf{Lip}_{bs}, \ f \ \textrm{non $\mm$-a.e. constant} \right\},
\end{equation*}
where 
$$c^p_p(f):=\inf_{a\in \mathbb{R}}\int_X |f-a|^p\,\di\mm\, .$$

\medskip

A function $f\in L^1(X,\mm)$ belongs to the 
space ${\rm BV}(X,\di,\mm)$ of functions with bounded variation if there exists a sequence $(f_n)_{n\in\mathbb{N}}\in \mathsf{Lip}_{loc}(X)$ converging to $f$ in $L^1(X,\mm)$ and such that
$$\limsup_{n\to \infty}\int_X \mathrm{lip}(f_n)\, \di\mm <+\infty\, .$$

If $f\in {\rm BV}(X,\di,\mm)$ and $A\subset X$ is open, we define 
\begin{equation*}\label{eq:defTV}
|Df|(A):=\inf\bigg\{\liminf_{n\rightarrow \infty}\int_A \mathrm{lip}(f_n)\,\di\mm: f_n\in \mathsf{Lip}_{loc}(X), f_n\rightarrow f \ \mathrm{in} \ L^1(A,\mm)\bigg\}.
\end{equation*}

It is known that this function is the restriction to open sets of a finite Borel measure, called \textit{total variation} of $f$ and denoted by $|Df|$. By the  definition of $|Df|(X)$, it is immediate to see that for all $f, f_n\in {\rm BV}(X,\di,\mm)$
\begin{equation*}\label{eq:lscbv}
|Df|(X)\leq \liminf_n |Df_n|(X) \quad \textrm{whenever} \ f_n\rightarrow f \ \textrm{in} \ L^1(X,\mm).
\end{equation*}
Moreover, for all $1$-Lipschitz $\varphi:\R\to \R$  with $\varphi(0)=0$ we have 
$$|D(\varphi\circ f)|(X)\leq |Df|(X).$$

Given a Borel set $A$ of finite measure, we say that $A$ is a set of finite perimeter if $\chi_A\in {\rm BV}(X,\di,\mm)$ and we set the \textit{perimeter}  $\Per(A):=|D\chi_A|(X)$, i.e.
\begin{equation*}\label{def:per}
\mathrm{Per}(A)=\inf\bigg\{\liminf_{n\rightarrow \infty}\int_X \mathrm{lip}(f_n)\,\di\mm: f_n\in \mathsf{Lip}_{loc}(X), f_n\rightarrow \chi_A \ \mathrm{in} \ L^1(X,\mm)\bigg\}
\end{equation*}
where $\chi_A:X\to\{0,1\}$ is the indicator function of the set $A\subset X$.

The \textit{Cheeger constant} of the metric measure space $(X,\mathsf{d},\mm)$ is defined as follows: \begin{equation*}\label{eq:defChConst}h(X):=\inf  \left\{\frac{\Per(A)}{\mm(A)}\, :\, A\subset X \text{ Borel with $0<\mm(A)\leq \mm(X)/2$} \right\}
\end{equation*}
One can use Theorem \ref{thm:tilde-fg-equal} to easily derive $\lambda_{2,1}(X)=h(X) $. 
\begin{lemma}[Coarea inequality and coarea formula, Proposition 2.6 in \cite{DePonti21b}]\label{prop:lemma}
Let $(X,\di)$ be a complete metric space and let $\mm$ be a non-negative Borel measure finite on bounded subsets. Let $f\in {\mathsf {Lip}}_{bs}(X)$, $f:X\to [0,\infty)$ and set $M=\sup_{X} f$. 
Then for $\mathcal{L}^{1}$-a.e. $t>0$ the set $\{f> t \}$ has finite perimeter and 
\begin{equation*}
\int_{0}^{M} \Per(\{f>t\}) \, \di t \leq \int_{X} |{\rm lip}(f)| \, \di \mm.
\end{equation*}

If in addition $(X,\di)$ is  separable, then the coarea formula for $\rm{BV}$ functions holds, i.e. for every $f:X\to [0,\infty)$ with  $f\in {\rm BV}(X, \di,\mm)$ it holds
\begin{equation*}
\int_{0}^{\infty} \Per(\{f>t\}) \, \di t= |Df|(X).
\end{equation*}
\end{lemma}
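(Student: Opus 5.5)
The plan is to prove the coarea inequality by a layer-cake argument built from the definition of perimeter via relaxation, and then to upgrade it to an equality for BV functions on separable spaces using lower semicontinuity of the total variation.

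\textbf{Step 1: a pointwise bound on the truncations.} Fix $f\in\mathsf{Lip}_{bs}(X)$ with $f\ge0$ and $M=\sup_X f$. For $t\in(0,M)$ and $h>0$, define the truncation
$$f_{t,h}:=\min\{1,\max\{0,(f-t)/h\}\}.$$
This function is Lipschitz with bounded support. It satisfies $\mathrm{lip}(f_{t,h})\le h^{-1}\mathrm{lip}(f)\,1_{\{t<f\le t+h\}}$ pointwise, after checking the boundary points of the level strip with the $\limsup$ definition. As $h\to0^+$, $f_{t,h}\to\chi_{\{f>t\}}$ in $L^1$ by dominated convergence, since $\mm$ is finite on bounded sets. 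Hence, by the definition of $\Per$,
$$\Per(\{f>t\})\le\liminf_{h\to0}\frac1h\int_{\{t<f\le t+h\}}\mathrm{lip}(f)\,\di\mm.$$

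\textbf{Step 2: integrate in $t$.} Integrate over $t\in(0,M)$ and apply Fatou's lemma to pull the $\liminf$ outside. Then use Fubini:
$$\int_0^M\frac1h\int_X 1_{\{t<f\le t+h\}}\mathrm{lip}(f)\,\di\mm\,\di t=\int_X\mathrm{lip}(f)\,\frac{|(f-h,f)\cap(0,M)|}{h}\,\di\mm\le\int_X\mathrm{lip}(f)\,\di\mm.$$
This gives the inequality. It also shows that $\Per(\{f>t\})<\infty$ for a.e.\ $t$, because the left-hand integral is finite. One caveat: measurability of $t\mapsto\Per(\{f>t\})$ needs care. I would either use lower integrals or note that $\Per$ is lower semicontinuous along the monotone family $\{f>t\}$, so that the map is Borel.

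\textbf{Step 3: the BV coarea formula, inequality ``$\le$''.} Take $f\in{\rm BV}$ with $f\ge0$ and a sequence $f_n\in\mathsf{Lip}_{loc}$ with $f_n\to f$ in $L^1$ and $\int_X\mathrm{lip}(f_n)\,\di\mm\to|Df|(X)$. We may replace $f_n$ by $\max\{f_n,0\}$. Step 1 extends to $\mathsf{Lip}_{loc}$ functions by a cutoff and exhaustion argument, using separability and finiteness of $\mm$.

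Since $\|f_n-f\|_{L^1}=\int_0^\infty\|\chi_{\{f_n>t\}}-\chi_{\{f>t\}}\|_{L^1}\,\di t$, passing to a subsequence gives $\chi_{\{f_n>t\}}\to\chi_{\{f>t\}}$ in $L^1$ for a.e.\ $t$. Lower semicontinuity of $\Per$ combined with Fatou's lemma then yields $\int_0^\infty\Per(\{f>t\})\,\di t\le|Df|(X)$.

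\textbf{Step 4: the reverse inequality ``$\ge$''.} This is the main obstacle. My first approach is to approximate $f$ by step functions $\sum_i h\,\chi_{\{f>ih\}}$, use subadditivity of $|D\cdot|(X)$ (which is immediate from the definition), and let $h\to0$ using $L^1$ convergence and lower semicontinuity. Concretely, for a.e.\ shift $s\in(0,h)$, set $g_{h,s}=\sum_i h\,\chi_{\{f>s+ih\}}$. Then
$$|Dg_{h,s}|(X)\le\sum_i h\,\Per(\{f>s+ih\}).$$
Averaging over $s$ turns the right-hand side into $\int_0^\infty\Per(\{f>t\})\,\di t$. Since $g_{h,s}\to f$ in $L^1$, lower semicontinuity gives $|Df|(X)\le\liminf_{h\to0}$ of the averaged bound. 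Finally, I need the averaging to be compatible with the $\liminf$; I would handle this by choosing a good shift $s$ for each $h$, which exists by the averaging (Chebyshev) principle.
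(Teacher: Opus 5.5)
The paper gives no proof of this lemma. It is quoted verbatim from Proposition~2.6 of \cite{DePonti21b} and invoked without proof for Theorem~\ref{thm:mono-mms}, so there is no in-paper argument to compare with. Your proposal is the standard relaxation proof of the metric coarea formula: truncation plus Tonelli for the Lipschitz inequality; layer cake, lower semicontinuity and Fatou for $\int_0^\infty\Per(\{f>t\})\,dt\le|Df|(X)$; discretization along level sets, subadditivity and lower semicontinuity for the converse. It is correct in substance, but three details need fixing.

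\textbf{Step 1.} The bound $\mathrm{lip}(f_{t,h})\le h^{-1}\mathrm{lip}(f)1_{\{t<f\le t+h\}}$ fails on $\{f=t\}$. There $\mathrm{lip}(f_{t,h})(x)=h^{-1}\limsup_{y\to x}(f(y)-f(x))^+/\di(x,y)$, which is typically positive. In a metric measure space $\mathrm{lip}(f)$ need not vanish $\mm$-a.e.\ on a level set, for instance if $\mm$ charges $\{f=t\}$. Use the indicator of $\{t\le f\le t+h\}$ instead. Since $\int_0^M 1_{\{t\le f(x)\le t+h\}}\,dt\le h$, Step 2 goes through unchanged.

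\textbf{Step 3.} No cutoff, exhaustion, separability or finiteness of $\mm$ is needed; note the lemma only assumes $\mm$ finite on bounded sets. The truncation $\min\{1,\max\{0,(f_n-t)/h\}\}$ is already in $\mathsf{Lip}_{loc}(X)$. Chebyshev gives $\mm(\{f_n>t\})\le\|f_n\|_{L^1}/t<\infty$, hence $L^1$ convergence to $\chi_{\{f_n>t\}}$, so Steps 1--2 apply verbatim on $(0,\infty)$. Your argument never actually uses separability, since it only involves $|D\cdot|(X)$.

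\textbf{Step 4.} The convergence $g_{h,s}\to f$ in $L^1$ is not uniform in $s\in(0,h)$ when $\mm(\{f>0\})=\infty$. You only have $A_h(s):=\|g_{h,s}-f\|_{L^1}\le\int_{\{0<f\le h\}}f\,d\mm+h\,\mm(\{f>s\})$, and the last term can blow up as $s\to0$. Its average over $s\in(0,h)$ is at most $\bar A_h:=\int_{\{0<f\le h\}}f\,d\mm+\int_X\min\{f,h\}\,d\mm$, which tends to $0$. Meanwhile $B_h(s):=\sum_i h\Per(\{f>s+ih\})$ averages exactly to $\bar B:=\int_0^\infty\Per(\{f>t\})\,dt$.

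Two separate Chebyshev selections of $s$ would cost a multiplicative constant. Instead, assuming $\bar B<\infty$ and $f\not\equiv0$ (otherwise there is nothing to prove), pick $s_h$ with $B_h(s_h)+A_h(s_h)/\sqrt{\bar A_h}\le\bar B+\sqrt{\bar A_h}$. This is possible because the left-hand side has average at most the right-hand side. Then $A_h(s_h)\to0$ and $B_h(s_h)\le\bar B+o(1)$, and lower semicontinuity gives $|Df|(X)\le\bar B$.

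Finally, the bound $|Dg_{h,s}|(X)\le B_h(s)$ uses countable subadditivity. This follows from the finite version plus lower semicontinuity, because the partial sums converge to $g_{h,s}$ in $L^1$.
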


By the above Coarea inequality in \cite{DePonti21b}, and our Theorem \ref{thm:critical-inequality}, and using a method similar to that in Section \ref{subsec:pLap-monotonicity}, we obtain the following monotonicity inequality.
\begin{theorem}
    \label{thm:mono-mms}
Let $(X,\di,\mm)$ be a complete metric measure space with $\mm(X)<\infty$. Then for any positive integer $k$, 
\begin{equation*}
p\big(\lambda_{k,p}(X)\big)^{\frac{1}{p}}\le q\big(\lambda_{k,q}(X)\big)^{\frac{1}{q}}\,  \qquad\textrm{for every} \ \ 1\le p<q<\infty\, .
\end{equation*}

\end{theorem}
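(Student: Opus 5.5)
The plan is to reduce Theorem \ref{thm:mono-mms} to the special case $ps'=qt'$ with $p=q$ and $s=t=r$ of Theorem \ref{thm:critical-inequality}, namely the statement that $p\cdot c(\Phi_p,\llbracket\cdot\rrbracket_p)$ is increasing in $p$. We then pass to a limit in a localization parameter, as was done for Riemannian manifolds in Section \ref{subsec:pLap-monotonicity}.

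\textbf{Step 1: localized structure.} Fix $\delta>0$ and take $E=J=X$ with $\eta=\mm$. For $x\in X$ and a Borel set $A$, define
$$\varphi_x(A)=\frac{\Per(A;B_\delta(x))}{\mm(B_\delta(x))},$$
where $\Per(A;B_\delta(x))=|D\chi_A|(B_\delta(x))$ is the relative perimeter. By the coarea formula in Lemma \ref{prop:lemma}, applied to $f-\inf f$ and localized to the open ball, the Choquet extension is
$$\widehat\varphi_x(f)=\frac{|Df|(B_\delta(x))}{\mm(B_\delta(x))}.$$
The intrinsic norm $|f|_{\varphi_x}$ is dominated by $\sup_{B_\delta(x)}|f|$. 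We may simply replace it by this supremum: enlarging the denominator is harmless, because the proof of Theorem \ref{thm:critical-inequality} only uses that the denominator is an $L^q$-integral in $e$ of a sup-type seminorm controlling the level sets.

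We then set $\Phi_{p,\delta}$ and $\llbracket\cdot\rrbracket_{p,\delta}$ as in the Riemannian example. With $\mathcal{Y}=\mathcal{Y}_k$ restricted to $\mathsf{Lip}_{bs}(X)$, which is still invariant under Mazur maps since $f\mapsto |f|^{r-1}f$ preserves bounded-support local Lipschitzness, Theorem \ref{thm:critical-inequality} gives
$$p\,c_k(\Phi_{p,\delta},\llbracket\cdot\rrbracket_{p,\delta})\le q\,c_k(\Phi_{q,\delta},\llbracket\cdot\rrbracket_{q,\delta}).$$
Finiteness of $\mm$ ensures the integrals make sense.

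\textbf{Step 2: limit $\delta\to0^+$.} We need
$$\lim_{\delta\to0}c_k(\Phi_{p,\delta},\llbracket\cdot\rrbracket_{p,\delta})=\lambda_{k,p}(X)^{1/p}.$$
For the denominator, $\sup_{B_\delta(x)}|f|\to|f(x)|$ for continuous $f$, and dominated convergence (bounded support, finite $\mm$) gives $\llbracket f\rrbracket_{p,\delta}\to\|f\|_p$. For the numerator, $|Df|\le \mathrm{lip}(f)\,\mm$ for Lipschitz $f$, which is the coarea inequality of Lemma \ref{prop:lemma}. Hence $\widehat\varphi_x(f)$ is at most the ball-average of $\mathrm{lip}(f)$, and by Lebesgue differentiation (or maximal-function bounds) we get $\limsup_\delta\Phi_{p,\delta}(f)\le\|\mathrm{lip} f\|_p$.

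This already yields one direction for each fixed sweep-out $S$. The reverse direction needs care: the minimax value defined through $|Df|$ should coincide with $\lambda_{k,p}$. The standard relaxation fact that Cheeger-energy minimax values computed with $\mathrm{lip}$ agree with their relaxed versions (density of Lipschitz functions in energy, as in \cite{DePonti21b}) gives this.

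\textbf{Main obstacle.} The interchange of the limit $\delta\to0$ with $\inf_{S}\sup_{f\in S}$ is the hardest step. Two ingredients are needed. For the upper bound we need uniform convergence over a near-optimal compact symmetric set $S$ (finite-dimensional, obtained by density), for which Dini/equicontinuity arguments apply. For the lower bound we need $\liminf_\delta\Phi_{p,\delta}(f)\ge (\int \mathrm{lip}(f)^p)^{1/p}$ up to relaxation, which I would obtain from lower semicontinuity of the total variation plus the identity $|Df|=|\nabla f|_w\,\mm$ in the relaxed sense.

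If this two-sided convergence proves delicate, there is a more robust alternative. Apply the abstract monotonicity argument directly, that is, the Mazur-map/Hölder computation from the proof of Theorem \ref{thm:critical-inequality}, to the functionals $f\mapsto\|\mathrm{lip} f\|_p$ and $\|f\|_p$. This uses only the chain rule $\mathrm{lip}(|f|^{r-1}f)=r|f|^{r-1}\mathrm{lip}(f)$ and Hölder's inequality, which also yields $p\lambda_{k,p}^{1/p}\le q\lambda_{k,q}^{1/q}$.
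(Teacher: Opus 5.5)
Your main route (relative perimeters $\varphi_x(A)=\Per(A;B_\delta(x))/\mm(B_\delta(x))$, Theorem \ref{thm:critical-inequality} at fixed $\delta$, then $\delta\to0^+$) is the route the paper itself indicates. Step 1 is sound: $\sup_{B_\delta(x)}|f|$ dominates $|f|_{\varphi_x}$ and commutes with Mazur maps, and only orders $t\ge1$ are used, so working inside $\mathsf{Lip}_{bs}(X)$ is harmless.

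Step 2, however, is not a proof in the stated generality. Write $c_k(\delta,p)$ for the localized constants. From $p\,c_k(\delta,p)\le q\,c_k(\delta,q)$ you need both $\limsup_{\delta} c_k(\delta,q)\le\lambda_{k,q}^{1/q}$ and $\liminf_{\delta} c_k(\delta,p)\ge\lambda_{k,p}^{1/p}$.
\begin{itemize}
\item The first bound needs ball averages of $\mathrm{lip}(f)$ to converge in $L^q$. That is a differentiation theorem, and it is not available on a general complete m.m.s.\ with $\mm(X)<\infty$, since no doubling is assumed. It also needs uniformity over a near-optimal $S$, which need not be compact.
\item The second bound is worse, because every inequality you have points the wrong way. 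Both $|Df|\le\mathrm{lip}(f)\,\mm$ and $\sup_{B_\delta(x)}|f|\ge|f(x)|$ lower the quotient. (The first is just the definition of $|Df|$ with the constant sequence, not the coarea inequality.)
\item Even where the limit exists, the numerator only sees the density of $|Df|$, which is a $1$-relaxed gradient. For $p>1$, the relaxation of $\int\mathrm{lip}(f)^p\,d\mm$ is the $p$-Cheeger energy, and minimal $1$- and $p$-weak upper gradients need not agree on a general m.m.s.
\item The near-optimal sets for the $\delta$-problems carry no uniform Lipschitz control.
\end{itemize}
So the main route, as written, does not close.

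Your fallback, on the other hand, is a complete and correct proof, and you should lead with it.
\begin{itemize}
\item Set $r=q/p>1$. For $f\in\mathsf{Lip}_{bs}(X)$ one has $f^r=|f|^{r-1}f\in\mathsf{Lip}_{bs}(X)$.
\item Because $s\mapsto|s|^{r-1}s$ is $C^1$ and $f$ is continuous, $\mathrm{lip}(f^r)=r|f|^{r-1}\mathrm{lip}(f)$ pointwise, including at zeros of $f$ and at isolated points.
\item H\"older with exponents $r',r$ and $(r-1)pr'=q$ gives $\int\mathrm{lip}(f^r)^p\,d\mm\le r^p\big(\int|f|^q\,d\mm\big)^{1/r'}\big(\int\mathrm{lip}(f)^q\,d\mm\big)^{1/r}$.
\item Since $\int|f^r|^p\,d\mm=\int|f|^q\,d\mm$, the Rayleigh quotients satisfy $R_p(f^r)^{1/p}\le r\,R_q(f)^{1/q}$.
\item The Mazur map is odd and continuous (uniformly, or as a map $L^q\to L^p$). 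Hence $S^r$ is symmetric with $\mathrm{genus}(S^r)\ge\mathrm{genus}(S)\ge k$.
\item Taking $\inf_S\sup_{f\in S}$ yields $p\lambda_{k,p}^{1/p}\le q\lambda_{k,q}^{1/q}$.
\end{itemize}

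Compared with the paper, this is Step 1.1 of the proof of Theorem \ref{thm:critical-inequality} applied directly to the limiting functionals $\|\mathrm{lip}(\cdot)\|_p$ and $\|\cdot\|_p$. It dispenses with Choquet extensions, the coarea formula, localization and the limit--minimax interchange, and it needs no doubling or Poincar\'e hypothesis. The paper's route buys only the conceptual placement of the result inside the Choquet framework, plus, as a by-product, the inequality for the nonlocal $\delta$-constants. For the theorem as stated, the direct argument is the one that actually closes.
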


\section{Proofs and Supplementary}\label{sec:proof}

\begin{prop}\label{pro:setpair-generalize-original}
Suppose that $\psi$ is defined as  $\psi(A,A')=\varphi(A)+\varphi(J\setminus A')-\varphi(J)$ for any disjoint measurable subsets $A$ and $A'$, and assume that $\varphi\{f\ge t\}=\varphi\{f> t\}$ for a.e. $t\in\R$, then $\widehat{\psi}(f)=\widehat{\varphi}(f)$. 
\end{prop}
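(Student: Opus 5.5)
Fix $f\in\Bd$ and write $m=\inf(f)$. The plan is to expand $\widehat\psi(f)$ using the definition of $\psi$ and then match the result to $\widehat\varphi(f)$ term by term. By \eqref{eq:choquet-pair},
\[
\widehat{\psi}(f)=\int_0^\infty \Big(\varphi\{f\ge t\}+\varphi\big(J\setminus\{f\le -t\}\big)-\varphi(J)\Big)dt
=\int_0^\infty \Big(\varphi\{f\ge t\}+\varphi\{f> -t\}-\varphi(J)\Big)dt .
\]
The pair $(\{f\ge t\},\{f\le -t\})$ is disjoint for $t>0$, so $\psi$ is evaluated on $\B_2$ as required. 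Using the hypothesis $\varphi\{f>s\}=\varphi\{f\ge s\}$ for a.e.\ $s$, we may replace $\varphi\{f>-t\}$ by $\varphi\{f\ge -t\}$ for a.e.\ $t>0$. Substituting $s=-t$ in the second piece then gives
\[
\widehat{\psi}(f)=\int_0^\infty \varphi\{f\ge s\}\,ds+\int_{-\infty}^0\big(\varphi\{f\ge s\}-\varphi(J)\big)\,ds .
\]

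Next we show this equals \eqref{eq:choquet}, splitting into cases according to the sign of $m$.

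\emph{Case $m\le 0$.} For $s<m$ we have $\{f\ge s\}=J$, so the integrand of the second integral vanishes on $(-\infty,m)$. The second integral therefore reduces to $\int_m^0\varphi\{f\ge s\}ds+m\varphi(J)$. Adding the first integral yields exactly $\int_m^\infty \varphi\{f\ge s\}ds+\varphi(J)m=\widehat\varphi(f)$.

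\emph{Case $m>0$.} For every $s\le0$ we have $\{f\ge s\}=J$, so the second integral is $0$. On $(0,m)$ we also have $\varphi\{f\ge s\}=\varphi(J)$, so $\int_0^\infty\varphi\{f\ge s\}ds=m\varphi(J)+\int_m^\infty\varphi\{f\ge s\}ds$. This is again $\widehat\varphi(f)$.

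All integrals are over bounded ranges, because $f$ is bounded and the integrands vanish for $s>\sup f$, provided $\varphi(\emptyset)=0$. If $\varphi(\emptyset)\neq0$, the formula \eqref{eq:choquet} would not converge either, so this is implicitly assumed. Integrability is then inherited from $\varphi\in\bm{\varPsi}(\B)$, and this justifies splitting the integral into the two pieces.

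The main obstacle is the bookkeeping step where $\varphi(J\setminus\{f\le -t\})=\varphi\{f>-t\}$ is converted to $\varphi\{f\ge -t\}$. This is exactly where the hypothesis $\varphi\{f\ge t\}=\varphi\{f>t\}$ for a.e.\ $t$ is needed. Since the two set functions $t\mapsto\varphi\{f\ge t\}$ and $t\mapsto\varphi\{f>t\}$ then agree off a Lebesgue null set, the integrals agree. In the finite case this hypothesis holds automatically, because $f$ takes only finitely many values.
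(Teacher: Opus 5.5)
Your proof is correct and follows the paper's argument. You expand $\psi$ on the level-set pair, use the a.e.\ hypothesis to replace $\varphi\{f>-t\}$ by $\varphi\{f\ge -t\}$, substitute $s=-t$, and collapse using $\{f\ge s\}=J$ for $s\le\inf f$ and $\varphi(\emptyset)=0$ above $\sup f$. The only difference is cosmetic: the paper truncates both integrals at $\|f\|_\infty=\max\{-\inf f,\sup f\}$, which handles both signs of $\inf f$ in one chain of equalities instead of your case split.
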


\begin{proof}
The proof is quite direct. Note that
\begin{align*}
\widehat{\psi}(f)&=\int_0^{\|f\|_\infty} \psi(f\ge t,f\le-t)dt 
\\&=\int_0^{\|f\|_\infty}\big(\varphi\{f\ge t\}+\varphi\{f>- t\}-\varphi(J)\big)dt\\
&=\int_{-\|f\|_\infty}^{\|f\|_\infty}\varphi\{f\ge t\}dt- \|f\|_\infty \varphi(J)
\\&=\int_{\inf(f)}^{\sup(f)}\varphi\{f\ge t\}dt +\inf(f)\varphi(J)=\widehat{\varphi}(f) 
\end{align*}
where we used $\|f\|_\infty=\max\{-\inf(f),\sup(f)\}$ and $\varphi(\varnothing)=0$.    
\end{proof}

Below, we give some additional remarks on some results: 
\begin{itemize}
\item 
For Corollary \ref{cor:WJ}, we should point out that since $\mu(J)<\infty$, an $L^2$-graphon must be also an $L^p$-graphon for any $1\le p\le 2$. 
The proof of Theorems \ref{thm:critical-inequality2} and \ref{thm:Cheeger-graphon} for the standard bounded graphons is still available for that of $L^2$-graphons. And thus, Corollary \ref{cor:WJ} follows directly from Theorem \ref{thm:Cheeger-graphon}. 
\item 
For \eqref{eq:c-monotone1} in Theorem \ref{thm:critical-inequality}, we note that $ps'_1=qt'_1$ and $ps_1s_2'=qt_1t_2'$ imply $p(s_1s_2)'=q(t_1t_2)'$, where all the constants $p,q,s_1,s_2,t_1,t_2$ are in the interval $[1,\infty]$, and $a'$ denotes the H\"older conjugate of a given  $a\in[1,\infty]$. Thus, $c(\Phi_p,\llbracket \cdot\rrbracket_{q})\le t_1\cdot  c(\Phi_{ps_1},\llbracket \cdot\rrbracket_{qt_1})\le t_1t_2\cdot  c(\Phi_{ps_1s_2},\llbracket \cdot\rrbracket_{qt_1t_2})$, indicating that it is indeed a monotonicity inequality. 
By an alternative formulation, we have $(\min\{q^{-1},p^{-1}\},\infty]\ni z\mapsto zp\cdot c(\Phi_{p(zq)'},\llbracket \cdot\rrbracket_{q(zp)'})$ is nondecreasing.
\end{itemize}

\subsection{Proof of Theorem \ref{thm:bisubmodular}}

\begin{lemma}\label{lem:for-Thm-equ}
Let $\psi:\B_2\to\R$ be a bounded bisubmodular function with $\psi(\varnothing,\varnothing)=0$. Let $(A_1,A_1')$, $\cdots$, $(A_n,A_n')\in \B_2$, $a_1,\cdots,a_n\in \mathbb{N}$. Then
\begin{equation}
\label{eq:psi-extension-<sum}
\widehat{\psi}\left(\sum_{i=1}^n a_i({\bf1}_{A_i}-{\bf1}_{A_i'})\right)\le \sum_{i=1}^n a_i\psi(A_i,A_i').    
\end{equation}
If the setpairs $(A_i,A_i')$ 
form a chain, that is, $A_1\subset A_2\subset\cdots\subset A_n$ and $A_1'\subset A_2'\subset\cdots\subset A_n'$, then the inequality \eqref{eq:psi-extension-<sum} reduces to an equality for any setpair function $\psi$. 
\end{lemma}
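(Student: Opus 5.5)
I will first prove the chain case, and then reduce the general case to it by an uncrossing argument driven by bisubmodularity.

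\textbf{Chain case.} Suppose $A_1\subset\cdots\subset A_n$ and $A_1'\subset\cdots\subset A_n'$, with all pairs in $\B_2$. Put $f=\sum_i a_i(\mathbf 1_{A_i}-\mathbf 1_{A_i'})$. Since $A_i\cap A_i'=\varnothing$ and both sequences increase, every $A_j$ is disjoint from every $A_k'$. Indeed, for $j\le k$ we have $A_j\subset A_k$, and $A_k\cap A_k'=\varnothing$. For $j>k$ we have $A_k'\subset A_j'$, and $A_j\cap A_j'=\varnothing$.

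Consequently, on $A_n$ the function $f$ is nonnegative and $f(x)=\sum_{i:\,x\in A_i}a_i$. On $A_n'$ it is nonpositive and $f(x)=-\sum_{i:\,x\in A_i'}a_i$. Elsewhere $f=0$.

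Now set $S_k=a_k+\cdots+a_n$, and for $t\in(S_{k+1},S_k]$ with $a_k>0$ we get
\[
\{f\ge t\}=A_k,\qquad \{f\le -t\}=A_k'.
\]
This holds because a point of $A_n$ lies in $A_k$ exactly when $f(x)\ge S_k$, and similarly on the negative side. Integrating $\psi(f\ge t,f\le -t)$ over $t\in(0,\infty)$ therefore gives $\sum_k a_k\psi(A_k,A_k')$. Intervals with $a_k=0$ are empty and contribute nothing. This step uses no property of $\psi$ beyond $\psi(\varnothing,\varnothing)=0$ for $t>S_1$.

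\textbf{General case.} I will run an uncrossing induction on the family of pairs with multiplicities, viewed as a multiset of $N=\sum a_i$ pairs, each with coefficient $1$. Replacing two pairs $P,Q$ by $P\vee Q$ and $P\wedge Q$ does not increase $\sum\psi$, by \eqref{eq:bisub2}. The key pointwise identity is
\[
(\mathbf 1_{A}-\mathbf 1_{A'})+(\mathbf 1_{B}-\mathbf 1_{B'})=\big(\mathbf 1_{C}-\mathbf 1_{C'}\big)+\big(\mathbf 1_{A\cap B}-\mathbf 1_{A'\cap B'}\big)+g,
\]
where $(C,C')=(A,A')\vee(B,B')$ and $g$ collects the cancelling points of $A\cap B'$ and $A'\cap B$.

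The sum is therefore not preserved exactly, and I would handle this as follows.

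1. \emph{Reduce to the level-set form.} Rather than tracking sums, I would directly show that the multiset of pairs can be uncrossed into the chain of level-set pairs of $f$, namely $\big(\{f\ge k\},\{f\le -k\}\big)$ for $k=1,\dots,\max|f|$, plus copies of $(\varnothing,\varnothing)$. Each bisubmodular exchange must keep the ``signed count'' $\sum(\mathbf 1_{A}-\mathbf 1_{A'})$ unchanged.

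2. \emph{Check exactness of the exchange.} At a point $x\in A\cap B'$, the left side counts $1-1=0$. On the right, $x$ lies in $A\cup B$ and in $A'\cup B'$, so it is removed from both $C$ and $C'$. It is also not in $A\cap B$ or in $A'\cap B'$. Hence the right side also counts $0$, and in fact $g\equiv 0$, so the signed sum is preserved exactly.

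3. \emph{Run the induction.} Use a standard potential, such as $\sum$ of squared sizes via finitely many atoms generated by the $A_i,A_i'$. Since only finitely many sets are involved, work on the finite atom partition. The process terminates with a chain, and by the chain case its $\psi$-sum equals $\widehat\psi(f)$. Every exchange only decreased the sum, which yields \eqref{eq:psi-extension-<sum}.

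The main obstacle is the termination and potential argument in step 3, since in the pair setting $\vee$ can shrink sets. I would use the potential $\sum_{\text{pairs}}(|A|+|A'|)^2$ over atoms, adapted from Lov\'asz's submodular uncrossing. Alternatively, I would induct on $N$: peel off the top level pair $(\{f\ge M\},\{f\le -M\})$ by repeatedly applying $\vee$ and $\wedge$ with it, which is the approach I would try first.
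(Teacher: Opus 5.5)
Your overall route is the paper's: restrict to the finite algebra of atoms, compute the chain case directly, and uncross incomparable pairs with $\vee,\wedge$. Your chain computation is correct. So is your check that the exchange preserves $\sum(\mathbf{1}_A-\mathbf{1}_{A'})$ exactly, i.e.\ $g\equiv 0$; the paper just asserts this as clear.

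The gap is the termination step. The potential $\sum(|A|+|A'|)^2$ is not monotone in either direction. Write $x=|A|+|A'|$ and $y=|B|+|B'|$, and let $u,v$ be the sizes of $(A,A')\vee(B,B')$ and $(A,A')\wedge(B,B')$. Then $u+v=x+y-2\bigl(|A\cap B'|+|A'\cap B|\bigr)$, and for incomparable pairs $v<\min(x,y)$.
\begin{itemize}
\item When nothing cancels, $u+v=x+y$ and the sum of squares strictly increases.
\item When something cancels, it can drop. For example, $(\{z\},\varnothing)$ and $(\varnothing,\{z\})$ are incomparable, both become $(\varnothing,\varnothing)$, and $x^2+y^2=2$ goes to $0$.
\end{itemize}
This is exactly the phenomenon you flagged: $\vee$ can shrink sets. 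Lov\'asz's squared potential relies on $|A\cup B|+|A\cap B|=|A|+|B|$, which fails here.

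The repair is the paper's potential $\sum_{(A,A')\in\mathcal{A}}\sqrt{|A|+|A'|}$, which is increasing and strictly concave and so handles both cases at once. Assume $x\le y$. Then $\sqrt u+\sqrt v\le\sqrt{x+y-v}+\sqrt v<\sqrt x+\sqrt y$. The last step holds because $v\mapsto\sqrt{x+y-v}+\sqrt v$ is strictly increasing on $[0,(x+y)/2]$ and $v<x\le (x+y)/2$. A lexicographic potential also works: first the total size $\sum(|A|+|A'|)$, then $-\sum(|A|+|A'|)^2$. Either gives termination at a chain, and your chain case then finishes the proof.

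Your fallback of peeling off the top level pair is only sketched. It would need its own termination argument, since $\vee$ is not even associative on pairs: $\bigl((\{z\},\varnothing)\vee(\{z\},\varnothing)\bigr)\vee(\varnothing,\{z\})=(\varnothing,\varnothing)$, while $(\{z\},\varnothing)\vee\bigl((\{z\},\varnothing)\vee(\varnothing,\{z\})\bigr)=(\{z\},\varnothing)$.
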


\begin{proof}Following the proof of Theorem 3.1 in \cite{Lovasz25}, we use the basic combinatorial technique
called “uncrossing”. 
We may assume that the ground set $J$ is finite, since we may merge the atoms
of the set-algebra generated by $\{(A_i,A_i')\}_{i=1}^n$ to single points. The assertion about
equality is easy to be verified by observing 
\[\int_0^{
\infty}\psi\Big(\sum_{i=1}^n a_i{\bf1}_{A_i}\ge t,-\sum_{i=1}^n a_i{\bf1}_{A_i'}\le- t\Big)dt=\sum\limits_{i=1}^{n-1}\int^{\sum\limits_{j=i}^na_j}_{\sum\limits_{j=i+1}^na_j}\psi(A_i,A_i')dt+\int_0^{a_n}\psi(A_n,A_n')dt.\]
For the
general case, let $\A$ be the multiset consisting of $a_i$ copies of $(A_i,A_i')$, and let $|\A|=\sum_ia_i$. Then $\sum_{i=1}^n a_i({\bf1}_{A_i}-{\bf1}_{A_i'})=\sum_{(A,A')\in \A}({\bf1}_A-{\bf1}_{A'})$ and we want to prove that $$\widehat{\psi}\left(\sum_{(A,A')\in \A}({\bf1}_A-{\bf1}_{A'})\right)\le \sum_{(A,A')\in \A} \psi(A,A').$$
Suppose that we find two setpairs $(A_1,A_1'),(A_2,A_2')\in\A$ such that neither one of them
contains the other. Replace one copy of $(A_1,A_1')$ and of $(A_2,A_2')$ by $(A_1,A_1')\vee (A_2,A_2')$ and $(A_1,A_1')\wedge (A_2,A_2')$, and let $\A'$ be the resulting multiset. Then clearly, $\sum_{(A,A')\in \A'}({\bf1}_A-{\bf1}_{A'})=\sum_{(A,A')\in \A}({\bf1}_A-{\bf1}_{A'})$, and 
$$\sum_{(A,A')\in \A'} \psi(A,A')\le \sum_{(A,A')\in \A} \psi(A,A') $$
by bisubmodularity. Let us repeat this transformation as long as we can. Since we stay with subsets of a finite set and $|\A|$ does not change, but the quantity $\sum_{(A,A')\in\A}\sqrt{|A|+|A'|}$ strictly decreases at each step, the procedure must stop after a finite
number of iterations with a multiset that is a chain. As remarked above, in this case equality holds, which proves the inequality in the lemma. 
\end{proof}

\textbf{Proof of Theorem \ref{thm:bisubmodular}}. 
Since $\widehat{\psi}({\bf1}_{B}-{\bf1}_{B'})=\psi(B,B')$, $\forall (B,B')\in \B_2$, there is no difficulty to show that the convexity of $\widehat{\psi}$ implies the bisubmodularity of $\psi$. 

Note that Lemma \ref{lem:for-Thm-equ}  indeed shows a special case Theorem \ref{thm:bisubmodular}. 

For general $f$ and $g$, 
if they are integer-valued stepfunctions, then we express them by their layer cake representation with all the coefficients $a_i$ and $b_j$ nonnegative, and apply the above Lemma to get the inequality $\widehat{\psi}(f+g)\le \widehat{\psi}(f)+\widehat{\psi}(g)$. For rational-valued stepfunctions, the inequality follows by scaling. The general case follows via approximation by stepfunctions and the Lipschitz continuity of $\widehat{\psi}$.

Finally, by homogeneity and the additivity, we derive the convexity of $\widehat{\psi}$.

\subsection{Proof of Theorem \ref{thm:tilde-fg-equal}}
\label{sec:proof-main-equal}

Case 1: Suppose that $\varphi_1(J)=\varphi_2(J)=0$, $\widetilde{\varphi}_1$ is a super-Choquet extension  of $\varphi_1$, and $\widetilde{\varphi}_2$ is a sub-Choquet extension  of $\varphi_2$.

In this case,  we have $\widetilde{\varphi}_1(1_A)=\varphi_1(A)$ and  $\widetilde{\varphi}_2(1_A)=\varphi_2(A)$ for any $A\in\B$, as well as $\widetilde{\varphi}_2(f)\le \widehat{\varphi}_2(f)$ and $\widetilde{\varphi}_1(f)\ge \widehat{\varphi}_1(f)$ for any $f\in\Bd$. 
This implies 
\begin{align*}
\inf_{f\in \Bd \text{ with }\widetilde{\varphi}_2(f)>0}\frac{\widetilde{\varphi}_1(f)}{\widetilde{\varphi}_2(f)}\le \inf_{{\bf1}_A\in \Bd \text{ with }\widetilde{\varphi}_2({\bf1}_A)>0}\frac{\widetilde{\varphi}_1({\bf1}_A)}{\widetilde{\varphi}_2({\bf1}_A)} =\inf_{A\in\B\text{ with }\varphi_2(A)>0}\frac{\varphi_1(A)}{\varphi_2(A)} .
\end{align*}
For the other direction, given  $f\in \Bd $ with $\widetilde{\varphi}_2(f)>0$, let $S_+(\varphi_2,f):=\{t\ge \inf(f): \varphi_2\{f\ge t\}>0\}$. Clearly, $\widehat{\varphi}_2(f)>0$ and the Lebesgue measure of $S_+(\varphi_2,f)$ is positive. 

Denote by $$C:=\inf\limits_{t\in S_+(\varphi_2,f)} \frac{\varphi_1\{f\ge t\}}{\varphi_2\{f\ge t\}}.$$

Then, \begin{equation}\label{eq:t0}\varphi_1\{f\ge t\}\ge C\varphi_2\{f\ge t\}   \end{equation} for any $t\in S_+(\varphi_2,f)$. Note that if $t\in [\inf(f),\infty)\setminus  S_+(\varphi_2,f)$, then $\varphi_2\{f\ge t\}=0$, which implies that \eqref{eq:t0} still holds. 
Therefore, with the condition 
of Case 1, we have 
\begin{align*}
\widetilde{\varphi}_1(f)&\ge\widehat{\varphi}_1(f)=\int_{\inf f}^{\sup f} \varphi_1\{f\ge t\}dt+\varphi_1(J)\inf f=\int_{\inf f}^{\sup f} \varphi_1\{f\ge t\}dt
\\&\ge C\int_{\inf f}^{\sup f} \varphi_2\{f\ge t\}dt=C\int_{\inf f}^{\sup f} \varphi_2\{f\ge t\}dt+C\varphi_2(J)\inf f
=C \widehat{\varphi}_2(f)\ge C \widetilde{\varphi}_2(f).
\end{align*}
In consequence, 
$$ \frac{\widetilde{\varphi}_1(f)}{\widetilde{\varphi}_2(f)}\ge C=\inf\limits_{t\in S_+(\varphi_2,f)} \frac{\varphi_1\{f\ge t\}}{\varphi_2\{f\ge t\}}\ge \inf_{A\in\B\text{ with }\varphi_2(A)>0}\frac{\varphi_1(A)}{\varphi_2(A)}.$$
We then obtain 
$$\inf_{f\in \Bd \text{ with }\widetilde{\varphi}_2(f)>0}\frac{\widetilde{\varphi}_1(f)}{\widetilde{\varphi}_2(f)}=\inf_{A\in\B\text{ with }\varphi_2(A)>0}\frac{\varphi_1(A)}{\varphi_2(A)} .   
$$
\vspace{0.1cm}

\noindent Case 2: Suppose that  $\widetilde{\psi}_1$ is a super-Choquet extension  of $\psi_1$, and $\widetilde{\psi}_2$ is a sub-Choquet extension  of $\psi_2$.

In this case, we have $\widetilde{\psi}_1(1_A-1_{A'})=\psi_1(A,A')$ and $\widetilde{\psi}_2(1_A-1_{A'})=\psi_2(A,A')$ for any $(A,A')\in\B_2$, as well as  $\widetilde{\psi}_2(f)\le \widehat{\psi}_2(f)$ and  $\widetilde{\psi}_1(f)\ge \widehat{\psi}_1(f)$ for any $f\in\Bd$. 
This implies the easy direction: $$\inf_{f\in \Bd \text{ with }\widetilde{\psi}_2(f)>0}\frac{\widetilde{\psi}_1(f)}{\widetilde{\psi}_2(f)}\le\inf_{(A,A')\in \B_2\text{ and }\psi_2(A,A')>0}\frac{\psi_1(A,A')}{\psi_2(A,A')}.$$

We shall also focus on the hard direction, for any $f\in \Bd $ with $\widetilde{\psi}_2(f)>0$, let $S_+(\psi_2,f):=\{t>0: \psi_2(f\ge t,f\le-t)>0\}$. Clearly, $\widehat{\psi}_2(f)>0$, and the Lebesgue measure of $S_+(\psi_2,f)$ is positive. Let $$C'=\inf_{(A,A')\in \B_2\text{ and }\psi_2(A,A')>0}\frac{\psi_1(A,A')}{\psi_2(A,A')}$$
Then for any $f\in \Bd $, for any $t\in S_+(\psi_2,f)$, 
\begin{equation}\label{eq:t1}
\psi_1(f\ge t,f\le-t)\ge C' \psi_2(f\ge t,f\le-t).    
\end{equation}
It is clear that for any $t\in\R$, \eqref{eq:t1} still holds. Therefore, 
\begin{align*}
\widetilde{\psi}_1(f)\ge \widehat{\psi}_1(f)&=\int_{0}^{\|f\|_\infty} \psi_1(f\ge t,f\le-t)dt
\\&\ge C'\int_{0}^{\|f\|_\infty} \psi_2(f\ge t,f\le-t)dt
\\&=C' \widehat{\psi}_2(f)\ge C' \widetilde{\psi}_2(f)
\end{align*} 
If $\widetilde{\psi}_2(f)>0$, we immediately obtain 
$$ \frac{\widehat{\psi}_1(f)}{\widehat{\psi}_2(f)}\ge C'$$
which completes the proof in a similar manner as Case 1.

\vspace{0.2cm}

\noindent Case 3:  Suppose that $\varphi_1(J)=\varphi_2(J)=0$, $\varphi_2$ is submodular,   $\widetilde{\varphi}_1$ is a super-Choquet extension  of $\varphi_1$, and $\widetilde{\varphi}_2$ is a one-homogeneous convex extension of $\varphi_2$.

In this case, $\widetilde{\varphi}_2$ is a l.s.c. convex one-homogeneous even function, and  $\widetilde{\varphi}_2(1_A)=\varphi_2(A)$, $\forall A\in\B$. 
Then there exists a convex set $S\ni0$ such that  $\widetilde{\varphi}_2(f)=\sup_{g\in S}\int_J g(x)f(x)dx$.  
Note that $\widetilde{\varphi}_2(-1)=\widetilde{\varphi}_2(1)=\varphi_2(J)=0$ which means that $\sup_{g\in S}\int_J g(x)dx=0=\sup_{g\in S}\int_J (-g(x))dx=-\inf_{g\in S}\int_J g(x)dx$. Thus, $\int_J g(x)dx=0$ for any $g\in S$. 

By Layer Cake representation, we have 
\begin{equation}
f(x) = \int_{\inf (f)}^{\sup(f)} \mathbf{1}_{\{y: f(y) > t\}}(x) \, dt+\inf(f).
\label{eq:LCR-pointwise}
\end{equation}
Then, for any $f\in\Bd$, 
\begin{align*}
\widetilde{\varphi}_2(f)&=\sup_{g\in S}\int_J g(x)f(x)dx  =\sup_{g\in S}\int_J g(x)\Big(\int_{\inf (f)}^{\sup(f)} \mathbf{1}_{\{y: f(y) > t\}}(x) \, dt+\inf(f)\Big)dx 
\\&=\sup_{g\in S}\Big(\int_{\inf (f)}^{\sup(f)} \int_J g(x)\mathbf{1}_{\{y: f(y) > t\}}(x)dx  dt\Big) 
\\&\le \int_{\inf (f)}^{\sup(f)}\sup_{g\in S} \int_J g(x)\mathbf{1}_{\{y: f(y) > t\}}(x)dx
\\&=\int_{\inf (f)}^{\sup(f)} \widetilde{\varphi}_2(\mathbf{1}_{\{y: f(y) > t\}}) dt=\int_{\inf (f)}^{\sup(f)} \varphi_2\{f \ge t\} dt= \widehat{\varphi}_2(f)
\end{align*}
where we used $\int_J g(x) dx=0$, $\forall g\in S$.

Then, we indeed prove that  $\widetilde{\varphi}_2\le \widehat{\varphi}_2$ holds. 
Thus, 
$\widetilde{\varphi}_2$ is actually a sub-Choquet extension of $\varphi_2$, and the result follows from Case 1. 

\vspace{0.2cm}

\noindent Case 4: Suppose that  $\widetilde{\psi}_1$ is a super-Choquet extension  of $\psi_1$, $\psi_2$ is bisubmodular, and $\widetilde{\psi}_2$ is a one-homogeneous convex extension of $\psi_2$. 

In this case, we shall plug the following Layer Cake representation
\begin{equation}
f(x) = \int_{0}^{\|f\|_\infty} \big(\mathbf{1}_{\{y: f(y) \ge t\}}(x)  - \mathbf{1}_{\{y: f(y) \le- t\}}(x)\big)  dt.
\label{eq:LCR-pointwise-pair}
\end{equation}
into the expression of $\widetilde{\psi}_2(f)$, and then we obtain
\begin{align*}
\widetilde{\psi}_2(f)&=\sup_{g\in S}\int_J g(x)f(x)dx  \\&=\sup_{g\in S}\int_J g(x)\Big(\int_{0}^{\|f\|_\infty} \big(\mathbf{1}_{\{y: f(y) \ge t\}}(x)  - \mathbf{1}_{\{y: f(y) \le- t\}}(x)\big)  dt\Big)dx 
\\&=\sup_{g\in S}\Big(\int_{0}^{\|f\|_\infty} \int_J g(x)\big(\mathbf{1}_{\{y: f(y) \ge t\}}(x)  - \mathbf{1}_{\{y: f(y) \le- t\}}(x)\big)dx  dt\Big) 
\\&\le \int_{0}^{\|f\|_\infty} \Big(\sup_{g\in S}\int_J g(x)\big(\mathbf{1}_{\{y: f(y) \ge t\}}(x)  - \mathbf{1}_{\{y: f(y) \le- t\}}(x)\big)dx \Big) dt 
\\&=\int_{0}^{\|f\|_\infty} \widetilde{\psi}_2(\mathbf{1}_{\{y: f(y) \ge t\}}-\mathbf{1}_{\{y: f(y) \le- t\}}) dt=\int_{0}^{\|f\|_\infty} \psi_2(f \ge t,f \le- t) dt= \widehat{\psi}_2(f)
\end{align*}
The desired conclusion then follows from Case 2.

Let $\Bd_+=\{f\in\Bd:f(x)\ge0,\forall x\in J\}$. Denote by $\bm{\varPsi}_+(\B)$ the collection of all $\varphi:\B\to\R$ such that for any  $f\in\Bd_+$, the function $t\mapsto \varphi\{f\ge t\}$ is Lebesgue integrable on the interval $[\inf (f),\infty)$. 
\begin{prop}
Given $\varphi_1,\varphi_2\in\bm{\varPsi}_+(\B)$ which are nonnegative, we have 
\begin{equation}\label{eq:Choquet-identity+}
\inf_{A\in\B\text{ with }\varphi_2(A)>0}\frac{\varphi_1(A)}{\varphi_2(A)}   =\inf_{f\in \Bd_+\text{ with }\widetilde{\varphi}_2(f)>0}\frac{\widetilde{\varphi}_1(f)}{\widetilde{\varphi}_2(f)}   
\end{equation}
where $\widetilde{\varphi}_1$ is a super-Choquet extension of $\varphi_1$, and $\widetilde{\varphi}_2$ is a sub-Choquet extension of $\varphi_2$, that is, $\widetilde{\varphi}_1(1_A)=\varphi_1(A)$ and $\widetilde{\varphi}_2(1_A)=\varphi_2(A)$ for any $A\in\B$,  $\widetilde{\varphi}_1(f)\ge \widehat{\varphi}_1(f)$ and $\widetilde{\varphi}_2(f)\le \widehat{\varphi}_2(f)$ for any $f\in\Bd_+$. 
\end{prop}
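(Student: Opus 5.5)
The plan is to mirror Case 1 of the proof of Theorem \ref{thm:tilde-fg-equal}, restricted to nonnegative functions; the restriction to $\Bd_+$ is exactly what removes the need for the hypothesis $\varphi_1(J)=\varphi_2(J)=0$.

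First, the easy inequality. Every indicator ${\bf1}_A$ lies in $\Bd_+$, and $\widetilde{\varphi}_i({\bf1}_A)=\varphi_i(A)$. Hence the infimum over $f\in\Bd_+$ with $\widetilde{\varphi}_2(f)>0$ is at most the infimum over indicators. That infimum is precisely the set-based quantity on the left of \eqref{eq:Choquet-identity+}.

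Second, the reverse inequality. Let $C:=\inf_{A\in\B,\ \varphi_2(A)>0}\varphi_1(A)/\varphi_2(A)$, which is finite and nonnegative if the index set is nonempty (if it is empty, both sides are $+\infty$ since $\widetilde{\varphi}_2\le\widehat{\varphi}_2=0$ forces the right side's index set to be empty too). Since $\varphi_1\ge0$, we have $\varphi_1(A)\ge C\varphi_2(A)$ for every $A\in\B$: if $\varphi_2(A)>0$ this is the definition of $C$, and if $\varphi_2(A)=0$ it is trivial. Now fix $f\in\Bd_+$ with $\widetilde{\varphi}_2(f)>0$. The key computation is to write the Choquet extension as $\widehat{\varphi}_i(f)=\int_0^{\infty}\varphi_i\{f\ge t\}\,dt$ for nonnegative $f$. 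This uses \eqref{eq:choquet}: for $t\in[0,\inf f]$ the level set $\{f\ge t\}$ equals $J$, so the term $\varphi_i(J)\inf(f)$ is absorbed into the integral, and no assumption on $\varphi_i(J)$ is needed. Integrating the pointwise inequality over $t$ gives
\[
\widetilde{\varphi}_1(f)\ge\widehat{\varphi}_1(f)\ge C\,\widehat{\varphi}_2(f)\ge C\,\widetilde{\varphi}_2(f).
\]
Dividing by $\widetilde{\varphi}_2(f)>0$ yields the ratio bound, and taking the infimum over such $f$ completes the proof.

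The only step requiring care, and the expected obstacle, is the rewriting of $\widehat{\varphi}_i$ on $\Bd_+$. One must check that the integrability assumption defining $\bm{\varPsi}_+(\B)$, namely integrability of $t\mapsto\varphi_i\{f\ge t\}$ on $[\inf f,\infty)$, together with boundedness of this function on $[0,\inf f]$ (where it is constant, equal to $\varphi_i(J)$), makes $\int_0^\infty$ well defined. It is also worth noting why nonnegativity of $f$ is what substitutes for $\varphi_i(J)=0$: for sign-changing $f$ the boundary term $\varphi_i(J)\inf(f)$ would be negative and could break the comparison, which is why Case 1 needed that hypothesis.
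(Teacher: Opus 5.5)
Your proof is correct and follows the paper's argument: the easy direction via indicators ${\bf1}_A\in\Bd_+$, and the reverse direction by integrating the levelwise inequality $\varphi_1\{f\ge t\}\ge C\,\varphi_2\{f\ge t\}$ to get $\widetilde{\varphi}_1(f)\ge\widehat{\varphi}_1(f)\ge C\,\widehat{\varphi}_2(f)\ge C\,\widetilde{\varphi}_2(f)$. The only cosmetic difference is in the boundary term: the paper keeps $\varphi_i(J)\inf f$ separate and uses $\varphi_1(J)\ge C\varphi_2(J)$ together with $\inf f\ge 0$, whereas you absorb it into $\int_0^{\inf f}\varphi_i\{f\ge t\}\,dt$, which is equivalent.
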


\begin{proof}
It is clear that 
\begin{align*}
 \inf_{f\in \Bd_+ \text{ with }\widehat{\varphi}_2(f)>0}\frac{\widehat{\varphi}_1(f)}{\widehat{\varphi}_2(f)}
\le \inf_{{\bf1}_A\in \Bd_+ \text{ with }\widehat{\varphi}_2({\bf1}_A)>0}\frac{\widehat{\varphi}_1({\bf1}_A)}{\widehat{\varphi}_2({\bf1}_A)} =\inf_{A\in\B\text{ with }\varphi_2(A)>0}\frac{\varphi_1(A)}{\varphi_2(A)}.
\end{align*}
For the other direction, we start from the inequality \eqref{eq:t0} and derive \begin{align*}
\widetilde{\varphi}_1(f)&\ge\widehat{\varphi}_1(f)=\int_{\inf f}^{\sup f} \varphi_1\{f\ge t\}dt+\varphi_1(J)\inf f
\\&\ge C\int_{\inf f}^{\sup f} \varphi_2\{f\ge t\}dt+C\varphi_2(J)\inf f
\\&=C \widehat{\varphi}_2(f)\ge C \widetilde{\varphi}_2(f)
\end{align*}
whenever  $\inf f\ge0$, i.e., $f\in\Bd_+$. Using a similar procedure used in the proof of Theorem \ref{thm:tilde-fg-equal}, 
we have 
$$\inf_{f\in \Bd_+ \text{ with }\widehat{\varphi}_2(f)>0}\frac{\widehat{\varphi}_1(f)}{\widehat{\varphi}_2(f)}=\inf_{A\in\B\text{ with }\varphi_2(A)>0}\frac{\varphi_1(A)}{\varphi_2(A)}.    
$$
Then, for any $f\in\Bd_+$, we have 
\begin{align*}
\widetilde{\varphi}_2(f)&=\sup_{g\in S}\int_J g(x)f(x)dx  =\sup_{g\in S}\int_J g(x)\Big(\int_{0}^{\sup(f)} \mathbf{1}_{\{y: f(y) > t\}}(x) \, dt\Big)dx 
\\&=\sup_{g\in S}\Big(\int_{0}^{\sup(f)} \int_J g(x)\mathbf{1}_{\{y: f(y) > t\}}(x)dx  dt\Big) 
\\&\le \int_{0}^{\sup(f)}\sup_{g\in S} \int_J g(x)\mathbf{1}_{\{y: f(y) > t\}}(x)dx
\\&=\int_{0}^{\sup(f)} \widetilde{\varphi}_2(\mathbf{1}_{\{y: f(y) \ge t\}}) dt=\int_{0}^{\sup(f)} \varphi_2\{f \ge t\} dt = \widehat{\varphi}_2(f)
\end{align*}
\end{proof}

We can also work on  $L^\infty(J) _+:=\{f\in L^\infty(J):f(x)\ge 0,\forall x\in J\text{ a.e.}\}$ instead of $\Bd_+$, and use $\essinf$ instead of $\inf$. 

Using a completely similar proof,  we have the following `sup'-analogue of Theorem \ref{thm:tilde-fg-equal}.
\begin{theorem}[A dual version of Theorem \ref{thm:tilde-fg-equal}] \label{th:dual-main}
Given $\varphi_1,\varphi_2\in\bm{\varPsi}(\B)$ which are nonnegative and satisfy $\varphi_1(J)=\varphi_2(J)=0$, we have 
\begin{equation}\label{eq:Choquet-identity/sup}
\sup_{A\in\B\text{ with }\varphi_2(A)>0}\frac{\varphi_1(A)}{\varphi_2(A)} =\sup_{f\in \Bd\text{ with }\widehat{\varphi}_2(f)>0}\frac{\widehat{\varphi}_1(f)}{\widehat{\varphi}_2(f)}   =\sup_{f\in \Bd\text{ with }\widetilde{\varphi}_2(f)>0}\frac{\widetilde{\varphi}_1(f)}{\widetilde{\varphi}_2(f)} .  
\end{equation}
where $\widetilde{\varphi}_1$ is a sub-Choquet extension  of $\varphi_1$ (or a one-homogeneous convex extension of $\varphi_1$ if $\varphi_1$ is submodular), and $\widetilde{\varphi}_2$ is a super-Choquet extension  of $\varphi_2$ . 

Let $\psi_1, \psi_2 \in \bm{\varPsi}(\B_2)$, and assume that they are both nonnegative. 
Then \begin{equation}\label{eq:Choquet-pair-identity/sup}
\sup_{(A,A')\in \B_2\text{ and }\psi_2(A,A')>0}\frac{\psi_1(A,A')}{\psi_2(A,A')}=\sup_{f\in \Bd\text{ with }\widehat{\psi}_2(f)>0}\frac{\widehat{\psi}_1(f)}{\widehat{\psi}_2(f)}=\sup_{f\in \Bd\text{ with }\widetilde{\psi}_2(f)>0}\frac{\widetilde{\psi}_1(f)}{\widetilde{\psi}_2(f)}   
\end{equation}
where $\widetilde{\psi}_1$ is a sub-Choquet extension  of $\psi_1$  (or a one-homogeneous convex extension of $\psi_1$ if $\psi_1$ is bisubmodular), and $\widetilde{\psi}_2$ is a super-Choquet extension of $\psi_2$.
\end{theorem}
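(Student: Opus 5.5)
I will mirror the four-case proof of Theorem \ref{thm:tilde-fg-equal}, reversing every inequality. The easy direction comes from restricting to indicator functions. Since $\widetilde{\varphi}_i(1_A)=\varphi_i(A)=\widehat{\varphi}_i(1_A)$, and similarly $\widetilde{\psi}_i(1_A-1_{A'})=\psi_i(A,A')$ for $(A,A')\in\B_2$, each functional supremum in \eqref{eq:Choquet-identity/sup} and \eqref{eq:Choquet-pair-identity/sup} is at least the corresponding set (or setpair) supremum.

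For the reverse inequality in the set case, let $D:=\sup_{A:\varphi_2(A)>0}\varphi_1(A)/\varphi_2(A)$, and assume $D<\infty$ (otherwise there is nothing to show). I first claim that $\varphi_1(A)\le D\varphi_2(A)$ for every $A\in\B$. This is a point not needed in the inf-version: when $\varphi_2(A)=0$ we need $\varphi_1(A)=0$. I would handle it by noting that, for this formulation to be meaningful, one either restricts the left-hand sup accordingly or treats sets with $\varphi_2(A)=0<\varphi_1(A)$ as giving value $+\infty$ on both sides. In the finite case, such an $A$ makes both sides infinite, since $\widehat{\varphi}_2(1_A+\epsilon g)$ can be made positive while $\widehat{\varphi}_1$ stays bounded below. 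I expect this convention issue to be the main obstacle, and I would state the reading explicitly rather than grind it out.

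Granting the claim, take $f\in\Bd$ with $\widetilde{\varphi}_2(f)>0$. Then
$$\widetilde{\varphi}_1(f)\le\widehat{\varphi}_1(f)=\int_{\inf f}^{\sup f}\varphi_1\{f\ge t\}dt\le D\int_{\inf f}^{\sup f}\varphi_2\{f\ge t\}dt=D\widehat{\varphi}_2(f)\le D\widetilde{\varphi}_2(f),$$
using $\varphi_i(J)=0$ exactly as in Case 1. Dividing by $\widetilde{\varphi}_2(f)>0$ gives the bound; taking $\widetilde{\varphi}_i=\widehat{\varphi}_i$ yields the middle equality. The setpair version is identical, with $\int_0^{\|f\|_\infty}\psi_i(f\ge t,f\le -t)dt$ in place of the level-set integral, as in Case 2.

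For the alternative hypotheses, where $\varphi_1$ is submodular and $\widetilde{\varphi}_1$ is a one-homogeneous convex extension (respectively $\psi_1$ is bisubmodular), I repeat the computation of Case 3 (respectively Case 4). Write $\widetilde{\varphi}_1(f)=\sup_{g\in S}\int gf$ with $\int g=0$ for all $g\in S$, insert the layer-cake representation \eqref{eq:LCR-pointwise} (respectively \eqref{eq:LCR-pointwise-pair}), and interchange the supremum with the $t$-integral. This gives $\widetilde{\varphi}_1\le\widehat{\varphi}_1$, so $\widetilde{\varphi}_1$ is a sub-Choquet extension and the previous paragraph applies. Convexity and submodularity are only used to justify that $\widetilde{\varphi}_1$ sits below the Choquet extension, and this is exactly the inequality direction needed for the numerator in a sup problem.
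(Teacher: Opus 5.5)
Your strategy is the paper's own: it proves this theorem only by ``a completely similar proof'' to the inf-version. You restrict to indicators for the easy direction, use $\widetilde{\varphi}_1\le\widehat{\varphi}_1$ in the numerator and $\widehat{\varphi}_2\le\widetilde{\varphi}_2$ in the denominator, and use Cases 3--4 to show that a convex extension is sub-Choquet. You also correctly isolated the one step that does not mirror: the levelwise bound $\varphi_1\{f\ge t\}\le D\,\varphi_2\{f\ge t\}$ at levels where $\varphi_2$ vanishes. In the inf-version this comes for free from $\varphi_1\ge0$. Here it is a genuine gap, not a convention to be stated later, because the statement as written is false. Take $J=\{1,2,3\}$, $\varphi_1(\{1\})=1$, $\varphi_2(\{1,2\})=1$, and all other values $0$. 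The only set with $\varphi_2>0$ is $\{1,2\}$, so the left side equals $0$. But $f=(1,\epsilon,0)$ has level set $\{1,2\}$ on $(0,\epsilon]$ and $\{1\}$ on $(\epsilon,1]$, so $\widehat{\varphi}_2(f)=\epsilon$ and $\widehat{\varphi}_1(f)=1-\epsilon$, and the middle supremum is $+\infty$. Setting $\psi_1(\{1\},\varnothing)=1$, $\psi_2(\{1,2\},\varnothing)=1$ and all else $0$ breaks the setpair identity in the same way. The paper's one-line proof passes over exactly this point.

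Your proposed repair does not work either. You claim that a set with $\varphi_2(A)=0<\varphi_1(A)$ makes the functional supremum over $\widehat{\varphi}_2(f)>0$ infinite. This fails because $\widehat{\varphi}_2(1_A+\epsilon g)$ need not become positive. Take $J=\{1,2\}$ with $\varphi_1(\{1\})=1$, $\varphi_2(\{2\})=1$ and all other values $0$. Every $f$ with $f(1)>f(2)$ has $\widehat{\varphi}_2(f)=0$, and every $f$ with $f(2)>f(1)$ has $\widehat{\varphi}_1(f)=0$. So the functional supremum is $0$, while your convention puts $+\infty$ on the left.

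What makes the theorem true is an extra hypothesis: $\varphi_2(A)=0\Rightarrow\varphi_1(A)=0$ for all $A\in\B$ (resp.\ $\psi_2(A,A')=0\Rightarrow\psi_1(A,A')=0$ on $\B_2$). When $D<\infty$ this is the same as $\varphi_1\le D\varphi_2$ on all of $\B$. With it, your chain of inequalities is a complete proof, including the convex-extension variant. The hypothesis holds in the paper's maxcut application, where $\psi_2\equiv2$ off $(\varnothing,\varnothing)$. Alternatively, the identity survives if the convention $c/0=+\infty$ for $c>0$ is imposed on both sides, including functions with $\widetilde{\varphi}_2(f)=0<\widetilde{\varphi}_1(f)$. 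Then $f=1_A$ itself witnesses $+\infty$, and in the complementary case your argument goes through unchanged.
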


\subsection{Proof of Theorem \ref{thm:critical-inequality}}

Hereafter we simply use $f\mapsto f^r$ to represent the Mazur map 
defined by 
\begin{equation}\label{eq:Mazur-map}
f^r(x):=|f(x)|^r\mathrm{sign}(f(x)),\;\;\forall x\in J    
\end{equation}
where $$\mathrm{sign}(t)=\begin{cases}
1,&\text{ if }t>0\\
0,&\text{ if } t=0\\
-1,&\text{ if } t<0\\
\end{cases}$$ is the standard sign function.

\noindent Case 1. Using the original Choquet integral under general setting  

Step 1.1. Prove the inequality 
\begin{equation}\label{ew:p-monotonicity<=r}
\frac{\Phi_p(f^t)}{ \big\llbracket f^t\big\rrbracket_{q}}\le t\frac{\Phi_{ps}(f)}{\big\llbracket f\big\rrbracket_{qt}}
\end{equation} 
for arbitrary $f\in\Bd$.

Proof for Step 1.1:  
Given $p,q,s,t\ge1$ with $ps'=qt'$,  where $s'$ and $t'$ are the H\"older conjugates of $s$ and $t$, respectively. 
If $t=1$, then $t'=\infty$ and $ps'=\infty$, which means that $p=\infty$ or $s'=\infty$, and thus $p=ps$, yielding that \eqref{ew:p-monotonicity<=r} reduces to a trivial identity. 
Hence, without loss of generality, we may assume $1<t<\infty$. 

Since in Case 1 we work with the original Choquet integral,  by noting that $\frac{d}{dx}|x|^{t-1}x=t|x|^{t-1}$, we have the following inequality 
\begin{align}
\int_E\big(\widehat{\varphi}_e(f^t)\big)^pd\eta(e)&=\int_E\Big(\int_{\inf f}^\infty \varphi_e(f^t\ge x)dx+\varphi_e(X)\inf f^t\Big)^pd\eta(e) \notag
\\(\text{taking }x=|y|^{t-1}y)&=\int_E\Big(t\int_{\inf f}^\infty |y|^{t-1}\varphi_e(f^t\ge y^t)dy\Big)^pd\eta(e)  \notag
\\&=t^p\int_E\Big(\int_{\inf f}^\infty |y|^{t-1}\varphi_e(f\ge y)dy\Big)^pd\eta(e) \notag 
\\&=t^p\int_E\Big(\int_{\inf f}^{\sup f} |y|^{t-1}\varphi_e(f\ge y)dy\Big)^pd\eta(e) \label{eq:=r^p..} 
\\&\le t^p\int_E|f|_{\varphi_e}^{(t-1)p}\Big(\int_{\inf f}^{\sup f} \varphi_e(f\ge y)dy\Big)^pd\eta(e) \notag
\\&= t^p\int_E|f|_{\varphi_e}^{(t-1)p}\big(\widehat{\varphi}_e(f)\big)^pd\eta(e) \notag
\\&\le t^p\left(\int_E|f|_{\varphi_e}^{(t-1)ps'}d\eta(e)\right)^{\frac{1}{s'}}\left(\int_E\big((\widehat{\varphi}_e(f)\big)^{ps}d\eta(e)\right)^{\frac{1}{s}}\notag
\\&=t^p\left(\int_E|f|_{\varphi_e}^{qt}d\eta(e)\right)^{\frac{1}{s'}}\left(\int_E\big((\widehat{\varphi}_e(f)\big)^{ps}d\eta(e)\right)^{\frac{1}{s}}\notag
\end{align}
where we have set 
$|f|_{\varphi_e}:=\max\{\sup\{t\in \R: \varphi_e\{f\ge t\}> 0\},-\inf\{t\in \R: \varphi_e\{f\ge t\}> 0\}\}$ as the  infinity norm of $f$ with respect to $\varphi_e$. 

Then we obtain 
\[ \frac{\Phi_p(f^t)}{ \big\llbracket f^t\big\rrbracket_{q}}=\frac{\left(\int_E\big(\widehat{\varphi}_e(f^t)\big)^pd\eta(e)\right)^{\frac1p}}{\left(\int_E|f^t|_{\varphi_e}^{q}d\eta(e)\right)^{\frac{1}{q}}} \le t\frac{\left(\int_E\big((\widehat{\varphi}_e(f)\big)^{ps}d\eta(e)\right)^{\frac{1}{ps}}}{\left(\int_E|f|_{\varphi_e}^{qt}d\eta(e)\right)^{\frac{1}{qt}}}=t\frac{\Phi_{ps}(f)}{\big\llbracket f\big\rrbracket_{qt}}
\]
where we have used the relation $\frac1q-\frac{1}{ps'}=\frac{1}{qt}$ that is equivalent to the assumption $ps'=qt'$. This finishes the proof.

Step 1.2. Use the min-max principle to derive the inequality \eqref{eq:c-monotone1}.

Proof for Step 1.2: Observe that according to  the assumption on $\mathcal{Y}$, every $t>0$ induces a bijection $\mathbf{t}:Y\to Y^t$ defined by $\mathbf{t}(f)= f^t$, and also induces a bijection $\tilde{\mathbf{t}}:\mathcal{Y}\to \mathcal{Y}^t$ defined as  $\tilde{\mathbf{t}}(Y)= Y^t$. 
Taking the min-max procedure on both sides of \eqref{ew:p-monotonicity<=r}, we obtain 
\begin{align*}
c(\Phi_p,\llbracket\cdot\rrbracket_{ q })&:=\inf\limits_{Y\in \mathcal{Y}}\sup\limits_{f\in Y} \frac{\Phi_p(f)}{\llbracket f \rrbracket_{q}}
\\&
=\inf\limits_{Y^t\in \mathcal{Y}}\sup\limits_{g\in Y^t} \frac{\Phi_p(g)}{\llbracket g\rrbracket_{q}}
\\&=\inf\limits_{Y\in \mathcal{Y}}\sup\limits_{f\in Y} \frac{\Phi_p(f^t)}{\llbracket f^t\rrbracket_{q}}
\\ \text{by \eqref{ew:p-monotonicity<=r}}&\le\inf\limits_{Y\in \mathcal{Y}}\sup\limits_{f\in Y}  t\frac{\Phi_{ps}(f)}{\llbracket f\rrbracket_{qt}}=t\cdot  c(\Phi_{ps},\llbracket\cdot\rrbracket_{qt})
\end{align*}
where $Y^t=\{f^t:f\in Y\}\in \mathcal{Y}$. 
This completes the proof of \eqref{eq:c-monotone1} in Case 1. 

Taking $q=p$ and $s=t$, we simply  have $c(\Phi_p,\llbracket \cdot\rrbracket_{p})\le t\cdot  c(\Phi_{pt},\llbracket \cdot\rrbracket_{pt})$, which implies that $p\cdot c(\Phi_p,\llbracket \cdot\rrbracket_{p})$ is increasing with respect to $p$. 

\begin{remark}\label{rem:4-3parameters}
Note that the four parameters $p,q,t,s$ are not independent as they satisfy the relation $ps'=qt'$, which can thus be reduced to three independent parameters. 
Equivalently, \eqref{eq:c-monotone1} can be written in the following form with only three parameters:
For any $p,q,r\ge1$ with  
$(1-\frac1r)pq'\ge1$, where $q'$ is the H\"older conjugate of $q$, we have the following inequality 
\begin{equation}\label{eq:c-monotone12}
c(\Phi_p,\llbracket \cdot\rrbracket_{  (1-\frac1r)pq' })\le r\cdot  c(\Phi_{pq},\llbracket \cdot\rrbracket_{(r-1)pq'}).    
\end{equation}

\end{remark}

Finally, we prove that  $\eta(E)^{\frac1q-\frac1p}c(\Phi_p,\llbracket \cdot\rrbracket_{q})$  is increasing with respect to $p$ and decreasing with respect to $q$. 
We need the following elementary and widely known lemma. 

\begin{lemma}\label{lem:increase-mean-power}
For any probability measure $\mathbb{P}$ on $X$, the function $p\mapsto (\int_X |f(x)|^pd\mathbb{P}(x))^{\frac1p}$ is increasing. 
\end{lemma}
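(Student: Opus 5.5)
The plan is to derive the monotonicity from H\"older's inequality (equivalently, Jensen's inequality), using that $\mathbb{P}$ has total mass one. Fix $0<p<q<\infty$ and set $g:=|f|^p$, which is nonnegative and measurable. The goal $\big(\int_X |f|^p\,d\mathbb{P}\big)^{1/p}\le \big(\int_X |f|^q\,d\mathbb{P}\big)^{1/q}$ becomes, after raising both sides to the power $p$,
\[
\int_X g\,d\mathbb{P}\le\Big(\int_X g^{q/p}\,d\mathbb{P}\Big)^{p/q}.
\]

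To prove this, I will apply H\"older's inequality to the product $g\cdot 1$ with exponents $r=q/p>1$ and $r'=q/(q-p)$:
\[
\int_X g\cdot 1\,d\mathbb{P}\le \Big(\int_X g^{r}\,d\mathbb{P}\Big)^{1/r}\Big(\int_X 1\,d\mathbb{P}\Big)^{1/r'}=\Big(\int_X g^{q/p}\,d\mathbb{P}\Big)^{p/q},
\]
since $\mathbb{P}(X)=1$. Taking $p$-th roots, which is monotone on $[0,\infty]$, gives the claim.

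Some edge cases need a short remark.
\begin{itemize}
\item If $\int |f|^q\,d\mathbb{P}=\infty$, the inequality is trivial.
\item If $\int |f|^p\,d\mathbb{P}=\infty$, then H\"older forces the $q$-integral to be infinite as well, so the inequality still holds.
\item For $q=\infty$, read the right side as $\operatorname{ess\,sup}|f|$; the bound $\int |f|^p\,d\mathbb{P}\le \|f\|_\infty^p$ is immediate.
\end{itemize}

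There is no real obstacle here. The only point worth flagging for the application that follows is the use of this lemma for a finite measure $\eta$ on $E$. There one normalises $\mathbb{P}=\eta/\eta(E)$, which yields exactly the factors $\eta(E)^{-1/p}$ and $\eta(E)^{1/q}$ appearing in Theorem \ref{thm:critical-inequality}. These factors are then applied separately to $\Phi_p$ in the numerator and $\llbracket\cdot\rrbracket_q$ in the denominator, before passing through the min-max over $\mathcal{Y}$.
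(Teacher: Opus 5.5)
Your proof is correct: Hölder's inequality with exponents $q/p$ and $q/(q-p)$ applied to $|f|^p\cdot 1$, combined with $\mathbb{P}(X)=1$, gives the (non-strict) monotonicity claimed, and your handling of infinite integrals and of $q=\infty$ is sound. The paper gives no proof and simply calls the lemma elementary and widely known. Your argument is the standard proof it relies on, and it is applied with $\mathbb{P}=\eta/\eta(E)$ as you describe, which tacitly requires $0<\eta(E)<\infty$.
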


By Lemma \ref{lem:increase-mean-power}, $$\eta(E)^{-\frac1p}\Phi_p(f)=\Big(\frac{\int_E \big(\widehat{\varphi}_e(f)\big)^pd\eta(e)}{\eta(E)}\Big)^{\frac1p}$$
increases with respect to $p$, and 
$$\eta(E)^{-\frac1q}\big\llbracket f \big\rrbracket_q =\Big(\frac{\int_E|f|_{\varphi_e}^qd\eta(e)}{\eta(E)}\Big)^{\frac1q}$$
increases with respect to $q$. 
A min-max procedure similar to Step 1.2 concludes the monotonicity property of $\eta(E)^{\frac1q-\frac1p}c(\Phi_p,\llbracket \cdot\rrbracket_{q})$. 

\vspace{0.2cm}

\noindent Case 2. using the disjoint-pair  Choquet integral under general setting  

In this case, we have the following inequality:
\begin{align*}
\int_E\big(\widehat{\psi}_e(f^r)\big)^pd\eta(e)&=\int_E\Big(\int_0^\infty \psi_e(f^r\ge t,f^r\le -t)dt\Big)^pd\eta(e)
\\(\text{taking }t=s^r)&=\int_E\Big(r\int_0^\infty s^{r-1}\psi_e(f^r\ge s^r,f^r\le -s^r)ds\Big)^pd\eta(e) \\&=r^p\int_E\Big(\int_0^\infty s^{r-1}\psi_e(f\ge s,f\le -s)ds\Big)^pd\eta(e) 
\\&=r^p\int_E\Big(\int_0^{\|f\|_{\psi_e}} s^{r-1}\psi_e(f\ge s,f\le -s)ds\Big)^pd\eta(e)  
\\&\le r^p\int_E\|f\|_{\psi_e}^{(r-1)p}\Big(\int_0^{\|f\|_{\psi_e}} \psi_e(f\ge s,f\le -s)ds\Big)^pd\eta(e)
\\&= r^p\int_E\|f\|_{\psi_e}^{(r-1)p}\big(\widehat{\psi}_e(f)\big)^pd\eta(e)
\\&\le r^p\left(\int_E\|f\|_{\psi_e}^{(r-1)pq'}d\eta(e)\right)^{\frac{1}{q'}}\left(\int_E\big((\widehat{\psi}_e(f)\big)^{pq}d\eta(e)\right)^{\frac{1}{q}}.
\end{align*}
The remaining proof is similar to that of Case 1, and concerning Remark \ref{rem:4-3parameters}, we omit the details.

\subsection{Proof of Theorem \ref{thm:critical-inequality2}}

Case 1. 
We further suppose that $J$, $E$ and $\{\varphi_e\}_{e\in E}$  satisfy Definitions \ref{def:hyper-phi-concentrate} and  \ref{def:hyper-degree-related}, and prove  \eqref{eq:c-monotone1} by using $\widetilde{\llbracket \cdot\rrbracket}$ instead of $\llbracket \cdot\rrbracket$.

\noindent Proof of Case 1: 
Note that $J$ and $E$ are degree-related, and for each $e\in E$, $\varphi_e$ is concentrated on $e$. 
In this case, the infinity norm of $f$ with respect to $\varphi_e$, i.e.,  $|f|_{\varphi_e}:=\inf \big\{t>0: \varphi_e\{f\ge s\}=\varphi_e\{f\ge -s\}=0,\forall s\ge t\big\}$ is less than or equal to $\max_{x\in e_J}|f(x)|$. Therefore, $|f^t|_{\varphi_e}^{q}=|f|_{\varphi_e}^{qt}\le \max_{x\in e_J}|f(x)|^{qt}\le \sum_{x\in e_J}|f(x)|^{qt}$ and $$\int_E|f|_{\varphi_e}^{qt}d\eta(e)\le \int_E\sum_{x\in e_J}|f(x)|^{qt}d\eta(e)\le\int_J\deg(x)|f(x)|^{qt}d\mu(x).$$  
This implies $c(\Phi_p,\widetilde{\llbracket \cdot\rrbracket}_{q})\le t\cdot  c(\Phi_{ps},\widetilde{\llbracket \cdot\rrbracket}_{qt})$. Below, we show an approach to enhance the degree term.

\begin{prop}\label{pro:phi-rKcpq}
Suppose that $C_{JE}:=\sup_{e\in E}(|e_J|-1)<\infty$, and let $K_{p,q}=\big(\frac{C_{JE}^{p-1}}{2}\min \{2,C_{JE}\}\big)^{\frac{1}{pq'}}$. Then
$$c(\Phi_p,\widetilde{\llbracket \cdot\rrbracket}_{  (1-\frac1r)pq' })\le r   K_{p,q}\cdot  c(\Phi_{pq},\widetilde{\llbracket \cdot\rrbracket}_{(r-1)pq'})$$ 
\end{prop}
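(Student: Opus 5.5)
The plan is to rerun the two-step scheme from the proof of Theorem \ref{thm:critical-inequality}: a pointwise inequality for the Mazur map, then the min-max procedure of Step 1.2. The only new ingredient is a sharper treatment of the weight $|y|^{r-1}$, which uses that each $\varphi_e$ is concentrated on the finite set $e_J$. Set $m:=(r-1)pq'$, so that the target pointwise inequality reads
\[
\frac{\Phi_p(f^r)}{\widetilde{\llbracket f^r\rrbracket}_{(1-\frac1r)pq'}}\le rK_{p,q}\,\frac{\Phi_{pq}(f)}{\widetilde{\llbracket f\rrbracket}_{m}} .
\]
Taking $\inf_{Y}\sup_{f\in Y}$ and using the admissibility of $\mathcal{Y}$ under $f\mapsto f^r$ exactly as in Step 1.2 then yields the proposition.

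\textbf{Step 1: reduce each $\widehat{\varphi}_e$ to a finite sum.} Fix $e\in E$ and list the values of $f$ on $e_J$ as $x_0\le x_1\le\cdots\le x_k$, with $k\le C_{JE}$. By concentration, $t\mapsto\varphi_e\{f\ge t\}$ is constant, say equal to $a_i\ge0$, on each interval $(x_{i-1},x_i]$, and it vanishes outside $[x_0,x_k]$ because $\varphi_e(J)=\varphi_e(\emptyset)=0$. The substitution $x=|y|^{r-1}y$ from \eqref{eq:=r^p..} then gives
\[
\widehat{\varphi}_e(f^r)=r\sum_{i=1}^k a_i\int_{x_{i-1}}^{x_i}|y|^{r-1}dy\le r\sum_{i=1}^k a_i\,(x_i-x_{i-1})\,M_i^{r-1},
\]
where $M_i:=\max\{|x_{i-1}|,|x_i|\}$. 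This refines the crude bound $|y|^{r-1}\le|f|_{\varphi_e}^{r-1}$ used before.

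\textbf{Step 2: discrete H\"older across the at most $C_{JE}$ intervals, then H\"older in $e$.} Apply H\"older to the sum in Step 1 so that it is bounded by a factor involving $\big(\sum_i a_i(x_i-x_{i-1})\big)^p=\widehat{\varphi}_e(f)^p$ times a factor involving $\sum_i M_i^{(r-1)pq'}$ at the appropriate power. The counting gives $C_{JE}^{p-1}$ from the number of intervals. Each vertex value $|x_j|$ appears as an endpoint maximum in at most two consecutive intervals, and never more than $C_{JE}$ times. This gives $\sum_i M_i^m\le\min\{2,C_{JE}\}\sum_{x\in e_J}|f(x)|^m$. The factor $\tfrac12$ I expect to extract by noting that in the adjacent-interval count one endpoint of each $M_i$ is wasted, so only half of the doubled sum is needed.

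Next integrate over $E$ and apply H\"older with exponents $(q,q')$ as in Case 2 of the proof of Theorem \ref{thm:critical-inequality}. The degree relation \eqref{eq:degree-related} converts $\int_E\sum_{x\in e_J}|f(x)|^m\,d\eta$ into $\int_J\deg|f|^m\,d\mu=\widetilde{\llbracket f\rrbracket}_m^m$. The denominator is handled by the identity $\widetilde{\llbracket f^r\rrbracket}_{(1-\frac1r)pq'}=\widetilde{\llbracket f\rrbracket}_m^{\,r}$ together with the exponent identity $\frac{1}{p}\cdot\frac{1}{q'}\cdot m-r\cdot\frac{m}{(1-1/r)pq'}\cdot\frac1m\cdots$. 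I will verify this exponent matching at the end exactly as in Step 1.1, and it should leave the constant $rK_{p,q}$ with $K_{p,q}^{pq'}=\frac{C_{JE}^{p-1}}{2}\min\{2,C_{JE}\}$.

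\textbf{Main obstacle.} The main obstacle is Step 2: arranging the H\"older splitting so that the weights $a_i(x_i-x_{i-1})$ reassemble exactly into $\widehat{\varphi}_e(f)^p$ while the vertex terms carry precisely the constant $\frac{C_{JE}^{p-1}}{2}\min\{2,C_{JE}\}$. I would first try H\"older in the form
\[
\Big(\sum_i u_iv_i\Big)^p\le\Big(\sum_i u_i\Big)^p\max_i v_i^p,
\]
and then bound the maximum by a power mean over the endpoints. If the factor $\tfrac12$ does not come out this way, I would fall back to the averaging trick that produces the $p$-mean $\big(\frac{|f(x)|^p+|f(y)|^p}{2}\big)^{1/p}$ in Example \ref{exam:nonlocal-graphon}, which is the case $C_{JE}=1$.
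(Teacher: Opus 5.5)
Your skeleton (Mazur substitution, a level function that is piecewise constant on at most $C_{JE}$ intervals, H\"older first in $i$ and then in $e$, the degree relation, and the min-max step) is the paper's. The exponent bookkeeping you defer also closes, since $\frac{1}{pq'}-\frac{r}{(r-1)pq'}=-\frac{1}{(r-1)pq'}$. The gap is in Step 1.

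Start with which case actually needs proof. For $C_{JE}\ge 2$ one has $K_{p,q}^{pq'}=C_{JE}^{p-1}\ge 1$. There the statement already follows from the crude bound $|f|_{\varphi_e}\le\max_{x\in e_J}|f(x)|$ at the start of Case 1 of the proof of Theorem \ref{thm:critical-inequality2}, which is exactly the proposition with $K_{p,q}=1$. Your ``first try'' $(\sum_i u_iv_i)^p\le(\sum_i u_i)^p\max_i v_i^p$ only reproduces this bound.

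The only case with content is $C_{JE}=1$, where $K_{p,q}=2^{-1/(pq')}<1$. For graphs with $p=1$, $q=r=2$, this is the difference between $h^2/2\le\lambda_2$ and $h^2/4\le\lambda_2$. Here every $e$ has a single interval $[x_0,x_1]$. Your bound $|y|^{r-1}\le M_1^{r-1}$ then leaves the weight $\max\{|x_0|,|x_1|\}^{(r-1)pq'}$. This is not bounded by $\frac12\bigl(|x_0|^{(r-1)pq'}+|x_1|^{(r-1)pq'}\bigr)$: take $x_0=0$. So the hoped-for factor $\frac12$ cannot be extracted, and ``one endpoint of each $M_i$ is wasted'' is simply false. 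The maximum always dominates the average termwise, and with one interval the doubled sum has nothing to spare. As written, your argument gives at best $K=1$ when $C_{JE}=1$.

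The missing idea is to replace the maximum by the $r$-power mean of the endpoints. By Jensen for the concave map $u\mapsto u^{1-\frac1r}$ and Hermite--Hadamard for the convex function $|y|^r$, for all $a<b$,
\[
|b|^{r-1}b-|a|^{r-1}a=r\int_a^b|y|^{r-1}\,dy\le r(b-a)\Bigl(\frac{|a|^r+|b|^r}{2}\Bigr)^{1-\frac1r}.
\]
This is the estimate the paper applies on each $(s_{e,i-1},s_{e,i})$. The remaining steps are then:
\begin{enumerate}
\item Apply H\"older in $i$ with exponents $(q,q')$, together with $\sum_i a_i^q\le(\sum_i a_i)^q$ and $(\sum_i b_i)^p\le|I_e|^{p-1}\sum_i b_i^p$, and then H\"older in $e$.
\item Use convexity of $u\mapsto u^{(1-\frac1r)pq'}$ to get $\bigl(\frac{|a|^r+|b|^r}{2}\bigr)^{(1-\frac1r)pq'}\le\frac12\bigl(|a|^{(r-1)pq'}+|b|^{(r-1)pq'}\bigr)$. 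This is exactly where the condition $(1-\frac1r)pq'\ge1$ enters (implicit in the statement, cf.\ Remark \ref{rem:4-3parameters}); your proposal never uses it.
\item Sum these averages over consecutive intervals to get $\frac{\min\{2,|I_e|\}}{2}\sum_{j\in I_e\cup\{0\}}|s_{e,j}|^{(r-1)pq'}$. The $\frac12$ in $K_{p,q}$ comes from the single-interval case $|I_e|=1$.
\end{enumerate}
Your fallback to ``the averaging trick'' points in this direction, but the proposal does not actually carry it out.
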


\begin{proof}
By condition, for any $e\in E$, $e_J:=\{x\in J:e\text{ corresponds to }x\}$ is a finite subset of $J$. 
Thus, $\{\{f\ge s\}\cap e_J:s\in \R\}$ and 
$\{\varphi_e\{f\ge s\}:s\in \R\}$ are finite sets. Therefore, there exist $-\infty<s_0 < s_1< \cdots< s_m< +\infty
$ such that both $\{f\ge s\}\cap e_J$ and $\varphi_e\{f\ge s\}$ are constant on each interval $(s_{i-1},s_i)$, $i=1,\cdots,m$. Clearly, $\varphi_e\{f\ge s\}=0$ when $s> s_m$ or $s\le s_0$. 
We further assume that the $m$ mentioned above is the smallest positive integer satisfying the aforementioned property. 
Now, to distinguish these $s_1,\cdots,s_m$ for different $e\in E$, in the sequel, we write $s_{e,i}$ instead of $s_i$, and use $I_e$ to denote their indexes $\{1,\cdots,m\}$. \;\;
Denote by
\begin{equation}\label{eq:e_phi}
e_\varphi=\{x\in e_J:\varphi_e(S\setminus x)\ne \varphi_e(S)\text{ for some }S\subset e_J\}.    
\end{equation}
We shall prove the following argument. 

Argument 1: Fixed a function $f\in\Bd$, for any $e\in E$, we have
$$\{s_{e,1},\cdots,s_{e,m}\}\subset \{f(x):x\in e_\varphi\}\subset \{f(x):x\in e_J\} $$
and 
$$e_{\varphi,f}:=\{x\in e_\varphi:f(x)=s_{e,i}\text{ for some }i\}\subset e_\varphi\subset e_J.$$
Proof of Argument 1: It suffices to prove the first inclusion relation. Suppose the contrary, that there exists $s_{e,i}$ such that $s_{e,i}\ne f(x)$ for any $x\in e_\varphi$. 
Suppose $f^{-1}(s_{e,i})\cap e_J=\{x_{e,1},\cdots,x_{e,k}\}$. Then $\{x_{e,1},\cdots,x_{e,k}\}\cap e_\varphi=\emptyset$, and there exists sufficiently small $\varepsilon>0$ such that $\{f\ge s_{e,i}+\varepsilon\}\cap e_J=\{f\ge s_{e,i}\}\cap e_J\setminus\{x_{e,1},\cdots,x_{e,k}\}$ and 
\begin{equation}\label{eq:s_e,i-ne-+}
\varphi_e\{f\ge s_{e,i}+\varepsilon\}\ne \varphi_e\{f\ge s_{e,i}\}    
\end{equation} (for instance, we can simply take $0<\varepsilon<\frac12 (s_{e,i+1}-s_{e,i})$ to guarantee‌ this property). 

Since $x_{e,1}\not\in e_\varphi$, by \eqref{eq:e_phi}, for any $S\subset e_J$, $\varphi_e(S)=\varphi_e(S\setminus x_{e,1})$. In particular, take $S=\{f\ge s_{e,i}\}\cap e_J$ and then we derive that 
$$\varphi_e\{f\ge s_{e,i}\}=\varphi_e(\{f\ge s_{e,i}\}\cap e_J)=\varphi_e(\{f\ge s_{e,i}\}\cap e_J\setminus x_{e,1})=\varphi_e(\{f\ge s_{e,i}\} \setminus x_{e,1}).$$ 

Since $x_{e,2}\not\in e_\varphi$, for any $S\subset e$, $\varphi_e(S)=\varphi_e(S\setminus x_{e,2})$. We can then take $S=\{f\ge s_{e,i}\}\cap e_J\setminus x_{e,1}$, and further derive $\varphi_e(\{f\ge s_{e,i}\} \setminus x_{e,1})=\varphi_e(\{f\ge s_{e,i}\} \setminus \{x_{e,1},x_{e,2}\})$.

Repeating the above process, we finally obtain 
\begin{align*}
\varphi\{f\ge s_{e,i}\}&=\varphi\{f\ge s_{e,i}\}
\\&=\varphi(\{f\ge s_{e,i}\} \setminus \{x_{e,1}\})
\\&=\varphi(\{f\ge s_{e,i}\} \setminus \{x_{e,1},x_{e,2}\})
\\&=\cdots
\\&=    \varphi(\{f\ge s_{e,i}\} \setminus \{x_{e,1},x_{e,2},\cdots,x_{e,k}\})
\\&=\varphi\{f\ge s_{e,i}+\varepsilon\}
\end{align*}
which contradicts \eqref{eq:s_e,i-ne-+}. The proof of Argument 1 is then completed.

\vspace{0.2cm}

Back to the proof of Case 1. By Argument 1, $e_{\varphi,f}\subset e_J$. 
Then, by Eq.~\eqref{eq:=r^p..}, 
\begin{align}
&\int_E\big(\widehat{\varphi}_e(f^r)\big)^pd\eta(e)\notag
\\=~& r^p\int_E\Big(\int_{-|f|_{\varphi_e}}^{|f|_{\varphi_e}} |s|^{r-1}\varphi_e(f\ge s)ds\Big)^pd\eta(e)   \notag
\\=~& \int_E\Big(\sum_{i\in I_e} \int_{s_{e,i-1}}^{s_{e,i}}r|s|^{r-1}\varphi_e(f\ge s)ds\Big)^pd\eta(e) \notag
\\=~&\int_E\Big(\sum_{i\in I_e} (|s_{e,i}|^{r-1}s_{e,i}-|s_{e,i-1}|^{r-1}s_{e,i-1})\varphi_e(f\ge s_{e,i-1})\Big)^pd\eta(e)  \label{eq:s^r-s^r}
\\ \le~&\int_E\left(\sum_{i\in I_e}r(s_{e,i}-s_{e,i-1})\big(\frac{|s_{e,i}|^r+|s_{e,i-1}|^r}{2}\big)^{\frac{1}{r'}}\varphi_e(f\ge s_{e,i-1})\right)^pd\eta(e)  \notag
\\ \le~&
r^p\left(\int_E\Big(\sum_{i\in I_e}(s_{e,i}-s_{e,i-1})^q\varphi_e^q(f\ge s_{e,i-1})\Big)^pd\eta(e)\right)^{\frac1q}\times \label{eq:Holder-twice}
\\&~~~~~~~~~~~~~~\times
\left(\int_E\Big(\sum_{i\in I_e} \big(\frac{|s_{e,i}|^r+|s_{e,i-1}|^r}{2}\big)^{\frac{q'}{r'}}\Big)^pd\eta(e)\right)^{\frac{1}{q'}}  \notag
\\ \le~&
r^p\left(\int_E\Big(\sum_{i\in I_e}(s_{e,i}-s_{e,i-1})\varphi_e(f\ge s_{e,i-1})\Big)^{qp}d\eta(e)\right)^{\frac1q}\times\label{eq:constantC}
\\&~~~~~~~~~~~~~~\times
\left(\int_E|I_e|^{p-1}\sum_{i\in I_e} \big(\frac{|s_{e,i}|^r+|s_{e,i-1}|^r}{2}\big)^{\frac{q'}{r'}p}d\eta(e)\right)^{\frac{1}{q'}} \notag
\\ \le~&
r^p\left(\int_E\Big(\widehat{\varphi}_e(f)\Big)^{qp}d\eta(e)\right)^{\frac1q}
\left(\int_E|I_e|^{p-1}\sum_{i\in I_e} \frac{|s_{e,i}|^{(r-1)q'p}+|s_{e,i-1}|^{(r-1)q'p}}{2}d\eta(e)\right)^{\frac{1}{q'}} \notag
\\ \le ~&
r^p\sup_{e\in E}\big(\frac{|I_e|^{p-1}}{2}\min \{2,|I_e|\}\big)^{\frac{1}{q'}}\left(\int_E\Big(\widehat{\varphi}_e(f)\Big)^{pq}d\eta(e)\right)^{\frac1q}
\left(\int_E\sum_{i\in I_e\cup\{0\}} |s_{e,i}|^{(r-1)q'p}d\eta(e)\right)^{\frac{1}{q'}} \label{eq:I_ecup0}
\\ \le ~&
r^p \big(\frac{C_{JE}^{p-1}}{2}\min \{2,C_{JE}\}\big)^{\frac{1}{q'}}\left(\int_E\Big(\widehat{\varphi}_e(f)\Big)^{pq}d\eta(e)\right)^{\frac1q}
\left(\int_E\sum_{x\in e_{\varphi,f}} |f(x)|^{(r-1)q'p}d\eta(e)\right)^{\frac{1}{q'}} \notag
\\ \le ~&r^p K_{p,q}^p\left(\int_E\Big(\widehat{\varphi}_e(f)\Big)^{pq}d\eta(e)\right)^{\frac1q}
\left(\int_J \deg(x)|f(x)|^{(r-1)q'p}d\mu(x)\right)^{\frac{1}{q'}} \notag
\end{align}
where the second-to-last inequality makes use of the fact that $|I_e|\le |e_J|-1\le C_{JE}$, the inequality \eqref{eq:Holder-twice} uses the H\"older inequality twice via 
$$ \int \Big(\sum a_ib_i\Big)^p\le \int (\sum_ia_i^q)^{\frac pq}(\sum_ib_i^{q'})^{\frac {p}{q'}}\le \left(\int (\sum_ia_i^q)^p\right)^{\frac1q}\left(\int (\sum_ib_i^{q'})^p\right)^{\frac{1}{q'}}.$$
The inequality \eqref{eq:constantC} is due to the elementary inequalities that $\sum_{i=1}^n a_i^q\le (\sum_{i=1}^n a_i)^q$ and $(\sum_{i=1}^n a_i)^p\le n^{p-1}\sum_{i=1}^n a_i^p $ for any $p,q\ge 1$ and non-negative $a_i$. 
We shall also explain the terms in \eqref{eq:I_ecup0}. In fact, if $|I_e|=m\ge 2$, i.e., $I_e=\{1,\cdots,m\}$, then
\begin{align*}
\sum_{i\in I_e} \frac{|s_{e,i}|^{(r-1)q'p}+|s_{e,i-1}|^{(r-1)q'p}}{2} &=\frac{|s_{e,0}|^{(r-1)q'p}+|s_{e,m}|^{(r-1)q'p}}{2}+\sum_{i=1}^{m-1}|s_{e,i}|^{(r-1)q'p}
\\&\le \sum_{i\in I_e\cup\{0\}} |s_{e,i}|^{(r-1)q'p}.   
\end{align*}
 While, if $|I_e|=1$, i.e., $I_e=\{1\}$, then 
 $$\sum_{i\in I_e} \frac{|s_{e,i}|^{(r-1)q'p}+|s_{e,i-1}|^{(r-1)q'p}}{2} =\frac{|s_{e,0}|^{(r-1)q'p}+|s_{e,1}|^{(r-1)q'p}}{2}=\frac{1}{2}\sum_{i\in I_e\cup\{0\}} |s_{e,i}|^{(r-1)q'p}.$$
In any case, the term
$$\sum_{i\in I_e} \frac{|s_{e,i}|^{(r-1)q'p}+|s_{e,i-1}|^{(r-1)q'p}}{2}\le \frac{\min\{2,|I_e|\}}{2}\sum_{i\in I_e\cup\{0\}} |s_{e,i}|^{(r-1)q'p} $$
which implies \eqref{eq:I_ecup0}. 
Consequently, by the whole inequalities derived above, we obtain 
\begin{align*}
\frac{\Phi_p(f^r)}{\widetilde{\llbracket f^r\rrbracket}_{(1-\frac1r)pq'}}&=\frac{\left(\int_E\big(\widehat{\varphi}_e(f^r)\big)^pd\eta(e)\right)^{\frac1p}}{\left(\int_J\deg(x)|f^r(x)|^{(1-\frac1r)pq'}d\mu(x)\right)^{\frac{1}{(1-\frac1r)pq'}}} 
\\ &\le r \big(\frac{C_{JE}^{p-1}}{2}\min \{2,C_{JE}\}\big)^{\frac{1}{pq'}} \frac{\left(\int_E\big((\widehat{\varphi}_e(f)\big)^{pq}d\eta(e)\right)^{\frac{1}{pq}}}{\left(\int_J \deg(x)|f(x)|^{(r-1)q'p}dx\right)^{\frac{1}{(r-1)pq'}}}
\\&=rK_{p,q}\frac{\Phi_{pq}(f)}{\widetilde{\llbracket f\rrbracket}_{(r-1)pq'}}    
\end{align*}
whenever $(1-\frac1r)pq'\ge1$. 
Then, by a min-max procedure, we obtain the inequality
$$c(\Phi_p,\widetilde{\llbracket \cdot\rrbracket}_{  (1-\frac1r)pq' })\le r K_{p,q}\cdot   c(\Phi_{pq},\widetilde{\llbracket \cdot\rrbracket}_{(r-1)pq'})$$ 
which completes the proof.
\end{proof}


\begin{remark}
If we assume that  $C_{x}:=\sup\limits_{e\in E:x\in e_J} |I_e|^{p-1}<\infty$ for any $x\in J$, then, after the inequality \eqref{eq:constantC}, we can use the following alternative procedure:

Taking $\widetilde{\deg} (x)=\deg (x)C_x$ for any $x\in J$, and using $\widetilde{\deg}$ instead of $\deg$ in the definition of $\widetilde{\llbracket \cdot\rrbracket}$, we derive $c(\Phi_p,\widetilde{\llbracket \cdot\rrbracket}_{  (1-\frac1r)pq' })\le r \cdot  c(\Phi_{pq},\widetilde{\llbracket \cdot\rrbracket}_{(r-1)pq'}) $.
\end{remark}

\begin{remark}
If $C_{JE}=1$, then we can take $\widetilde{\deg} (x)=\frac12\deg (x)$ for any $x\in J$, and use $\widetilde{\deg}$ instead of $\deg$ in the definition of $\widetilde{\llbracket \cdot\rrbracket}$. In this case, the derived inequality 
$c(\Phi_p,\widetilde{\llbracket \cdot\rrbracket}_{  (1-\frac1r)pq' })\le r \cdot  c(\Phi_{pq},\widetilde{\llbracket \cdot\rrbracket}_{(r-1)pq'}) $ improves 
the related result in Theorem \ref{thm:critical-inequality2}.
\end{remark}

\begin{remark}
Suppose that  $J$ and $E$ form a finitely related structure, and for any $e\in E$, $\varphi_e$ is concentrated on $e$. 
In this case, we can simply take a sub-degree function defined as $\widetilde{\deg}_\varphi(x)=\eta\{e\in E: \varphi_e(S)\ne\varphi_e(S\setminus \{x\})\text{ for some }S\subset e_J\text{ with }S,S\setminus \{x\}\in \B\}$.       
\end{remark}

\begin{prop}\label{pro:phi-cpq21-r}
Suppose that $C_{JE}:=\sup\limits_{e\in E}(|e_J|-1)<\infty$ and $C_{\Phi}:=\sup\limits_{e\in E}\|\varphi_e\|_\infty<\infty$. Then, for any degree and its related norm $\widetilde{\llbracket \cdot\rrbracket}$, 
$$c(\Phi_p,\widetilde{\llbracket \cdot\rrbracket}_{q})\ge  C_{JE}^{\frac1p-r}(2C_{\Phi})^{1-r}\big(c(\Phi_{pr},\widetilde{\llbracket \cdot\rrbracket}_{qr})\big)^r.$$ 
\end{prop}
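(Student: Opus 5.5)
The plan is to run the Mazur-map argument of Step 1.2 in reverse. I will compare the functional at $g$ with the functional at $g^r$, but this time bound $\Phi_{pr}(g)$ from above by $\Phi_p(g^r)$, instead of the other way round as in \eqref{ew:p-monotonicity<=r}. For the degree norm this is convenient because $\widetilde{\llbracket g^r\rrbracket}_q^{\,q}=\int_J\deg|g|^{rq}d\mu=\widetilde{\llbracket g\rrbracket}_{qr}^{\,qr}$. Hence $\widetilde{\llbracket g^r\rrbracket}_q=\widetilde{\llbracket g\rrbracket}_{qr}^{\,r}$ exactly, and the whole loss comes from the numerator. It therefore suffices to prove, for every $g\in\Bd$,
\begin{equation*}
\Phi_{pr}(g)^r\le K\,\Phi_p(g^r),\qquad K:=(2C_\Phi)^{r-1}C_{JE}^{\,r-1}.
\end{equation*}
With this $K$, the claimed inequality follows, since $K\le C_{JE}^{\,r-1/p}(2C_\Phi)^{r-1}$ whenever $C_{JE}\ge1$.

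Fix $e\in E$ and use the step decomposition from the proof of Proposition \ref{pro:phi-rKcpq}. Let $s_{e,0}<\dots<s_{e,m}$ be the breakpoints of $t\mapsto\varphi_e\{g\ge t\}$, set $\varphi_{e,i}:=\varphi_e\{g\ge s_{e,i-1}\}$ (constant on $(s_{e,i-1},s_{e,i})$), and note $m\le|e_J|-1\le C_{JE}$. Then $\widehat\varphi_e(g)=\sum_i(s_{e,i}-s_{e,i-1})\varphi_{e,i}$. By the computation \eqref{eq:s^r-s^r}, $\widehat\varphi_e(g^r)=\sum_i(s_{e,i}^r-s_{e,i-1}^r)\varphi_{e,i}$, where $s^r$ denotes the Mazur map. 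The key elementary fact is that for $a<b$ and $r\ge1$,
\begin{equation*}
(b-a)^r\le 2^{r-1}(b^r-a^r).
\end{equation*}
When $a,b$ have the same sign this holds even without the factor $2^{r-1}$, by convexity and superadditivity of $x\mapsto x^r$ on $[0,\infty)$. When the signs are opposite, $(|a|+|b|)^r\le2^{r-1}(|a|^r+|b|^r)=2^{r-1}(b^r-a^r)$.

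Combining this with $(\sum_{i=1}^m x_i)^r\le m^{r-1}\sum x_i^r$ and $\varphi_{e,i}^r\le C_\Phi^{r-1}\varphi_{e,i}$ (using $\varphi_e\ge0$) gives
\begin{equation*}
\widehat\varphi_e(g)^r\le m^{r-1}\sum_i(s_{e,i}-s_{e,i-1})^r\varphi_{e,i}^r\le (2C_\Phi C_{JE})^{r-1}\widehat\varphi_e(g^r).
\end{equation*}
Raising this to the power $p$ and integrating over $E$ against $\eta$ yields $\Phi_{pr}(g)^{pr}\le K^p\Phi_p(g^r)^p$, which is the displayed claim.

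Finally, write $f=g^r$, that is, $g=f^{1/r}$. Then for every $f\in\Bd$
\begin{equation*}
\Big(\frac{\Phi_{pr}(f^{1/r})}{\widetilde{\llbracket f^{1/r}\rrbracket}_{qr}}\Big)^r\le K\,\frac{\Phi_p(f)}{\widetilde{\llbracket f\rrbracket}_q}.
\end{equation*}
Take $\sup_{f\in Y}$ and then $\inf_{Y\in\mathcal Y}$. Since $\mathcal Y$ is admissible, $Y\mapsto Y^{1/r}$ is a bijection of $\mathcal Y$, and $u\mapsto u^r$ is monotone. This gives $c(\Phi_{pr},\widetilde{\llbracket\cdot\rrbracket}_{qr})^r\le K\,c(\Phi_p,\widetilde{\llbracket\cdot\rrbracket}_q)$, exactly as in Step 1.2.

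The main obstacle is the per-edge comparison. One has to check carefully that the breakpoints of $g^r$ are exactly the Mazur images of those of $g$, with the same values $\varphi_{e,i}$; this holds because $s\mapsto s^r$ is a strictly increasing bijection of $\R$. One also has to track the boundary terms near $s_{e,0}$, which vanish because $\varphi_e(J)=0$. The constant $C_{JE}^{\,r-1}$ obtained here is no worse than the stated $C_{JE}^{\,r-1/p}$ when $C_{JE}\ge1$. If a sharper intermediate form is needed, I would apply H\"older inside the $\eta$-integral instead of pointwise.
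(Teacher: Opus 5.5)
Your proposal is correct and follows the paper's route. It uses the same step decomposition over breakpoints and the same elementary bound $b^r-a^r\ge 2^{1-r}(b-a)^r$ for Mazur powers, which the paper obtains from the Mazur-difference lemma together with $\tfrac{|a|^r+|b|^r}{2}\ge(\tfrac{b-a}{2})^r$. It also uses the same estimate $\varphi_e\le C_\Phi$, a power-mean inequality over at most $C_{JE}$ breakpoints, and the same min-max transfer through $Y\mapsto Y^{1/r}$.

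The one difference is the order of steps. You apply the power mean with exponent $r$ before raising to the $p$-th power. The paper instead uses superadditivity $(\sum_i y_i)^p\ge\sum_i y_i^p$ and then a power mean with exponent $rp$. Your ordering gives the slightly stronger factor $C_{JE}^{1-r}$ in place of $C_{JE}^{1/p-r}$, and this implies the stated bound when $C_{JE}\ge 1$.
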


\begin{proof}
By Eqs.~\eqref{eq:=r^p..} 
 and \eqref{eq:s^r-s^r}, $\int_E\big(\widehat{\varphi}_e(f^r)\big)^pd\eta(e)$ equals
\begin{align*}
&\int_E\Big(\sum_{i\in I_e} (|s_{e,i}|^{r-1}s_{e,i}-|s_{e,i-1}|^{r-1}s_{e,i-1})\varphi_e(f\ge s_{e,i-1})\Big)^pd\eta(e)
\\ \ge~& \int_E \sum_{i\in I_e} (|s_{e,i}|^{r-1}s_{e,i}-|s_{e,i-1}|^{r-1}s_{e,i-1})^p\varphi_e(f\ge s_{e,i-1}) ^pd\eta(e)
\\ \ge~&\int_E\sum_{i\in I_e} (s_{e,i}-s_{e,i-1})^p\left(\frac{|s_{e,i}|^r+|s_{e,i-1}|^r}{2}\right)^{(1-\frac1r)p}\varphi_e(f\ge s_{e,i-1})^pd\eta(e)
\\ \ge~&\int_E\frac{1}{ \|\varphi_e\|_\infty^{rp-p}}\sum_{i\in I_e} (s_{e,i}-s_{e,i-1})^p\left(\frac{s_{e,i}-s_{e,i-1}}{2}\right)^{r(1-\frac1r)p}\varphi_e(f\ge s_{e,i-1})^{rp}d\eta(e)
\\ \ge~&\int_E\frac{2^{(1-r)p}}{ \|\varphi_e\|_\infty^{rp-p}}\sum_{i\in I_e} (s_{e,i}-s_{e,i-1})^{rp}\varphi_e(f\ge s_{e,i-1})^{rp}d\eta(e)
\\ \ge~&\int_E \frac{2^{(1-r)p}}{ \|\varphi_e\|_\infty^{rp-p}}|I_e|^{1-rp}\Big(\sum_{i\in I_e} (s_{e,i}-s_{e,i-1})\varphi_e(f\ge s_{e,i-1})\Big)^{rp}d\eta(e)
\\ \ge~&\frac{2^{(1-r)p}}{\sup_e\|\varphi_e\|_\infty^{rp-p}|I_e|^{rp-1}}\int_E \big(\widehat{\varphi}_e(f)\big)^{rp}d\eta(e)
\end{align*}
where the second inequality is based on  the elementary inequality (see \cite[Lemma A.1]{Zhang25}) $$\big||b|^{t-1}b-|a|^{t-1}a\big|\ge|b-a|\left(\frac{|a|^t+|b|^t}{2}\right)^{1-\frac1t}$$
for $t\ge 1$ and $p\ge 1$ and $a,b\in\R$. Therefore, for any given $\widetilde{\llbracket \cdot\rrbracket}$ equipped with any degree  $\widetilde{\deg}:J\to\ (0,+\infty)$, 
$$ \frac{\Big(\int_E\big(\widehat{\varphi}_e(f^r)\big)^pd\eta(e)\Big)^{\frac1p}}{\Big(\widetilde{\llbracket f^r\rrbracket}_q
\Big)^{\frac1q} } \ge  \frac{2^{1-r}}{\sup\limits_e\|\varphi_e\|_\infty^{r-1}|I_e|^{r-\frac1p}}\frac{\Big(\int_E\big((\widehat{\varphi}_e(f)\big)^{pr}d\eta(e)\Big)^{\frac1p}}{\Big(\widetilde{\llbracket  f\rrbracket}_{qr}\Big)^{\frac1q}}$$
Similar to Step 1.2, we have
$$c(\Phi_p,\widetilde{\llbracket \cdot\rrbracket}_{q})\ge  \frac{2^{1-r}}{C_{\Phi}^{r-1}C_{JE}^{r-\frac1p}}\big(c(\Phi_{pr},\widetilde{\llbracket \cdot\rrbracket}_{qr})\big)^r$$ 
\end{proof}

In particular, if $|e_J|=2$ and $\|\varphi_e\|_\infty=1$ for any $e\in E$, i.e., $C_{\Phi}=C_{JE}=1$, we have $c(\Phi_p,\widetilde{\llbracket \cdot\rrbracket}_{q})\ge2^{1-r}\big(c(\Phi_{pr},\widetilde{\llbracket \cdot\rrbracket}_{qr})\big)^r$, yielding that for any $p,q\in [1,+\infty]$, the function 
$$r\mapsto \big(\frac12c(\Phi_{pr},\widetilde{\llbracket \cdot\rrbracket}_{qr})\big)^r$$
is decreasing with respect to $r\ge 1$.

\begin{prop}
Suppose that $\sup_{e\in E}|e_J|<\infty$ and $C_{\Phi}:=\sup_{e\in E}\|\varphi_e\|_\infty<\infty$. Then
$$c_k(\Phi_1,\widetilde{\llbracket \cdot\rrbracket}_{1})\ge  (2C_{\Phi})^{1-r}\big(c_k(\Phi_{r},\widetilde{\llbracket \cdot\rrbracket}_{r})\big)^r$$ 
\end{prop}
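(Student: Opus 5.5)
The plan is to prove a pointwise inequality for each fixed $f\in\Bd$, namely
$$\frac{\Phi_1(f^r)}{\widetilde{\llbracket f^r\rrbracket}_1}\;\ge\;(2C_\Phi)^{1-r}\Big(\frac{\Phi_r(f)}{\widetilde{\llbracket f\rrbracket}_r}\Big)^r,$$
and then pass to $c_k$ by the min-max argument of Step 1.2. The Mazur map $f\mapsto f^r$ is an odd homeomorphism of $\Bd$. It therefore preserves the Krasnoselskii genus, so $Y\mapsto Y^r$ is a bijection of $\mathcal{Y}_k$. Taking $\sup_{f\in Y}$ and then $\inf_{Y\in\mathcal Y_k}$ on both sides turns the pointwise inequality into the claim, because $x\mapsto (2C_\Phi)^{1-r}x^r$ is increasing. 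Unlike Proposition \ref{pro:phi-cpq21-r}, I will not split the Choquet integral into the pieces indexed by $I_e$. Working directly with $p=1$ is exactly what avoids the factor $C_{JE}^{\frac1p-r}$.

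The denominators are easy. Since $|f^r|=|f|^r$, we get exactly $\widetilde{\llbracket f^r\rrbracket}_1=\int_J\deg|f|^r\,d\mu=\widetilde{\llbracket f\rrbracket}_r^{\,r}$. So it suffices to show, for each $e\in E$,
$$\widehat{\varphi}_e(f^r)\;\ge\;(2C_\Phi)^{1-r}\big(\widehat{\varphi}_e(f)\big)^r,$$
and then integrate over $E$ against $\eta$; the left side integrates to $\Phi_1(f^r)$ and the right side to $(2C_\Phi)^{1-r}\Phi_r(f)^r$. We have $\varphi_e(J)=0$, so the substitution in \eqref{eq:=r^p..} gives
$$\widehat{\varphi}_e(f^r)=r\int_{\R}|s|^{r-1}g(s)\,ds,\qquad \widehat{\varphi}_e(f)=\int_{\R}g(s)\,ds=:m,$$
where $g(s):=\varphi_e\{f\ge s\}$ satisfies $0\le g\le \|\varphi_e\|_\infty\le C_\Phi$ and has bounded support.

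The key step, and the main obstacle, is a bathtub (rearrangement) estimate. Among all measurable $g$ with $0\le g\le C:=C_\Phi$ and $\int g=m$, the weighted integral $\int r|s|^{r-1}g(s)\,ds$ is minimized by putting the mass where the weight $r|s|^{r-1}$ is smallest. That weight is even and increasing in $|s|$, so the minimizer is $g=C\,1_{[-a,a]}$ with $a=m/(2C)$. To justify this, compare any admissible $g$ with $g^*=C1_{[-a,a]}$. The difference $g-g^*$ is $\le 0$ on $[-a,a]$ and $\ge0$ outside, and has total integral zero. The weight is $\le ra^{r-1}$ inside $[-a,a]$ and $\ge ra^{r-1}$ outside. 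Hence
$$\int r|s|^{r-1}(g-g^*)\,ds\ge ra^{r-1}\int(g-g^*)\,ds=0.$$
The minimum value is $2C\,a^r=(2C)^{1-r}m^r$, which is the per-$e$ inequality. I expect the only delicate points to be bookkeeping. First, $\widehat{\varphi}_e$ may be taken as $m\ge0$ because $\varphi_e\ge0$. Second, the case $m=0$ is trivial. Third, $r\ge1$ is needed for the weight to be nondecreasing in $|s|$.

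Combining these steps yields the pointwise inequality, and the min-max step then gives the statement. No use is made of $|e_J|\le 2$. The bound $\sup_e|e_J|<\infty$ enters only to ensure, as in Argument 1, that $g$ is a step function with finitely many jumps, so all integrals above are finite.
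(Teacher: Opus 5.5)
Your proof is correct, and it takes a genuinely different route from the paper.

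\textbf{The paper's route.} It is a two-line reduction. It asserts, by analogy with Theorem~\ref{thm:tilde-fg-equal}, that the min-max value $c_k(\Phi_1,\widetilde{\llbracket \cdot\rrbracket}_1)$ can be computed on indicator functions. For these, every $e$ has a single breakpoint interval ($|I_e|=1$, so effectively $C_{JE}=1$). It then quotes Proposition~\ref{pro:phi-cpq21-r}. That proposition cuts $\widehat{\varphi}_e(f^r)$ at the breakpoints $s_{e,i}$, bounds each piece by the elementary Mazur-difference inequality, and recombines the pieces with a power-mean step. The power-mean step is exactly where the factor $|I_e|^{r-\frac1p}$ enters.

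\textbf{Your route.} You prove the per-$e$ estimate $\widehat{\varphi}_e(f^r)\ge(2C_\Phi)^{1-r}\widehat{\varphi}_e(f)^r$ for \emph{every} $f\in\Bd$ at once. You do this by the bathtub principle for $g(s)=\varphi_e\{f\ge s\}$ against the even weight $r|s|^{r-1}$. The constant is sharp: equality holds for $f=1_A-1_B$ when $\varphi_e(A)=\varphi_e(J\setminus B)=C_\Phi$.

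\textbf{What each buys.} The paper's route is shorter, given the breakpoint machinery already in place. Yours is more robust, for the following reasons.
\begin{itemize}
\item The min-max step needs the ratio inequality for all $f$ in a genus-$k$ set. For $k\ge2$, such sets cannot consist of multiples of indicator functions.
\item Knowing that $c_k(\Phi_1,\cdot)$ is attained at one indicator $f_0$ would only give the lower bound $(2C_\Phi)^{1-r}R_r(f_0)^r$, where $R_r(f_0):=\Phi_r(f_0)/\widetilde{\llbracket f_0\rrbracket}_r$. This $R_r(f_0)$ need not be $\ge c_k(\Phi_r,\cdot)$.
\item So your argument supplies rigorously what the paper only sketches.
\item It does not use concentration, degree-relatedness, or $\sup_e|e_J|<\infty$. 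Integrability of $g$ is part of $\widehat{\varphi}_e$ being defined, and $g$ is bounded with bounded support, so you need not invoke Argument 1.
\end{itemize}

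\textbf{A correction to your own commentary.} It is not the restriction to $p=1$ that removes $C_{JE}^{\frac1p-r}$; it is the bathtub bound itself. Raise your per-$e$ bound to the $p$-th power before integrating. This gives $\Phi_p(f^r)\ge(2C_\Phi)^{1-r}\Phi_{pr}(f)^r$. Combined with $\widetilde{\llbracket f^r\rrbracket}_q=\widetilde{\llbracket f\rrbracket}_{qr}^{\,r}$, it yields $c(\Phi_p,\widetilde{\llbracket \cdot\rrbracket}_q)\ge(2C_\Phi)^{1-r}\big(c(\Phi_{pr},\widetilde{\llbracket \cdot\rrbracket}_{qr})\big)^r$ for all $p,q\ge1$. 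This is Proposition~\ref{pro:phi-cpq21-r} without the factor $C_{JE}^{\frac1p-r}$. It also gives the lower bound of Theorem~\ref{thm:critical-inequality2} without assuming $|e_J|\le2$ or $p=q=1$.
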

\begin{proof}
In this case, using a statement similar to Theorem \ref{thm:tilde-fg-equal}, it is not difficult to verify that the infimum related to $c(\Phi_1,\widetilde{\llbracket \cdot\rrbracket}_{1})$ can be taken on indicator functions, and for every indicator function, the corresponding $|I_e|$ is constant 1.
Thus, $C_{JE}=1$ and Proposition \ref{pro:phi-cpq21-r} concludes the desired inequality.  
\end{proof}

\noindent Case 2. using the disjoint-pair  Choquet integral under general setting  

Now we consider $\widetilde{\llbracket \cdot\rrbracket}_p$ instead of $\llbracket \cdot\rrbracket_p$. In this case, the infinity norm of $f$ with respect to $\psi_e$, i.e., $\|f\|_{\psi_e}=\inf\{t>0:\psi_e(f\ge s,f\le-s)= 0,\forall s\ge t\}$  is less than or equal to $\max_{x\in e_J}|f(x)|$. Therefore, $\|f^t\|_{\psi_e}^{q}=\|f\|_{\psi_e}^{qt}\le \max_{x\in e_J}|f(x)|^{qt}\le \sum_{x\in e_J}|f(x)|^{qt}$ and $$\int_E|f|_{\psi_e}^{qt}d\eta(e)\le \int_E\sum_{x\in e_J}|f(x)|^{qt}d\eta(e)\le\int_J\deg(x)|f(x)|^{qt}d\mu(x).$$  
This implies $c(\Psi_p,\widetilde{\llbracket \cdot\rrbracket}_{q})\le t\cdot  c(\Psi_{ps},\widetilde{\llbracket \cdot\rrbracket}_{qt})$. Below, we show an approach to enhance the degree term.

\begin{prop}
Suppose that $C_{JE}:=\sup_{e\in E}(|e_J|-1)<\infty$, and let $K_{p,q}=\big(\frac{C_{JE}^{p-1}}{2}\min \{2,C_{JE}\}\big)^{\frac{1}{pq'}}$. Then
$$c(\Psi_p,\widetilde{\llbracket \cdot\rrbracket}_{  (1-\frac1r)pq' })\le r   K_{p,q}\cdot  c(\Psi_{pq},\widetilde{\llbracket \cdot\rrbracket}_{(r-1)pq'})$$ 
\end{prop}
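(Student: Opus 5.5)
The plan is to mirror the proof of Proposition \ref{pro:phi-rKcpq} line by line, replacing the level-set pairs of the original Choquet extension by those of the disjoint-pair extension. Fix $f\in\Bd$ and $e\in E$. Since $\psi_e$ is concentrated on $e$ and $e_J$ is finite, the map $s\mapsto(\{f\ge s\}\cap e_J,\{f\le -s\}\cap e_J)$, and hence $s\mapsto \psi_e(f\ge s,f\le -s)$, is piecewise constant on $(0,\infty)$. So there are breakpoints $0=s_{e,0}<s_{e,1}<\cdots<s_{e,m}$, with $m$ minimal, such that $\psi_e(f\ge s,f\le-s)$ is constant on each $(s_{e,i-1},s_{e,i}]$ and vanishes for $s>s_{e,m}$. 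The analogue of Argument 1 then says that every $s_{e,i}$ with $i\ge1$ equals $|f(x)|$ for some $x\in e_\psi:=\{x\in e_J:\psi_e \text{ changes when } x \text{ is removed from one of the two sets}\}$. The proof is the same contradiction: if no such $x$ exists, removing the points of level $\pm s_{e,i}$ one at a time does not change $\psi_e$, which contradicts the minimality of $m$. In particular $|I_e|=m\le |e_J|$.

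Next I would rewrite the Case 2 computation from the proof of Theorem \ref{thm:critical-inequality} in discrete form:
\[
\widehat{\psi}_e(f^r)=\sum_{i\in I_e}\big(s_{e,i}^r-s_{e,i-1}^r\big)\psi_e(f\ge s_{e,i},f\le -s_{e,i}).
\]
Because all $s_{e,i}\ge 0$, the mean value bound is easier here than in the signed case:
\[
s_{e,i}^r-s_{e,i-1}^r\le r(s_{e,i}-s_{e,i-1})\Big(\tfrac{s_{e,i}^r+s_{e,i-1}^r}{2}\Big)^{1/r'},
\]
which holds by convexity of $x\mapsto x^{r-1}$ together with a power-mean inequality. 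I would then apply the two H\"older steps (\eqref{eq:Holder-twice} and \eqref{eq:constantC}) verbatim. These bound $\int_E\widehat{\psi}_e(f^r)^p\,d\eta$ by
\[
r^p\Big(\int_E\widehat{\psi}_e(f)^{pq}\,d\eta\Big)^{1/q}\Big(\int_E|I_e|^{p-1}\sum_{i\in I_e}\tfrac{s_{e,i}^{(r-1)q'p}+s_{e,i-1}^{(r-1)q'p}}{2}\,d\eta\Big)^{1/q'}.
\]
The sum telescopes as in \eqref{eq:I_ecup0}. Since $s_{e,0}=0$ contributes nothing, I expect the bound $\frac{\min\{2,|I_e|\}}{2}\sum_{i\ge1}s_{e,i}^{(r-1)q'p}$, or something even better. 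Each $s_{e,i}$ equals $|f(x)|$ for a distinct $x\in e_\psi\subset e_J$, so the sum is at most $\sum_{x\in e_J}|f(x)|^{(r-1)q'p}$. The degree-related identity \eqref{eq:degree-related} then converts this into $\int_J\deg\,|f|^{(r-1)pq'}\,d\mu$.

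Dividing by $\widetilde{\llbracket f^r\rrbracket}_{(1-\frac1r)pq'}$ produces the pointwise inequality
\[
\Psi_p(f^r)/\widetilde{\llbracket f^r\rrbracket}_{(1-\frac1r)pq'}\le rK_{p,q}\,\Psi_{pq}(f)/\widetilde{\llbracket f\rrbracket}_{(r-1)pq'}.
\]
The Mazur-map min-max argument of Step 1.2, which relies on the admissibility of $\mathcal{Y}$, then finishes the proof.

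The main obstacle is the counting constant. Here $|I_e|$ can be as large as $|e_J|$ rather than $|e_J|-1$, because the breakpoint at $0$ is fixed while the others come from $|f(x)|$. To still get $C_{JE}=\sup_e(|e_J|-1)$ in $K_{p,q}$, I would first try to show that the smallest breakpoint $s_{e,1}$ either coincides with the zero level or makes a vanishing contribution. Failing that, I would use that points with $f(x)=\pm s$ for the same $s$ share a single breakpoint. If neither works, I would accept the slightly weaker constant, or assume $\psi_e$ vanishes on pairs covering only one point of $e_J$.
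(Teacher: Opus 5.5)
Your outline follows the paper's route exactly. The paper's proof of this proposition reruns the chain of Proposition~\ref{pro:phi-rKcpq} with set-pair level sets and writes down the same constant. The obstacle you flag is real, and the paper does not resolve it either. The last steps of that chain use $|I_e|\le|e_J|-1$, because the $m+1$ breakpoints there are distinct values of $f$ on $e_J$. For set pairs $s_{e,0}=0$ is forced, so only $|I_e|\le|e_J|$ holds.

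As it stands, your proposal therefore proves the inequality only with $C_{JE}$ replaced by $\sup_e|e_J|$, not with the stated $K_{p,q}$. Your first two remedies fail. Take the bipartiteness pair function $\psi_{xy}$ of Section~\ref{subsec:spectral-bound}, with $\widehat\psi_{xy}(f)=|f(x)+f(y)|$, and $f(x)=1$, $f(y)=2$:
\begin{itemize}
\item the breakpoints are $s_1=1$ and $s_2=2$, with $\psi_{xy}=2$ on $(0,1]$ and $\psi_{xy}=1$ on $(1,2]$;
\item so $|I_e|=2>C_{JE}=1$, the first interval contributes $2\ne0$, and there are no ties.
\end{itemize}
Already for $p=1$ your telescoped term is $s_1^{Q}+\frac12 s_2^{Q}$, where $Q:=(r-1)pq'$. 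This exceeds the $\frac12(s_1^Q+s_2^Q)$ that $K_{1,q}=2^{-1/q'}$ requires. Your third remedy changes the hypotheses and excludes precisely this $\psi_{xy}$.

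No bookkeeping can save the statement in full generality. If $C_{JE}=0$ then $K_{p,q}=0$. Now take $J=E=\{x\}$ with counting measures, $\deg\equiv1$, and $\psi_e(A,A')=1$ iff $x\in A\cup A'$. Then $\widehat\psi_e(f)=|f(x)|$, both global constants equal $1$, and the claim reads $1\le 0$. This cannot happen in the $\Phi$ case, because $\varphi_e(J)=0$ together with concentration forces $\varphi_e\equiv 0$ when $|e_J|=1$.

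For $C_{JE}\ge1$ the statement is true, but it needs arguments other than the interval-wise H\"older chain.
\begin{itemize}
\item \emph{Case $C_{JE}\ge2$.} Here $K_{p,q}^{pq'}=C_{JE}^{p-1}\ge1$. The crude bound $\widehat\psi_e(f^r)\le r\|f\|_{\psi_e}^{r-1}\widehat\psi_e(f)$ from Case 2 of the proof of Theorem~\ref{thm:critical-inequality} already gives constant $1$. Combine it with $\|f\|_{\psi_e}^Q\le\sum_{x\in e_J}|f(x)|^Q$ (by concentration), H\"older, and \eqref{eq:degree-related}.
\item \emph{Case $C_{JE}=1$, setup.} If $|e_J|=1$, pad $e_J$ with a dummy point where $f=0$. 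Let $s_1\le s_2$ be the absolute values of $f$ on $e_J$. Let $v_1,v_2\ge0$ be the values of $\psi_e(f\ge s,f\le -s)$ on $(0,s_1]$ and $(s_1,s_2]$. Then $\widehat\psi_e(f)=s_1v_1+(s_2-s_1)v_2$ and $\widehat\psi_e(f^r)=s_1^rv_1+(s_2^r-s_1^r)v_2$.
\item \emph{Case $C_{JE}=1$, per-$e$ bound.} Set $M:=\big(\frac{s_1^r+s_2^r}{2}\big)^{1/r}$, so $M\ge s_1$. Compare coefficients: the $v_1$-coefficient follows from $M\ge s_1$, and the $v_2$-coefficient is your mean-value bound. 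This gives $\widehat\psi_e(f^r)\le rM^{r-1}\widehat\psi_e(f)$.
\item \emph{Case $C_{JE}=1$, conclusion.} The hypothesis $(1-\frac1r)pq'\ge1$ gives $Q\ge r$. The power-mean inequality then gives $M^Q\le\frac12\sum_{x\in e_J}|f(x)|^Q$. H\"older in $e$ yields
\[
\int_E\widehat\psi_e(f^r)^p\,d\eta\le r^p\Big(\int_E\widehat\psi_e(f)^{pq}\,d\eta\Big)^{\frac1q}\Big(\frac12\int_J\deg(x)|f(x)|^{Q}\,d\mu(x)\Big)^{\frac1{q'}},
\]
which is exactly $K_{p,q}^p=2^{-1/q'}$. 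The Mazur min-max step then concludes.
\end{itemize}
A minor point: for $1\le r<2$ your mean-value bound follows from concavity of $t^{r-1}$ and Jensen, not from convexity.
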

\begin{proof}The proof is very similar to that of Proposition \ref{pro:phi-rKcpq}. 
For any $e$, $\{\psi_e(f\ge s,f\le -s):s\ge0\}$ is a finite set, particularly, there exist $s_0:=0< s_1< \cdots< s_m< +\infty:=s_{m+1}$ such that $\psi_e(f\ge s,f\le -s)$ is constant on each interval $(s_{i-1},s_i)$, $i=1,\cdots,m+1$. Clearly, $\psi_e(f\ge s,f\le -s)=0$ when $s\ge s_m$. Let $I_e$ be the collection of the labels of   these $s_1,\cdots,s_m$, i.e., $I_e=\{1,\cdots,m\}$. 
Then,  
\begin{align*}
&\int_E\big(\widehat{\psi}_e(f^r)\big)^pd\eta(e)
\\=~& r^p\int_E\Big(\int_0^{\|f\|_{\psi_e}} s^{r-1}\psi_e(f\ge s,f\le -s)ds\Big)^pd\eta(e)  \\=~& \int_E\Big(\sum_{i\in I_e} \int_{s_{e,i-1}}^{s_{e,i}}rs^{r-1}\psi_e(f\ge s,f\le -s)ds\Big)^pd\eta(e)
\\=~&\int_E\Big(\sum_{i\in I_e} (s_{e,i}^r-s_{e,i-1}^r)\psi_e(f\ge s_{e,i-1},f\le -s_{e,i-1})\Big)^pd\eta(e)
\\\le~&\int_E\left(\sum_{i\in I_e}r(s_{e,i}-s_{e,i-1})\big(\frac{s_{e,i}^r+s_{e,i-1}^r}{2}\big)^{\frac{1}{r'}}\psi_e(f\ge s_{e,i-1},f\le -s_{e,i-1})\right)^pd\eta(e)
\\ \le~&
r^p\left(\int_E\Big(\sum_{i\in I_e}(s_{e,i}-s_{e,i-1})^q\psi_e^q(f\ge s_{e,i-1},f\le -s_{e,i-1})\Big)^pd\eta(e)\right)^{\frac1q}\times
\\&~~~~~~~~~~~~~~\times
\left(\int_E\Big(\sum_{i\in I_e} \big(\frac{s_{e,i}^r+s_{e,i-1}^r}{2}\big)^{\frac{q'}{r'}}\Big)^pd\eta(e)\right)^{\frac{1}{q'}}
\\ \le~&
r^p \left(\int_E\Big(\sum_{i\in I_e}(s_{e,i}-s_{e,i-1})\psi_e(f\ge s_{e,i-1},f\le -s_{e,i-1})\Big)^{qp}d\eta(e)\right)^{\frac1q}\times
\\&~~~~~~~~~~~~~~\times
\left(\int_E |I_e|^{p-1}\sum_{i\in I_e} \big(\frac{s_{e,i}^r+s_{e,i-1}^r}{2}\big)^{\frac{q'}{r'}p}d\eta(e)\right)^{\frac{1}{q'}}
\\ \le~&
r^p \left(\int_E\Big(\widehat{\psi}_e(f)\Big)^{qp}d\eta(e)\right)^{\frac1q}
\left(\int_E|I_e|^{p-1}\sum_{i\in I_e} \frac{s_{e,i}^{(r-1)q'p}+s_{e,i-1}^{(r-1)q'p}}{2}de\right)^{\frac{1}{q'}}
\\ \le ~&
r^p K_{p,q}\left(\int_E\Big(\widehat{\psi}_e(f)\Big)^{pq}d\eta(e)\right)^{\frac1q}
\left(\int_X  \deg(x)|f(x)|^{(r-1)q'p}dx\right)^{\frac{1}{q'}}
\end{align*}
The remainder of the proof is the same as that of Proposition \ref{pro:phi-rKcpq} and thus we omit it.
\end{proof}

\begin{prop}\label{pro:psi-cpq21-r}
Suppose that $K_{p,r}':=2^{\frac1r-1}(\sup\limits_{e\in E}\|\psi_e\|_\infty^{r-1}|I_e|^{r-\frac1p})^{-1}$. Then, for any degree and its related norm $\widetilde{\llbracket \cdot\rrbracket}$, we have
$$c(\Psi_p,\widetilde{\llbracket \cdot\rrbracket}_{q})\ge  K_{p,r}'\big(c(\Psi_{pr},\widetilde{\llbracket \cdot\rrbracket}_{qr})\big)^r.$$ 
\end{prop}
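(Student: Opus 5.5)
The plan is to transcribe the proof of Proposition \ref{pro:phi-cpq21-r} into the disjoint-pair setting, using the same finite piecewise-constant structure already introduced for the upper bound. Fix $f\in\Bd$ and $e\in E$. As in the preceding proposition, take breakpoints $0=s_{e,0}<s_{e,1}<\cdots<s_{e,m}$ such that $s\mapsto \psi_e(f\ge s,f\le -s)$ is constant on each $(s_{e,i-1},s_{e,i})$ and vanishes for $s\ge s_{e,m}$. Put $I_e=\{1,\dots,m\}$. Substituting $t=s^r$ in the definition of the Choquet extension (the computation of Case 2 in the proof of Theorem \ref{thm:critical-inequality}) gives
\[
\widehat{\psi}_e(f^r)=\sum_{i\in I_e}(s_{e,i}^r-s_{e,i-1}^r)\,\psi_e(f\ge s_{e,i-1},f\le -s_{e,i-1}),
\]
and likewise $\widehat{\psi}_e(f)=\sum_{i\in I_e}(s_{e,i}-s_{e,i-1})\psi_e(\cdots)$.

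The lower bound then comes from a chain of elementary inequalities, applied in this order.
\begin{enumerate}
\item Since all terms are nonnegative, $(\sum_i a_i)^p\ge \sum_i a_i^p$.
\item The inequality $s_{e,i}^r-s_{e,i-1}^r\ge (s_{e,i}-s_{e,i-1})\big(\tfrac{s_{e,i}^r+s_{e,i-1}^r}{2}\big)^{1-1/r}$ from \cite[Lemma A.1]{Zhang25}.
\item The bound $\tfrac{s_{e,i}^r+s_{e,i-1}^r}{2}\ge \big(\tfrac{s_{e,i}-s_{e,i-1}}{2}\big)^r$, which is easier here than in the $\varphi$ case because all $s_{e,i}\ge 0$.
\item The estimate $\psi_e(\cdots)^p\ge \psi_e(\cdots)^{rp}/\|\psi_e\|_\infty^{(r-1)p}$.
\item The power-mean inequality $\sum_{i\in I_e}a_i^{rp}\ge |I_e|^{1-rp}(\sum_i a_i)^{rp}$.
\end{enumerate}
Together these yield
\[
\int_E\widehat{\psi}_e(f^r)^p\,d\eta\ \ge\ \inf_e\frac{2^{(1-r)p}}{\|\psi_e\|_\infty^{(r-1)p}|I_e|^{rp-1}}\int_E\widehat{\psi}_e(f)^{rp}\,d\eta.
\]

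For the denominator, the Mazur map gives directly $\widetilde{\llbracket f^r\rrbracket}_q^q=\widetilde{\llbracket f\rrbracket}_{qr}^{qr}$. Taking $p$-th roots in the numerator and combining, the Rayleigh-type quotient satisfies $\Psi_p(f^r)/\widetilde{\llbracket f^r\rrbracket}_q\ge K'\,\big(\Psi_{pr}(f)/\widetilde{\llbracket f\rrbracket}_{qr}\big)^r$. Here $K'$ collects the constants above, and the exponents of the two norms match after the $p$-th and $q$-th roots. The final step is the min-max argument of Step 1.2: admissibility of $\mathcal{Y}$ makes $Y\mapsto Y^r$ a bijection of $\mathcal{Y}$. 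Because $x\mapsto x^r$ is increasing, one can take $\sup_{f\in Y}$ and then $\inf_Y$ on both sides of the pointwise inequality, which gives $c(\Psi_p,\cdot)\ge K' c(\Psi_{pr},\cdot)^r$.

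I expect the main obstacle to be the constant bookkeeping. The stated $K'_{p,r}$ carries $2^{1/r-1}$ rather than the $2^{1-r}$ one gets naively, so the exponent of $2$ must be tracked carefully, possibly by not taking $p$-th roots in the same way as in Proposition \ref{pro:phi-cpq21-r}. The dependence of $|I_e|$ on $f$ should be handled by taking the supremum over $e$ (and implicitly over $f$), exactly as in the statement.
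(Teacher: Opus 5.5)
\textbf{The gap: your chain proves a weaker constant.} Your chain matches the paper's proof step for step except at your step 3, and that step is where the constant you could not match is lost. The other steps are the same: superadditivity of $x\mapsto x^p$, \cite[Lemma A.1]{Zhang25}, $\psi_e^p\ge\psi_e^{rp}/\|\psi_e\|_\infty^{(r-1)p}$, the power-mean bound, the identity $\widetilde{\llbracket f^r\rrbracket}_q=\widetilde{\llbracket f\rrbracket}_{qr}^{\,r}$, and the min-max transfer via $Y\mapsto Y^r$.

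With $\frac{s_{e,i}^r+s_{e,i-1}^r}{2}\ge\big(\frac{s_{e,i}-s_{e,i-1}}{2}\big)^r$ you get $\big(\frac{s_{e,i}^r+s_{e,i-1}^r}{2}\big)^{(1-\frac1r)p}\ge 2^{(1-r)p}(s_{e,i}-s_{e,i-1})^{(r-1)p}$. Your final constant is therefore $2^{1-r}/\sup_e(\cdots)$, which is just the constant of Proposition \ref{pro:phi-cpq21-r}. Since $r+\frac1r\ge 2$, we have $2^{1-r}\le 2^{\frac1r-1}$, with strict inequality for $r>1$. So your argument proves only a strictly weaker inequality than the one stated, not the one with $K'_{p,r}$.

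Handling the $p$-th or $q$-th roots differently cannot recover the missing factor $2^{(r-1)^2/r}$. The denominator identity is exact, and the loss happens pointwise inside the sum over $i$, before any integration or root is taken.

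\textbf{The fix.} You need to actually exploit the nonnegativity you noticed. Because $0\le s_{e,i-1}<s_{e,i}$,
\[
s_{e,i}^r+s_{e,i-1}^r\ \ge\ s_{e,i}^r\ \ge\ (s_{e,i}-s_{e,i-1})^r,\qquad\text{i.e.}\qquad \frac{s_{e,i}^r+s_{e,i-1}^r}{2}\ \ge\ \Big(\frac{s_{e,i}-s_{e,i-1}}{2^{1/r}}\Big)^r .
\]
This is better than your step 3 by a factor $2^{r-1}$. In the $\varphi$ case the levels may have opposite signs, so only the midpoint bound is available there.

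Raising this to the power $(1-\frac1r)p$ gives $2^{(\frac1r-1)p}(s_{e,i}-s_{e,i-1})^{(r-1)p}$. The integrated estimate then becomes
\[
\int_E\widehat{\psi}_e(f^r)^p\,d\eta(e)\ \ge\ \frac{2^{(\frac1r-1)p}}{\sup_e\|\psi_e\|_\infty^{(r-1)p}|I_e|^{rp-1}}\int_E\widehat{\psi}_e(f)^{rp}\,d\eta(e).
\]
Taking the $p$-th root yields exactly $K'_{p,r}$. The rest of your argument then goes through unchanged, and with this replacement it coincides with the paper's proof.
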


\begin{proof}
The proof is similar to that of Proposition \ref{pro:phi-cpq21-r}:
\begin{align*}
&\int_E\big(\widehat{\psi}_e(f^r)\big)^pd\eta(e)
\\ =~
&\int_E\Big(\sum_{i\in I_e} (s_{e,i}^r-s_{e,i-1}^r)\psi_e(f\ge s_{e,i-1},f\le -s_{e,i-1})\Big)^pd\eta(e)
\\ \ge~& \int_E \sum_{i\in I_e} (s_{e,i}^r-s_{e,i-1}^r)^p\psi_e(f\ge s_{e,i-1},f\le -s_{e,i-1}) ^pd\eta(e)
\\ \ge~&\int_E\sum_{i\in I_e} (s_{e,i}-s_{e,i-1})^p\left(\frac{s_{e,i}^r+s_{e,i-1}^r}{2}\right)^{(1-\frac1r)p}\psi_e(f\ge s_{e,i-1},f\le -s_{e,i-1})^pd\eta(e)
\\ \ge~&\int_E\frac{1}{ \|\psi_e\|_\infty^{rp-p}}\sum_{i\in I_e} (s_{e,i}-s_{e,i-1})^p\left(\frac{s_{e,i}-s_{e,i-1}}{2^{1/r}}\right)^{r(1-\frac1r)p}\psi_e(f\ge s_{e,i-1},f\le -s_{e,i-1})^{rp}d\eta(e)
\\ \ge~&\int_E\frac{2^{(\frac1r-1)p}}{ \|\psi_e\|_\infty^{rp-p}}\sum_{i\in I_e} (s_{e,i}-s_{e,i-1})^{rp}\psi_e(f\ge s_{e,i-1},f\le -s_{e,i-1})^{rp}d\eta(e)
\\ \ge~&\int_E \frac{2^{(\frac1r-1)p}}{ \|\psi_e\|_\infty^{rp-p}}|I_e|^{1-rp}\Big(\sum_{i\in I_e} (s_{e,i}-s_{e,i-1})\psi_e(f\ge s_{e,i-1},f\le -s_{e,i-1})\Big)^{rp}d\eta(e)
\\ \ge~&\frac{2^{(\frac1r-1)p}}{\sup_e\|\psi_e\|_\infty^{rp-p}|I_e|^{rp-1}}\int_E \big(\widehat{\psi}_e(f)\big)^{rp}d\eta(e)
\end{align*}
Therefore, in a manner similar to Proposition \ref{pro:psi-cpq21-r}, we have $$c(\Psi_p,\widetilde{\llbracket \cdot\rrbracket}_{q})\ge  \frac{2^{\frac1r-1}}{\sup\limits_e\|\psi_e\|_\infty^{r-1}|I_e|^{r-\frac1p}}\big(c(\Psi_{pr},\widetilde{\llbracket \cdot\rrbracket}_{qr})\big)^r$$
which completes the proof.
\end{proof}

\subsection*{Acknowledgements}
Dong Zhang is supported by grants from  the  National Natural Science Foundation of China (No.\ 12401443).

{\footnotesize
\bibliographystyle{plain}
\bibliography{extension} 
}
 \end{document}